\newtheorem{lemma}{Lemma}
\newtheorem{theorem}{Theorem}
\newtheorem{definition}{Definition}
\newtheorem{proposition}{Proposition}
\newtheorem{corollary}{Corollary}
\definecolor{darkgreen}{rgb}{0,0.6,0}
\newcommand{\kibitz}[2]{\ifnum\Comments=1{\color{#1}{#2}}\fi}
\newcommand{\word}[1]{\ensuremath{\bar{#1}}}
\newcommand{\bracelet}[1]{\ensuremath{\hat{\mathbf{#1}}}}
\newcommand{\necklace}[1]{\ensuremath{\tilde{\mathbf{#1}}}}
\newcommand{\ProofSketch}[1]{\ifnum\Sketches=1{ \noindent {\bf Proof Sketch.}#1 \hfill \qedsymbol}\fi}
\title{Ranking Bracelets in Polynomial Time \footnote{A preliminary conference version of this work appeared in the Proceedings of the 32nd Annual Symposium on Combinatorial Pattern Matching, CPM 2021 \cite{Adamson2021}.}}
\author{Duncan Adamson, Argyrios Deligkas, Vladimir V. Gusev, Igor Potapov}
\date{\today}
\begin{document}

\maketitle

\begin{abstract}
    The main result of the paper is the first  polynomial-time algorithm for ranking bracelets. The time-complexity of the algorithm is $O(k^2\cdot n^4)$, where $k$ is the size of the alphabet and $n$ is the length of the considered bracelets.
    The key part of the algorithm is to compute the rank 
    of any word with respect to the set of bracelets by finding three other ranks: the rank over all necklaces, the rank over palindromic necklaces, and the rank over enclosing apalindromic necklaces. 
    The last two concepts are introduced in this paper.
    These ranks are key components to our algorithm in order to decompose the problem into parts.
    Additionally, this ranking procedure is used to build a polynomial-time unranking algorithm.
\end{abstract}

\section{Introduction}
Counting, ordering, and generating basic discrete structures such as strings, permutations, set-partitions, etc. are 
fundamental tasks in computer science. A variety of such algorithms are assembled in the fourth volume of the prominent
series ``The art of computer programming'' by D.~Knuth~\cite{Knuth4a}. Nevertheless, this research direction remains 
very active~\cite{Mutze20}.

If the structures under consideration are linearly ordered, e.g. a set of words under the dictionary (lexicographic) order, then a unique integer can be assigned to every structure.
The \emph{rank} (or index) of a structure is the number of structures that are smaller than it.
The {\em ranking} problem asks to compute the rank of a given structure, while the {\em unranking} problem corresponds to its reverse: compute the structure of a given rank. Ranking has been studied for various objects including partitions~\cite{RankPartition}, permutations~\cite{RankPerm1,RankPerm2}, combinations~\cite{RankComb}, etc.
Unranking has similarly been studied for objects such as permutations~\cite{RankPerm2} and trees~\cite{Gupta1983,Pallo1986}.

Both ranking and unranking are straightforward for the set of all words over a finite alphabet (assuming the standard lexicographic order), but 
they immediately cease to be so, as soon as additional symmetry is introduced.
One of such examples is a class of necklaces \cite{Graham1994}.
A \emph{necklace}, also known as a \emph{cyclic word}, is an equivalence class of all words under the cyclic rotation operation, also known as a cyclic shift.
Necklaces are classical combinatorial objects and they remain an object of study in other contexts such as total search problems~\cite{SplitNeck19} or circular splicing systems~\cite{CircularSplicing}.

The \emph{rank} of a word $w$ for a given set $S$ and its ordering is the number of words in $S$ that are smaller than $w$.
Often the set is a class of words, for instance all words of a given length over some alphabet.
The first class of cyclic words to be ranked were \emph{Lyndon words} - fixed length aperiodic cyclic words - by Kociumaka et. al. \cite{Kociumaka2014} who provided an $O(n^3)$ time algorithm.
An algorithm for ranking necklaces - fixed length cyclic words - was given by Kopparty et. al. \cite{Kopparty2016}, without tight bounds on the complexity.
A quadratic algorithm for ranking necklaces was provided by Sawada et al. \cite{Sawada2017}.

\begin{figure}
    \centering
    \begin{tabular}{l l l l l l l l l l}
        1. & aaaaaaaa & 7. & aaaababb & 13. & aaabbabb & 19. & aababbbb & 25. &ababbabb\\
        2. & aaaaaaab & 8. & aaaabbbb & 14. & aaabbbbb & 20. & aabbaabb & 26. &ababbbbb\\
        3. & aaaaaabb & 9. & aaabaaab & 15. & aabaabab & 21. & aabbabbb & 27, &abbabbbb\\
        4. & aaaaabab & 10. & aaabaabb & 16. & aabaabbb & 22. & aabbbbbb & 28. &abbbabbb\\
        5. & aaaaabbb & 11. & aaababab & 17. & aabababb & 23. & abababab & 29. & abbbbbbb\\
        6. & aaaabaab & 12. & aaabbabb & 18. & aababbab & 24. & abababbb & 30. & bbbbbbbb
    \end{tabular}
    \caption{List of all bracelets of length $8$ over the alphabet $\{a,b\}$.}
    \label{fig:rank_example}
\end{figure}

This paper answers the open problem of ranking \emph{bracelets}, posed by Sawada and Williams \cite{Sawada2017}.
Bracelets are necklaces that are minimal under both \emph{cyclic shifts} and \emph{reflections}.
Figure \ref{fig:rank_example} provides an example of the ranks of length 8 bracelets over a binary alphabet.
Bracelets have been studied extensively, with results for counting and generation in both the normal and fixed content cases \cite{Karim2013,Sawada2001}.


This paper presents the first algorithm for ranking bracelets
of length $n$ over an alphabet of size $k$
in polynomial time, with a time complexity of $O(k^2 \cdot n^{4})$.
This algorithm is further used to unrank bracelets in $O(n^5 \cdot k^2 \cdot \log(k))$. time.
These polynomial time algorithms improve upon the exponential time brute-force algorithm.

We briefly mention our additional interest to this problem.
Combinatorial necklaces and bracelets provide discrete representation of periodic motives in crystals.
The problems on finding diverse and representative samples of languages of  necklaces and bracelets can speed up space exploration in crystal structures \cite{Collins17}.
The essential component for building 
representative sample require efficient 
procedures for ranking bracelets.

\section{Preliminaries}
\label{sec:prelims}

\subsection{Definitions and Notation}
\label{subsec:definitions_notation}

Let $\Sigma$ be a finite alphabet.
We denote by $\Sigma^*$ the set of all words over $\Sigma$ and by $\Sigma^n$ the set of all words of length $n$.
For the remainder of this paper, let $k = |\Sigma|$.
The notation $\word{w}$ is used to clearly denote that the variable $w$ is a word.
The length of a word $\word{u} \in \Sigma^*$ is denoted $|\word{u}|$.
We use $\word{u}_i$, for any $i \in 1\hdots|\word{u}|$ to denote the $i^{th}$ symbol of $\word{u}$.
The \emph{reversal} operation on a word $\word{w} = \word{w}_1 \word{w}_2 \hdots \word{w}_n$, denoted  by $\word{w}^{R}$, returns the word $\word{w}_n \hdots \word{w}_2 \word{w}_1$.

In the present paper we assume that $\Sigma$ is linearly ordered.
Let $[n]$ return the ordered set of integers from $1$ to $n$ inclusive.
Given 2 words $\word{u},\word{v} \in \Sigma^*$ where $|\word{u}| \leq |\word{v}|$, $\word{u} = \word{v}$ if and only if $|\word{u}| = |\word{v}|$ and $\word{u}_i = \word{v}_i$ for every $i \in [|\word{u}|]$.
A word $\word{u}$ is \emph{lexicographically smaller} than $\word{v}$ if there exists an $i \in [|\word{u}|]$ such that $\word{u}_1 \word{u}_2 \hdots \word{u}_{i-1} = \word{v}_1 \word{v}_2 \hdots \word{v}_{i-1}$ and $\word{u}_i < \word{v}_i$.
For example, given the alphabet $\Sigma = \{a,b\}$ where $a < b$, the word $aaaba$ is smaller than $aabaa$ as the first 2 symbols are the same and $a$ is smaller than $b$.
For a given set of words $\mathbf{S}$, the rank of $\word{v}$ with respect to $\mathbf{S}$ is the number of words in $\mathbf{S}$ that are smaller than $\word{v}$.

The {\em rotation} of a word $\word{w}=\word{w}_1 \word{w}_2 \hdots \word{w}_n$ by $r \in [n-1]$ returns the word $\word{w}_{r + 1} \hdots \word{w}_n \word{w}_1 \hdots \word{w}_r$, and is denoted by  $\langle \word{w} \rangle_r$, i.e. $\langle \word{w}_1 \word{w}_2 \hdots \word{w}_n \rangle_r = \word{w}_{r + 1} \hdots \word{w}_n \word{w}_1 \hdots \word{w}_r$.
Under the rotation operation, $\word{u}$ is equivalent to $\word{v}$ if $\word{v} = \langle \word{w} \rangle_r$ for some $r$.
The $t^{th}$ power of a word $\word{w} = \word{w}_1 \hdots \word{w}_n$, denoted $\word{w}^t$, is equal to $\word{w}$ repeated $t$ times.
For example $(aab)^3 = aabaabaab$.
A word $\word{w}$ is \emph{periodic} if there is some word $\word{u}$ and integer $t \geq 2$ such that $\word{u}^t = \word{w}$.
Equivalently, word $\word{w}$ is \emph{periodic} if there exists some rotation $0 < r < |\word{w}|$ where $\word{w} = \langle \word{w}\rangle_r$.
A word is \emph{aperiodic} if it is not periodic.
The \emph{period} of a word $\word{w}$ is the length of the smallest word $\word{u}$ for which there exists some value $t$ for which $\word{w} = \word{u}^t$.

A \emph{cyclic word}, also called a \emph{necklace}, is the equivalence class of words under the rotation operation.
For notation, a word $\word{w}$ is written as $\necklace{w}$ when treated as a necklace.
Given a necklace $\necklace{w}$, the \emph{necklace representative} is the lexicographically smallest element of the set of words in the equivalence class $\necklace{w}$.
The necklace representative of $\necklace{w}$ is denoted $\langle \necklace{w} \rangle$, and the $r^{th}$ shift of the necklace representative is denoted $\langle\necklace{w}\rangle_{r}$.
The reversal operation on a necklace $\necklace{w}$ returns the necklace $\necklace{w}^R$ containing the reversal of every word $\word{u} \in \necklace{w}$, i.e. $\necklace{w}^R = \{\word{u}^R: \word{u} \in \necklace{w}\}$.
Given a word $\word{w}$, $\langle \word{w} \rangle$ will denote the necklace representative of the necklace containing $\word{w}$, i.e. the representative of $\necklace{u}$ where $\word{w} \in \necklace{u}$.

A \emph{subword} of the cyclic word $\word{w}$, denoted $\word{w}_{[i,j]}$ is the word $\word{u}$ of length $|\word{w}| + j - i - 1 \bmod |\word{w}||)$ such that $\word{u}_{a} = \word{w}_{i - 1 + a \bmod |\word{w}|}$.
For notation $\word{u} \sqsubseteq \word{w}$ denotes that $\word{u}$ is a subword of $\word{w}$.
Further, $\word{u} \sqsubseteq_{i} \word{w}$ denotes that $\word{u}$ is a subword of $\word{w}$ of length $i$.
If $\word{w} = \word{u} \word{v}$, then $\word{u}$ is a prefix and $\word{v}$ is a suffix.
A prefix or suffix of a word $\word{u}$ is \emph{proper} if its length is smaller than $|\word{u}|$.
For notation, the tuple $\mathbf{S}(\word{v},\ell)$ is defined as the set of all subwords of $\word{v}$ of length $\ell$.
Formally let $\mathbf{S}(\word{v},\ell) = \{\word{s} \sqsubseteq \word{v}: |\word{s}| = \ell\}$.
Further, $\mathbf{S}(\word{v},\ell)$ is assumed to be in lexicographic order, i.e. $\mathbf{S}(\word{v},\ell)_1 \geq \mathbf{S}(\word{v},\ell)_2 \geq \hdots \mathbf{S}(\word{v},\ell)_{|\word{v}|}$. 

A \emph{bracelet} is the equivalence class of words under the combination of the rotation and the reversal operations.
In this way a bracelet can be thought of as the union of two necklace classes $\necklace{w}$ and $\necklace{w}^R$, hence $\bracelet{w} = \necklace{w}\cup \necklace{w}^R$.
Given a bracelet $\bracelet{w}$, the \emph{bracelet representative} of $\bracelet{w}$, denoted by $[ \bracelet{w} ]$, is the lexicographically smallest word $\word{u} \in \bracelet{w}$. 

A necklace $\necklace{w}$ is \emph{palindromic} if $\necklace{w} = \necklace{w}^R$. 
This means that the reflection of every word in $\necklace{w}$ is in $\necklace{w}^R$, i.e. given $\word{u} \in \necklace{w}, \word{u}^R \in \necklace{w}^R$.
Note that for any word $\word{w} \in \necklace{a}$, where $\necklace{a}$ is a palindromic necklace, either $\word{w} = \word{w}^R$, or there exists some rotation $i$ for which $\langle \word{w} \rangle_i = \word{w}^R$.

Let $\necklace{u}$ and $\necklace{v}$ be a pair of necklaces belonging to the same bracelet class.
For simplicity assume that $\langle \necklace{u} \rangle < \langle \necklace{v} \rangle$. 
The bracelet $\bracelet{u}$  \emph{encloses} a word $\word{w}$ if $\langle \necklace{u} \rangle < \word{w} < \langle \necklace{v} \rangle$.
An example of this is the bracelet $\bracelet{u} = aabc$ which encloses the word $\word{w} = aaca$ as $aabc < aaca < aacb$.
The set of all bracelets which enclose $\word{w}$ are referred to as the set of bracelets \emph{enclosing} $\word{w}$.

\subsection{Bounding Subwords}
\label{subsec:bounding_subwords}
For both the palindromic and enclosing cases the number of necklaces smaller than $\word{v} \in \Sigma^n$ is computed by iteratively counting the number of words of length $n$ for which no subword is smaller than $\word{v}$. The set of such words, denoted by $\mathbf{S}_n$, 
will be analysed iteratively as well, since it can have an exponential size.
In order to relate $\mathbf{S}_i$ to $\mathbf{S}_{i+1}$, we will split $\mathbf{S}_i$ into parts 
using the positions of length $i$ subwords of $\word{v}$ with respect to the lexicographic order on $\mathbf{S}_i$.
Informally, every $\word{w} \in \mathbf{S}_i$ can be associated with the unique lower bound from $\mathbf{S}(\word{v}, i)$, which will be used to identify the parts leading us to the following definition.


\begin{definition}
Let $\word{w}, \word{v} \in \Sigma^*$ where $|\word{w}| \leq |\word{v}|$.
The word $\word{w}$ is  \emph{bounded} (resp. strictly bounded) by $\word{s} \sqsubseteq_{|\word{w}|} \word{v}$, if $\word{s} \leq \word{w}$ (resp. $\word{s} < \word{w}$) and there is no $\word{u} \sqsubseteq_{|\word{w}|} \word{v}$ such that $\word{s} < \word{u} \leq \word{w}$.
\label{def:bounding}
\end{definition}

\noindent
The aforementioned parts $\mathbf{S}_i(\word{s})$ contain all words $\word{w} \in \mathbf{S}_i$ such that $\word{s} \sqsubseteq_{|\word{w}|} \word{v}$. 
The key observation is that words of the form $x\word{w}$ for all $\word{w} \in \mathbf{S}_i$ and some fixed symbol $x \in \Sigma$ belong to the
same set $\mathbf{S}_{i+1}(\word{s}')$, where $\word{s}' \sqsubseteq \word{v}$.
The same holds true for words of the form $\word{w}x$.
Thus, we can compute the corresponding $\word{s}'$ for all pairs of $\word{s}$ and $x$ in order to derive sizes of $\mathbf{S}_{i+1}(\word{s}')$.
Moreover, this relation between $\word{s}$, $x$ and $\word{s}'$ is independent of $i$ allowing us to store this information in two $n^2 \times k$ arrays $XW$ and $WX$. Both arrays will be indexed by the words $\word{s} \sqsubseteq \word{v}$ and characters $x \in \Sigma$.
Given a word $\word{w}$ strictly bounded by $\word{s}$, $XW[\word{s},x]$ will contain the word $\word{s}' \sqsubseteq_{|\word{s}| + 1} \word{v}$ strictly bounding $x \word{w}$.
Similarly, $WX[\word{s},x]$ will contain the word $\word{s}' \sqsubseteq_{|\word{s}| + 1} \word{v}$ strictly bounding $\word{w} x$.
By precomputing these arrays, the cost of determining these words can be avoided during the ranking process.
In order to compute these arrays, the following technical Lemmas are needed.

\begin{lemma}
Let $\word{w},\word{v}  \in \Sigma^*$, $|\word{w}| < |\word{v}|$,  let $x \in \Sigma$ and let $\word{s} \sqsubseteq_{|\word{w}|} \word{v}$ be the subword of $\word{v}$ that bounds $\word{w}$.
The word $\word{s}' \sqsubseteq \word{v}$ bounds $x \word{w}$ if and only if $\word{s}'$ bounds $x \word{s}$.
\label{lem:bounding_same_xw}
\end{lemma}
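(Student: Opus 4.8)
The plan is to use that the subword of $\word{v}$ bounding a target $\word{z}$ is exactly the lexicographically largest subword of $\word{v}$ of length $|\word{z}|$ that does not exceed $\word{z}$, and that this value is uniquely determined. Since $\word{s}$ bounds $\word{w}$ we have $\word{s}\le\word{w}$, so $x\word{s}\le x\word{w}$; hence every subword of length $|\word{w}|+1$ that is $\le x\word{s}$ is also $\le x\word{w}$, and the largest subword not exceeding $x\word{s}$ is at most the largest not exceeding $x\word{w}$. Writing $\word{s}'$ for the subword bounding $x\word{w}$, it therefore suffices to prove that $\word{s}'\le x\word{s}$: this makes $\word{s}'$ simultaneously the largest subword below both $x\word{w}$ and $x\word{s}$, which, by uniqueness of the bounding subword, is exactly the claimed equivalence (the converse reading being symmetric).

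To prove $\word{s}'\le x\word{s}$ I would argue by contradiction and assume $\word{s}'>x\word{s}$, so that $x\word{s}<\word{s}'\le x\word{w}$. First I would pin down the leading symbol of $\word{s}'$: since $\word{s}'\le x\word{w}$ and $x\word{w}$ begins with $x$, its first symbol is at most $x$; and since $\word{s}'>x\word{s}$ and $x\word{s}$ also begins with $x$, its first symbol cannot be strictly below $x$. Hence $\word{s}'$ begins with $x$, and I can write $\word{s}'=x\word{t}$. The key structural remark is that $\word{t}$ is $\word{s}'$ with its leading character deleted, so---using that $\word{v}$ is treated cyclically and that a contiguous factor of a subword of $\word{v}$ is again a subword of $\word{v}$---we have $\word{t}\sqsubseteq_{|\word{w}|}\word{v}$.

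Now I would strip the common leading $x$. Because prepending a fixed symbol preserves both strict and non-strict lexicographic comparisons, $x\word{s}<x\word{t}\le x\word{w}$ yields $\word{s}<\word{t}\le\word{w}$ with $\word{t}\sqsubseteq_{|\word{w}|}\word{v}$. This contradicts Definition~\ref{def:bounding}: since $\word{s}$ bounds $\word{w}$, no subword $\word{u}\sqsubseteq_{|\word{w}|}\word{v}$ can satisfy $\word{s}<\word{u}\le\word{w}$. Therefore $\word{s}'\le x\word{s}$, which by the first paragraph completes the argument.

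The part requiring the most care is the leading-symbol analysis, which only goes through under the contradiction hypothesis $\word{s}'>x\word{s}$ (without it, nothing forces $\word{s}'$ to begin with $x$); the case distinction must be handled cleanly across the strict/non-strict boundary, including the extreme case $\word{s}'=x\word{w}$. The second point to keep in mind is that ``bounding'' concerns the \emph{value} of the subword and not its position in $\word{v}$, so that speaking of \emph{the} bounding subword---and hence reading the equivalence as an equality of words---is justified. The reduction that $\word{t}$ is a genuine subword of $\word{v}$ is routine but indispensable, since the whole proof hinges on feeding $\word{t}$ back into the bounding property of $\word{s}$.
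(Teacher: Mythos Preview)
Your proof is correct and follows essentially the same line as the paper's: both arguments reduce to the observation that if the putative bound $\word{s}'$ of $x\word{w}$ were strictly larger than $x\word{s}$, then $\word{s}'$ must start with $x$, and stripping that leading $x$ yields a subword $\word{t}\sqsubseteq_{|\word{w}|}\word{v}$ with $\word{s}<\word{t}\le\word{w}$, contradicting that $\word{s}$ bounds $\word{w}$. The only cosmetic difference is that the paper handles the two directions of the biconditional by two separate contradiction arguments, whereas you establish $\word{s}'\le x\word{s}$ once and then invoke uniqueness of the bounding subword to conclude both directions simultaneously; this is a tidier packaging of the same idea.
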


\begin{proof}
Let $\word{s}' \sqsubseteq \word{v}$ bound $x \word{w}$.
Since $\word{s} \leq \word{w}$, we have $x \word{s} \leq x \word{w}$.
For the sake of contradiction assume that $x \word{s}$ is bounded by $\word{u} < \word{s}'$.
If $\word{u}_1 < x$ then $\word{s}'_1 = x$ as for any smaller value of $\word{s}'_1$, $\word{u}$ would not bound $x \word{w}$.
Under this assumption $\word{s} < \word{s}'_{[2,|\word{s}'|]} \leq \word{w}$, in which case $\word{s}'_{[2,|\word{s}'|]}$ would bound $\word{w}$, contradicting this assumption.
If $\word{u}_1 = x$, then again $\word{s} < \word{s}'_{[2,|\word{s}'|]} < \word{w}$, in which case $\word{s}'_{[2,|\word{s}'|]}$ bounds $\word{w}$ contradicting the original assumption that $\word{s}$ bounds $\word{w}$.

In the other direction, let $\word{s}'$ bound $x \word{s}$.
If $\word{s}'$ does not bound $x \word{w}$ then there must exist some word $\word{u}$ bounding $x \word{w}$.
As $x \word{s} < \word{u} < x \word{w}$, $\word{u}_1 = x$ hence $\word{u} = x \word{u}'$.
Therefore $\word{u}'$ bounds $\word{w}$, contradicting our original assumption.
Hence $\word{s}'$ bounds $x \word{w}$ if and only if $\word{s}'$ bounds $x \word{s}$ where $\word{s}$ bounds $\word{w}$.
\end{proof}

\begin{lemma}
Let $\word{w},\word{v} \in \Sigma^*$,  let $x \in \Sigma$ and let $\word{s} \sqsubseteq \word{v}$ be the subword of $\word{v}$ that bounds $\word{w}$.
Let $\word{s}' \sqsubseteq \word{v}$ bound $\word{w} x$.
Either $\word{s}'$ bounds $\word{s} x$, or $\word{s}' = \word{s} y$ for $y > x$.
\label{lem:wx_part_one}
\end{lemma}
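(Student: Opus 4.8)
The plan is to split on how $\word{s}'$ compares lexicographically with the word $\word{s}x$, and to show that these two cases recover exactly the two alternatives in the statement. First I would record the easy half: since $\word{s}$ bounds $\word{w}$ we have $\word{s} \leq \word{w}$, hence $\word{s}x \leq \word{w}x$. If it happens that $\word{s}' \leq \word{s}x$, I claim $\word{s}'$ already bounds $\word{s}x$. Indeed $\word{s}' \leq \word{s}x$ is the first requirement of the bounding definition, and were there a subword $\word{u} \sqsubseteq \word{v}$ with $\word{s}' < \word{u} \leq \word{s}x$, then chaining with $\word{s}x \leq \word{w}x$ would give $\word{s}' < \word{u} \leq \word{w}x$, contradicting the hypothesis that $\word{s}'$ bounds $\word{w}x$. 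This lands us in the first alternative.

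The remaining case is $\word{s}' > \word{s}x$, where I aim for the second alternative $\word{s}' = \word{s}y$ with $y > x$. Here the key structural observation is that the length-$|\word{w}|$ prefix of $\word{s}'$ is itself a subword of the cyclic word $\word{v}$; write $\word{s}' = \word{p}z$ with $\word{p} \sqsubseteq_{|\word{w}|} \word{v}$ and $z \in \Sigma$. From $\word{s}' \leq \word{w}x$ I first deduce $\word{p} \leq \word{w}$, since $\word{p} > \word{w}$ would force $\word{s}' = \word{p}z > \word{w}x$ regardless of the final symbol. Then the maximality clause defining $\word{s}$, applied to the length-$|\word{w}|$ subword $\word{p} \leq \word{w}$, gives $\word{p} \leq \word{s}$.

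Now I would use the standing assumption $\word{s}' = \word{p}z > \word{s}x$ to pin $\word{p}$ down exactly. If $\word{p} < \word{s}$, the first $|\word{w}|$ symbols alone force $\word{p}z < \word{s}x$, contradicting $\word{s}' > \word{s}x$; combined with $\word{p} \leq \word{s}$ this forces $\word{p} = \word{s}$. Substituting back gives $\word{s}' = \word{s}z$, and comparing with $\word{s}' > \word{s}x$ on the one remaining symbol yields $z > x$. Setting $y = z$ establishes the second alternative, and since the two cases $\word{s}' \leq \word{s}x$ and $\word{s}' > \word{s}x$ are exhaustive under the total lexicographic order, no further cases are needed.

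I expect the main obstacle to be the bookkeeping around the prefix decomposition $\word{s}' = \word{p}z$: one must justify that truncating a length-$(|\word{w}|+1)$ subword of the cyclic word $\word{v}$ leaves a genuine length-$|\word{w}|$ subword, so that the maximality property defining $\word{s}$ may legitimately be invoked for $\word{p}$. Once that truncation is justified, every remaining step reduces to a single-position lexicographic comparison, so the proof should be short and free of delicate edge cases.

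\hfill\qedsymbol
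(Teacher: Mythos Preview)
Your proposal is correct and follows essentially the same route as the paper's proof: both split on whether $\word{s}'$ equals the bound of $\word{s}x$ (equivalently, whether $\word{s}' \leq \word{s}x$), and in the remaining case use that the length-$|\word{w}|$ prefix $\word{p}$ of $\word{s}'$ is a subword of $\word{v}$ with $\word{p} \leq \word{w}$, so the maximality of $\word{s}$ forces $\word{p} = \word{s}$ and hence $\word{s}' = \word{s}y$ with $y > x$. Your write-up is simply more explicit than the paper's rather terse version, and your closing remark about justifying that the prefix $\word{p}$ is again a genuine subword of the cyclic word $\word{v}$ is a point the paper uses silently.
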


\begin{proof}
Let $\word{u}$ bound $\word{s} x$.
If $\word{u} \neq \word{s}'$ then as $\word{w} x \geq \word{s}' > \word{s} x \geq \word{u}$, if $\word{s}'_{[1,|\word{\word{s}}|]} > \word{s}$ then $\word{s}'_{[1,|\word{\word{s}}|]}$ must bound $\word{w}$, contradicting the assumption that $\word{s}$ bounds $\word{w}$.
Therefore the only possible value of $\word{s}' > \word{s} x$ is when $\word{s}' = \word{s} y$ for some $y > x$.
\end{proof}

\begin{lemma}
Let $\word{w}, \word{u}, \word{v} \in \Sigma^*$, let $x \in \Sigma$ and let $\word{s} \sqsubseteq \word{v}$ be the subword of $\word{v}$ that strictly bounds both $\word{w}$ and $\word{u}$.
The word $\word{s}' \sqsubseteq \word{v}$ which bounds $\word{w} x$ will also bound $\word{u} x$.
\label{lem:bounding_same_wx}
\end{lemma}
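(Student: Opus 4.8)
The plan is to prove the slightly stronger fact that the bounding subword of $\word{w}x$ is determined by $\word{s}$ and $x$ alone, so it cannot help but bound $\word{u}x$ as well. First I would record the two consequences of \emph{strict} bounding that drive the whole argument. Since $\word{s}$ strictly bounds both $\word{w}$ and $\word{u}$ (and note this forces $|\word{w}|=|\word{u}|$, call it $m$), Definition~\ref{def:bounding} tells us there is no length-$m$ subword of $\word{v}$ in $(\word{s},\word{w}]$ nor in $(\word{s},\word{u}]$. From this I extract exactly what I need: neither $\word{w}$ nor $\word{u}$ is itself a subword of $\word{v}$, and every length-$m$ subword $\word{p}\sqsubseteq_m\word{v}$ with $\word{p}\le\word{w}$ (or $\word{p}\le\word{u}$) actually satisfies $\word{p}\le\word{s}$.

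Next I would apply Lemma~\ref{lem:wx_part_one} to $\word{w}$, which yields a clean dichotomy on the shape of $\word{s}'$: either $\word{s}'=\word{s}y$ for some $y>x$, or $\word{s}'$ bounds $\word{s}x$. In each case I must verify the two halves of ``$\word{s}'$ bounds $\word{u}x$'': the inequality $\word{s}'\le\word{u}x$, and the non-existence of a length-$(m+1)$ subword in $(\word{s}',\word{u}x]$. The inequality is routine in both cases, reducing to $\word{s}<\word{u}$ (true by strict bounding), since the comparison is already decided within the first $m$ symbols.

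The core is ruling out an offending subword $\word{t}=\word{p}c$, with prefix $\word{p}\sqsubseteq_m\word{v}$ and last symbol $c$, satisfying $\word{s}'<\word{t}\le\word{u}x$. From $\word{t}\le\word{u}x$ I get $\word{p}\le\word{u}$, hence $\word{p}<\word{u}$ (as $\word{u}$ is not a subword), hence $\word{p}\le\word{s}$. If $\word{p}<\word{s}$, then $\word{t}$ falls below $\word{s}'$ (in the first case) or below $\word{s}x$ (in the second), contradicting the case hypothesis. The remaining possibility $\word{p}=\word{s}$, i.e. $\word{t}=\word{s}c$, is where I expect the only real subtlety. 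For small $c$ one again contradicts either $\word{s}'=\word{s}y$ or the fact that $\word{s}'$ bounds $\word{s}x$; but for $c>x$ neither of these facts is enough, and I must reintroduce $\word{w}$. This is the crux: using $\word{s}<\word{w}$ we get $\word{s}c<\word{w}x$, so $\word{t}=\word{s}c$ would lie in $(\word{s}',\word{w}x]$, contradicting the hypothesis that $\word{s}'$ bounds $\word{w}x$. Thus the key step is invoking the bounding of $\word{w}x$ itself, not merely of $\word{s}x$; everything else is a routine lexicographic comparison of prefixes, and the whole argument is symmetric in $\word{w}$ and $\word{u}$.
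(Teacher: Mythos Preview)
Your proposal is correct and follows essentially the same route as the paper: invoke Lemma~\ref{lem:wx_part_one} to constrain the shape of the bounding subword, then use the strict inequality $\word{s}<\word{w}$ to force any candidate $\word{t}=\word{s}c$ below $\word{w}x$, contradicting that $\word{s}'$ bounds $\word{w}x$. The only difference is organizational: the paper applies Lemma~\ref{lem:wx_part_one} to the hypothetical bound $\word{t}$ of $\word{u}x$ and argues in one stroke, whereas you apply it to $\word{s}'$ up front and then run a case analysis on the prefix of $\word{t}$; your version is more explicit (in particular you verify $\word{s}'\le\word{u}x$ separately), but the crux step is identical.
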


\begin{proof}
For the sake of contradiction assume $\word{u} x$ is bounded by $\word{t}$.
This implies that $\word{s}' < \word{w} x < \word{t} \leq \word{u} x$.
Following Lemma \ref{lem:wx_part_one}, $\word{t} = \word{s} y$ for $y > x$.
However, as $\word{w} > \word{s}$, $\word{t}$ must be less than $\word{w}$ and hence $\word{t}$ would be a better bound for $\word{w}$.
\end{proof}

\begin{proposition}
Let $\word{v} \in \Sigma^n$.
The array $XW[\word{s} \sqsubseteq \word{v},x \in \Sigma]$ such that $XW[\word{s},x]$ strictly bounds $x \word{w}$ for every $\word{w}$ strictly bounded by $\word{s}$ can be computed in time $O(k \cdot n^3 \cdot  \log(n))$.
\label{prop:complexity_XW}
\end{proposition}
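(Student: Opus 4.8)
The plan is to reduce the computation of each array entry to a single bounding query among the subwords of $\word{v}$ of a fixed length, and then to amortise these queries over precomputed sorted lists. The starting observation is Lemma~\ref{lem:bounding_same_xw}: if $\word{s}$ bounds $\word{w}$, then the subword $\word{s}'$ that bounds $x\word{w}$ is exactly the subword that bounds $x\word{s}$. Hence $XW[\word{s},x]$ does not depend on the particular $\word{w}$ bounded by $\word{s}$ — it depends only on $\word{s}$ and $x$ — and it can be obtained as the length-$(|\word{s}|+1)$ subword of $\word{v}$ that bounds the single word $x\word{s}$. Moreover this subword is automatically a \emph{strict} bound of $x\word{w}$ whenever $\word{w}$ is strictly bounded by $\word{s}$: writing $\word{s}'$ for the bound of $x\word{s}$ we have $\word{s}' \leq x\word{s}$, and since $\word{s} < \word{w}$ gives $x\word{s} < x\word{w}$, we obtain $\word{s}' < x\word{w}$. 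Thus it suffices to compute, for every subword $\word{s} \sqsubseteq \word{v}$ with $|\word{s}| < n$ and every $x \in \Sigma$, the largest subword $\word{s}' \sqsubseteq_{|\word{s}|+1} \word{v}$ with $\word{s}' \leq x\word{s}$.

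Concretely, I would first precompute, for each length $\ell \in [n]$, the sorted list $\mathbf{S}(\word{v},\ell)$ of the cyclically indexed subwords of $\word{v}$ of length $\ell$. For each fixed pair $(\word{s},x)$ with $|\word{s}| = \ell$, I would then form the word $x\word{s}$ of length $\ell+1$ and locate its predecessor in the sorted list $\mathbf{S}(\word{v},\ell+1)$ by binary search; the value found is stored in $XW[\word{s},x]$. If $x\word{s}$ is smaller than every length-$(\ell+1)$ subword of $\word{v}$, then no bound exists and the entry is set to a sentinel, to be handled separately by the ranking procedure.

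For the complexity, there are $n$ subwords of each of the $n-1$ admissible lengths, so $O(n^2)$ choices of $\word{s}$, and $k$ choices of $x$, giving $O(k \cdot n^2)$ entries. Each entry requires one binary search over at most $n$ sorted words, i.e. $O(\log n)$ comparisons, and each comparison of two words of length at most $n$ costs $O(n)$; hence each entry costs $O(n \log n)$ and all entries together cost $O(k \cdot n^3 \cdot \log n)$. Building the sorted lists costs at most $O(n^2 \log n)$ comparisons per length and $O(n^3 \log n)$ in total, which is dominated by the previous term since $k \geq 1$. This yields the claimed bound $O(k \cdot n^3 \cdot \log(n))$.

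The main difficulty is not the running-time accounting, which is routine, but making the reduction airtight: I must verify that the length-$(|\word{s}|+1)$ bound of $x\word{s}$ genuinely serves as the strict bound of $x\word{w}$ \emph{simultaneously} for all $\word{w}$ strictly bounded by $\word{s}$, which is precisely what Lemma~\ref{lem:bounding_same_xw} guarantees, and to treat correctly the cyclic nature of the subwords — so that $\mathbf{S}(\word{v},\ell+1)$ contains all length-$(\ell+1)$ windows of $\word{v}$, including those wrapping around — together with the boundary case in which $x\word{s}$ admits no bound.
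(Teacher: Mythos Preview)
Your proposal is correct and follows essentially the same approach as the paper: reduce each entry $XW[\word{s},x]$ to finding the subword of length $|\word{s}|+1$ bounding $x\word{s}$ (via Lemma~\ref{lem:bounding_same_xw}) and locate it by binary search over $\mathbf{S}(\word{v},|\word{s}|+1)$, yielding $O(n\log n)$ per entry and $O(k\cdot n^3\log n)$ overall. Your treatment is in fact more careful than the paper's, explicitly checking that the bound on $x\word{s}$ is a \emph{strict} bound on $x\word{w}$, accounting for the cost of sorting the lists, and flagging the cyclic and no-bound boundary cases.
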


\begin{proof}
Given some pair of arguments $\word{s} \sqsubseteq \word{v}, x \in \Sigma$, the word bounding $x \word{s}$ can be found through a binary search on $\mathbf{S}(\word{v},|\word{s}| + 1)$. 
As each comparison will take at most $O(n)$ operations, and at most $\log(n)$ comparisons are needed, each entry can be computed in $O(n \log(n))$ operations.
As there are $O(n^2)$ subwords of $\word{v}$ and $k$ characters in $\Sigma$, there is at most $O(k \cdot n^3 \log(n))$ operations needed.
\end{proof}

\begin{proposition}
Let $\word{v} \in \Sigma^n$.
The array $WX[\word{s} \sqsubseteq \word{v},x \in \Sigma]$, such that $WX[\word{s},x]$ strictly bounds $\word{w} x$ for every $\word{w}$ strictly bounded by $\word{s}$, can be computed in $O(k \cdot n^3 \cdot \log(n))$ time.
\label{prop:complexity_WX}
\end{proposition}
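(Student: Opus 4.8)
The plan is to mirror the proof of Proposition~\ref{prop:complexity_XW}, replacing the role of Lemma~\ref{lem:bounding_same_xw} with Lemmas~\ref{lem:wx_part_one} and~\ref{lem:bounding_same_wx}. First I would invoke Lemma~\ref{lem:bounding_same_wx} to argue that the entry is well defined from its indices alone: since any two words $\word{w},\word{u}$ strictly bounded by the same $\word{s}$ produce the same bound for $\word{w}x$ and $\word{u}x$, the word strictly bounding $\word{w}x$ depends only on $\word{s}$ and $x$, not on the particular $\word{w}$. Hence it is legitimate to store a single value $WX[\word{s},x]$ and to compute it using $\word{s}$ and $x$ as surrogates for an arbitrary representative $\word{w}$.

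Next, to compute $WX[\word{s},x]$ for a fixed pair $(\word{s},x)$, I would use the dichotomy of Lemma~\ref{lem:wx_part_one}: the sought word $\word{s}'$ either bounds $\word{s}x$ or has the form $\word{s}y$ with $y>x$. This reduces the search to two explicit candidates. The first candidate, the subword of $\word{v}$ of length $|\word{s}|+1$ that bounds $\word{s}x$, is found by a binary search on the sorted list $\mathbf{S}(\word{v},|\word{s}|+1)$, exactly as in Proposition~\ref{prop:complexity_XW}; each comparison costs $O(n)$ and $O(\log n)$ comparisons suffice, for $O(n\log n)$ total. The second candidate is obtained by a further binary search on the same list to locate the largest subword whose length-$|\word{s}|$ prefix equals $\word{s}$; if the trailing symbol $y$ of this subword satisfies $y>x$, then $\word{s}y \le \word{w}x$ and $\word{s}y$ strictly dominates the first candidate, so it is the correct bound, while otherwise the bound is the first candidate. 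Selecting between the two candidates is $O(n)$, so each entry costs $O(n\log n)$.

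Finally I would total the cost: there are $O(n^2)$ subwords $\word{s}\sqsubseteq\word{v}$ and $k$ choices of $x\in\Sigma$, and each of the $O(k\cdot n^2)$ entries is computed in $O(n\log n)$ time, giving the claimed $O(k\cdot n^3\cdot\log n)$ bound.

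The main obstacle, and the only genuine departure from Proposition~\ref{prop:complexity_XW}, is that for appending a symbol on the right the bound of $\word{w}x$ is not simply the bound of $\word{s}x$: the extra case $\word{s}y$ with $y>x$ from Lemma~\ref{lem:wx_part_one} must be accounted for. The care needed is to verify that exactly these two candidates can arise and that choosing the larger valid one yields the true bound; this is precisely what Lemma~\ref{lem:wx_part_one} guarantees, while Lemma~\ref{lem:bounding_same_wx} is what makes the computed value independent of the representative word $\word{w}$ and hence well defined as a single array entry.
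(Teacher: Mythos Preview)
Your proposal is correct and reaches the same $O(k\cdot n^3\log n)$ bound, but the route differs from the paper's. The paper does not use the dichotomy of Lemma~\ref{lem:wx_part_one} at all in the computation: it simply fixes the concrete representative $\word{w}$ equal to the lexicographic successor of $\word{s}$ (the smallest word strictly greater than $\word{s}$), forms $\word{w}x$, and performs a \emph{single} binary search over $\mathbf{S}(\word{v},|\word{s}|+1)$ to find the subword bounding $\word{w}x$; Lemma~\ref{lem:bounding_same_wx} then certifies that this same subword bounds $\word{u}x$ for every $\word{u}$ strictly bounded by $\word{s}$. Your approach instead keeps $\word{w}$ abstract and uses Lemma~\ref{lem:wx_part_one} to split into two explicit candidates (the bound of $\word{s}x$ and the largest subword with prefix $\word{s}$), each located by its own binary search. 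Both arguments are valid; the paper's is shorter and needs only one search per entry, while yours makes the structural reason the answer depends only on $(\word{s},x)$ more transparent and avoids having to construct and manipulate the successor word explicitly.
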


\begin{proof}
For some pair pair of arguments $\word{s} \sqsubseteq \word{v}, x \in \Sigma$, let $\word{w}$ be the smallest word greater than $\word{s}$.
The word bounding $\word{w} x$ can be found through a binary search of $\mathbf{S}(\word{v}, |\word{s}| + 1)$.
Following Lemma \ref{lem:bounding_same_wx}, given any word $\word{u}$ strictly bounded by $\word{s}$, $\word{u} x$ will also be bounded by the same word bounding $\word{w} x$.
As in Proposition \ref{prop:complexity_XW}, each comparison will take at most $O(n)$ operations, with the search requiring at most $\log(n)$ comparisons.
As there are $n^2 \cdot k$ arguments, at most $O(k \cdot n^3 \log(n))$ operations are needed to compute every value of $WX$.
\end{proof}



\section{Ranking Bracelets}
\label{sec:ranking-bracelets}
The main result of the paper is the first algorithm for ranking bracelets.
In this paper, we tacitly assume that we are ranking a word $\word{v}$ of length $n$.
The time-complexity of the ranking algorithm is $O(k^2\cdot n^4)$, where $k$ is the size of the alphabet and $n$ is the length of the considered bracelets.
The key part of the algorithm is to compute the rank 
of the word $\word{v}$ with respect to the set of bracelets by finding three other ranks: the rank over all necklaces, the rank over palindromic necklaces, and the rank over enclosing apalindromic necklaces.

A bracelet can correspond to two apalindromic necklaces, or to exactly one palindromic necklace.
If a bracelet $\bracelet{b}$ corresponds to two necklaces $\necklace{l}_b$ and $\necklace{r}_b$, then it is important to take into account the lexicographical positions of these two necklaces $\necklace{l}_b$ and $\necklace{r}_b$ with respect to a given word $\word{v}$.
There are three possibilities: $\necklace{l}_b$ and $\necklace{r}_b$ could be less than $\word{v}$;  $\necklace{l}_b$ and $\necklace{r}_b$ encloses $\word{v}$, e.g. $\necklace{l}_b < \word{v} < \necklace{r}_b$, or both of necklaces $\necklace{l}_b$ and $\necklace{r}_b$ are greater than $\word{v}$. This is visualised in Figure \ref{fig:ranking_examples}.
Therefore the number of bracelets smaller than a given word $w$ can be calculated by adding the number of palindromic necklaces less than $\word{v}$, enclosing bracelets smaller than $\word{v}$ and half of all other apalindromic and non-enclosing necklaces smaller than $\word{v}$.
Let us define the following notation is used for the rank of $\word{v} \in \Sigma^n$ for sets of bracelets and necklaces.
\begin{itemize}
    \item[$\circ$] $RN(\word{v})$ denotes the rank of $\word{v}$ with respect to the set of \emph{necklaces} of length $n$ over $\Sigma$.
    \item[$\circ$] $RP(\word{v})$ denotes the rank of $\word{v}$ with respect to the set of \emph{palindromic necklaces} over $\Sigma$.
    \item[$\circ$] $RB(\word{v})$ denotes the rank of $\word{v}$ with respect to the set of \emph{bracelets} of length $n$ over $\Sigma$.
    \item[$\circ$] $RE(\word{v})$ denotes the rank of $\word{v}$ with respect to the set of \emph{bracelets enclosing} $\word{v}$.
\end{itemize}

\begin{figure}[ht]
    \centering
    \includegraphics[scale=0.85]{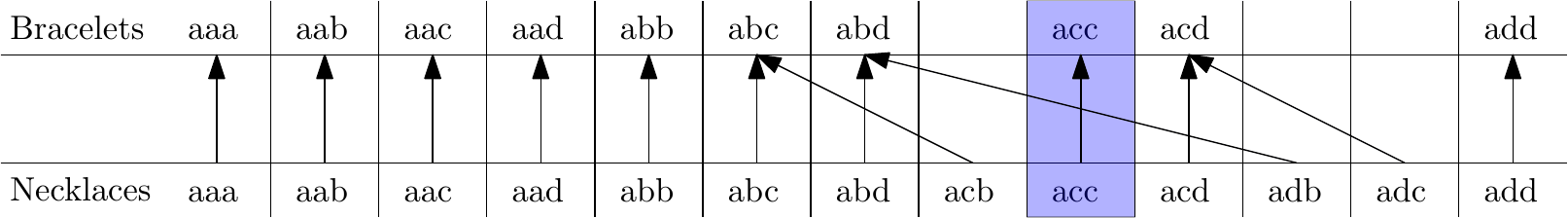}
    \caption{In this example the top line represents the set of bracelets and the bottom line the set of necklaces, with arrows indicated which necklace corresponds to which bracelet.
    Assuming we wish to rank the word $acc$ (highlighted), $abc$ and $acb$ are apalindromic necklaces smaller than $acc$, while $abd$ encloses $acc$.
    All other necklaces are palindromic.
    }
    \label{fig:ranking_examples}
\end{figure}

\noindent
In Lemma \ref{lem:ranking_bracelets} below, we show that $RB(\word{v})$ can be expressed via  $RN(\word{v})$, $RP(\word{v})$ and $RE(\word{v})$.
The problem of computing $RN(\word{v})$ has been solved in quadratic time \cite{Sawada2017}, so the goal of the paper is to design efficient procedures for computing  $RP(\word{v})$ and $RE(\word{v})$.

\begin{lemma}
The rank of a word $\word{v} \in \Sigma^n$ with respect to the set of bracelets of length $n$ over the alphabet $\Sigma$ is given by $RB(\word{v}) = \frac{1}{2}\left(RN(\word{v}) + RP(\word{v}) + RE(\word{v}) \right)$.
\label{lem:ranking_bracelets}
\end{lemma}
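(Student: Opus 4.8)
The plan is to establish the formula by setting up a bijective correspondence between bracelets and their associated necklace classes, then carefully tracking how each bracelet contributes to the three ranks $RN(\word{v})$, $RP(\word{v})$, and $RE(\word{v})$. Recall that each bracelet $\bracelet{b}$ is the union $\necklace{b} \cup \necklace{b}^R$, so it corresponds either to a single palindromic necklace (when $\necklace{b} = \necklace{b}^R$) or to exactly two distinct apalindromic necklaces $\necklace{l}_b < \necklace{r}_b$. The strategy is to count, for each type of bracelet smaller than $\word{v}$, exactly how many times it is tallied on the right-hand side and verify the total equals $1$.

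First I would partition the bracelets smaller than $\word{v}$ (that is, the bracelets counted by $RB(\word{v})$, whose representative $[\bracelet{b}]$ satisfies $[\bracelet{b}] < \word{v}$) into three cases according to the structure laid out in the preceding discussion. A palindromic bracelet smaller than $\word{v}$ contributes $1$ to $RN(\word{v})$ (its single necklace is counted once among necklaces) and $1$ to $RP(\word{v})$, and $0$ to $RE(\word{v})$, giving $\frac{1}{2}(1+1+0) = 1$. For an apalindromic bracelet with both necklaces smaller than $\word{v}$, it contributes $2$ to $RN(\word{v})$ (both $\necklace{l}_b$ and $\necklace{r}_b$ are counted), $0$ to $RP(\word{v})$, and $0$ to $RE(\word{v})$, yielding $\frac{1}{2}(2+0+0) = 1$. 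For an apalindromic bracelet enclosing $\word{v}$, i.e. $\necklace{l}_b < \word{v} < \necklace{r}_b$, it contributes $1$ to $RN(\word{v})$ (only $\necklace{l}_b$ is smaller), $0$ to $RP(\word{v})$, and $1$ to $RE(\word{v})$, giving $\frac{1}{2}(1+0+1) = 1$. So each bracelet counted by $RB(\word{v})$ contributes exactly $1$ to the right-hand side.

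Conversely, I would check that bracelets \emph{not} smaller than $\word{v}$ contribute $0$. The subtle case is an apalindromic bracelet with both necklaces at least $\word{v}$: its representative is $\necklace{l}_b \geq \word{v}$, so it is not counted by $RB(\word{v})$, and it contributes $0$ to all three terms. The only other possibility for a bracelet not counted by $RB$ but potentially touching the ranks is again the enclosing configuration, but the definition of enclosing uses the \emph{smaller} necklace as the bracelet representative, so an enclosing bracelet always satisfies $[\bracelet{b}] = \necklace{l}_b < \word{v}$ and is therefore genuinely smaller than $\word{v}$; this confirms enclosing bracelets are a sub-case of $RB(\word{v})$ rather than an exceptional term outside it.

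The main obstacle I anticipate is precisely this bookkeeping around which necklace serves as the bracelet representative and how the strict-versus-nonstrict inequalities interact at the boundary when some necklace equals $\word{v}$ exactly. One must be careful that $RE(\word{v})$ counts enclosing bracelets consistently with how $RN$ and $RP$ treat equality, so that the halving produces integers and no bracelet is double-counted or dropped. Once the three contribution cases are verified to sum correctly and the non-smaller bracelets are confirmed to contribute zero, summing over all bracelets gives $\sum 1 = RB(\word{v})$ on the left and $\frac{1}{2}(RN(\word{v}) + RP(\word{v}) + RE(\word{v}))$ on the right, completing the proof.
\hfill $\qed$
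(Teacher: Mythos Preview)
Your proposal is correct and follows essentially the same approach as the paper: both arguments partition the bracelets smaller than $\word{v}$ into the three cases (palindromic, apalindromic with both necklaces below $\word{v}$, and enclosing) and verify the formula by case analysis. The paper phrases the count algebraically---computing the number of bracelets with two small necklaces as $\tfrac{1}{2}(RN(\word{v})-RP(\word{v})-RE(\word{v}))$ and then adding $RP(\word{v})+RE(\word{v})$---whereas you track each bracelet's contribution to the three ranks directly, but the underlying decomposition is identical.
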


\begin{proof}
Simply dividing the number of necklaces by 2 will undercount the number of bracelets, while doing nothing will overcount.
Therefore to get the correct number of bracelets, those bracelets corresponding to only 1 necklace must be accounted for.
A bracelet $\bracelet{a}$ will correspond to 2 necklaces smaller than $\word{v}$ if and only if $\bracelet{a}$ does not enclose $\word{v}$ and $\bracelet{a}$ is apalindromic.
Therefore the number of bracelets corresponding to 2 necklaces is $\frac{1}{2}\left(RN(\word{v}) - RP(\word{v}) - RE(\word{v})\right)$.
The number of bracelets enclosing $\word{v}$ is equal to $RE(\word{v})$. 
The number of bracelets corresponding to palindromic necklaces is equal to $RP(\word{v})$.
Therefore the total number of bracelets is $\frac{1}{2}\left(RN(\word{v}) - RP(\word{v}) - RE(\word{v})\right) + RP(\word{v}) + RE(\word{v}) = \frac{1}{2}\left(RN(\word{v}) + RP(\word{v}) + RE(\word{v})\right)$.
\end{proof}

\noindent
Lemma \ref{lem:ranking_bracelets} provides the basis for ranking bracelets.
Theorem \ref{thm:ranking_complexity} uses Lemma \ref{lem:ranking_bracelets} to get the complexity of the ranking process.
The remainder of this paper will prove Theorem \ref{thm:ranking_complexity}, starting with the complexity of ranking among palindromic necklaces in Section \ref{sec:Ptv} followed by the complexity of ranking enclosing bracelets in Section \ref{sec:Stv}.

\begin{theorem}
Given a word $\word{v} \in \Sigma^n$, the rank of $\word{v}$ with respect to the set of bracelets of length $n$ over the alphabet $\Sigma$, $RB(\word{v})$, can be computed in $O(k \cdot n^4)$ time.
\label{thm:ranking_complexity}
\end{theorem}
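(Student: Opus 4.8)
The plan is to combine Lemma~\ref{lem:ranking_bracelets} with complexity bounds for each of the three constituent ranks. By that lemma, $RB(\word{v}) = \frac{1}{2}\left(RN(\word{v}) + RP(\word{v}) + RE(\word{v})\right)$, so the total running time is dominated by the cost of computing $RN(\word{v})$, $RP(\word{v})$, and $RE(\word{v})$ individually (the final additions and halving are $O(1)$ once these are known, up to the cost of arithmetic on numbers with $O(n \log k)$ bits, which is negligible). Thus the theorem reduces to establishing that each of these three ranks can be computed within the claimed $O(k \cdot n^4)$ budget.

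First I would dispatch $RN(\word{v})$ by citing the existing quadratic-time necklace ranking algorithm of Sawada et al.~\cite{Sawada2017}; this term is comfortably absorbed into the overall bound. The remaining two terms, $RP(\word{v})$ and $RE(\word{v})$, are the substance of the theorem and are exactly what Sections~\ref{sec:Ptv} and~\ref{sec:Stv} are set up to handle. My plan is to invoke the main complexity results of those sections: the palindromic necklace ranking procedure and the enclosing bracelet ranking procedure, each of which I expect to run in $O(k \cdot n^4)$ time (or faster). A key preliminary cost that both procedures share is the precomputation of the bounding arrays $XW$ and $WX$ from Propositions~\ref{prop:complexity_XW} and~\ref{prop:complexity_WX}; each of these takes $O(k \cdot n^3 \log n)$ time, again well within budget and incurred only once.

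The hard part is not this top-level bookkeeping but rather establishing the $O(k \cdot n^4)$ bound for the enclosing rank $RE(\word{v})$, which is the most intricate of the three quantities. Counting enclosing bracelets requires the subword-bounding machinery of Section~\ref{subsec:bounding_subwords}: one iteratively relates the sets $\mathbf{S}_i$ of words whose length-$i$ subwords are all at least $\word{v}$ to $\mathbf{S}_{i+1}$, using the precomputed arrays to route each part $\mathbf{S}_i(\word{s})$ to its successor part in amortized constant time per transition. I expect the dominant contribution to come from iterating over the $O(n)$ lengths, the $O(n^2)$ candidate bounding subwords $\word{s} \sqsubseteq \word{v}$, and the $k$ alphabet symbols, with an additional factor reflecting the reflection/enclosing analysis, which should multiply out to $O(k \cdot n^4)$.

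Assembling these pieces, the theorem then follows immediately: each of the three ranks is computable in $O(k \cdot n^4)$ time, so their weighted sum $RB(\word{v})$ is as well. The main obstacle I anticipate is verifying that the enclosing case does not hide an extra factor of $n$ or $k$ in the per-part processing; the careful definition of \emph{bounded} (Definition~\ref{def:bounding}) and Lemmas~\ref{lem:bounding_same_xw}--\ref{lem:bounding_same_wx} are precisely what make the transition between consecutive $\mathbf{S}_i$ levels cheap enough to keep the bound tight.
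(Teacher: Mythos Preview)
Your proof plan is essentially identical to the paper's: invoke Lemma~\ref{lem:ranking_bracelets} to decompose $RB(\word{v})$ into $RN$, $RP$, and $RE$, cite Sawada--Williams for $RN$ in $O(n^2)$, cite Theorem~\ref{thm:pallindromic_comlexity} for $RP$, and cite Theorem~\ref{thm:enclosing_complexity} for $RE$, then add. That is precisely the structure of the paper's proof at the end of Section~\ref{sec:Stv}.

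One point deserves flagging, though. You write that you expect each of $RP$ and $RE$ to run in $O(k \cdot n^4)$ time or faster, matching the theorem's stated bound. For $RP$ this is fine (Theorem~\ref{thm:pallindromic_comlexity} gives $O(k \cdot n^3 \log^2 n)$). But Theorem~\ref{thm:enclosing_complexity} actually establishes $O(k^2 \cdot n^4)$ for $RE$, not $O(k \cdot n^4)$: the dynamic program in Lemma~\ref{lem:SO_computation} iterates over both the prefix symbol $x \in \Sigma$ and the extension symbol $z \in \Sigma$, producing a $k^2$ factor. The paper's own proof of Theorem~\ref{thm:ranking_complexity} accordingly concludes with an overall complexity of $O(k^2 \cdot n^4)$, consistent with the abstract. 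The $O(k \cdot n^4)$ in the theorem statement appears to be a typo; your plan is correct modulo that same typo, but your informal accounting for the enclosing case (``should multiply out to $O(k \cdot n^4)$'') undercounts one factor of $k$.
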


\noindent
The remainder of this paper will prove Theorem \ref{thm:ranking_complexity}.
For simplicity, the word $\word{v}$ is assumed to be a necklace representation.
It is well established how to find the lexicographically largest necklace smaller than or equal to some given word.
Such a word can be found in quadratic time using an algorithm form~\cite{Sawada2017}.
Note that the number of necklaces less than or equal to $\word{v}$ corresponds to the number of necklaces less than or equal to the lexicographically largest necklace smaller than $\word{v}$.
From Lemma \ref{lem:ranking_bracelets} it follows that to rank $\word{v}$ with respect to the set of bracelets, it is sufficient to rank $\word{v}$ with respect to the set of necklaces, palindromic necklaces, and enclosing bracelets.
The rank with respect to the set of palindromic necklaces, $RP(\word{v})$ can be computed in $O(k\cdot n^3)$ using the techniques given in Theorem \ref{thm:pallindromic_comlexity} in Section \ref{sec:Ptv}. 
The rank with respect to the set of enclosing bracelets, $RE(\word{v})$ can be computed in $O(k\cdot n^4)$ 
as shown in Theorem \ref{thm:enclosing_complexity} in Section \ref{sec:Stv}. 
As each of these steps can be done independently of each other, the total complexity is $O(k\cdot n^4)$.

This complexity bound is a significant improvement over the naive method of enumerating all bracelets, requiring exponential time in the worst case.
New intuition is provided to rank the palindromic and enclosing cases.
The main source of complexity for the problem of ranking comes from having to consider the lexicographic order of the word under reflection.
New combinatorial results and algorithms are needed to count the bracelets in these cases.

Before showing in detail the algorithmic results that allow bracelets to be efficiently ranked, it is useful to discus the high level ideas.
Lemma \ref{lem:ranking_bracelets} shows our approach to ranking bracelets by dividing the problem into the problems of ranking necklaces, palindromic necklaces and enclosing bracelets.
For both palindromic necklaces and enclosing bracelets, we derive a \emph{canonical form} using the combinatorial properties of these objects.

Using these canonical forms, the number of necklaces smaller than $\word{v}$ is counted in an iterative manner. 
In the palindromic case, this is done by counting the number of necklaces greater than $\word{v}$, and subtracting this from the total number of palindromic necklaces.
In the enclosing case, this is done by directly counting the number of necklaces smaller than $\word{v}$.
For both cases, the counting is done by way of a tree comprised of the set of all prefixes of words of the canonical form.
By partitioning the internal vertices of the trees based on the number of children of the vertices, the number of words of the canonical form may be derived in an efficient manner, forgoing the need to explicitly generate the tree.
This allows the size of these partitions to be computed through a dynamic programming approach.
It follows from these partitions how to count the number of leaf nodes, corresponding to the canonical form.

\begin{theorem}
\label{thm:unranking}
The $z^{th}$ bracelet of length $n$ over $\Sigma$ can be computed in $O(n^5 \cdot k^2 \cdot \log(k))$.
\end{theorem}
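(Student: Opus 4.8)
The plan is to construct the rank-$z$ bracelet one symbol at a time, from left to right, invoking the ranking routine of Theorem~\ref{thm:ranking_complexity} as a black box and using a binary search over the alphabet to commit to each symbol. Every bracelet is uniquely identified by its representative $[\bracelet{w}]$, the lexicographically smallest word in its class, and these representatives inherit the lexicographic linear order. Consequently, unranking reduces to locating the unique bracelet representative $\word{u} \in \Sigma^n$ of rank $z$, that is, the representative for which $RB(\word{u}) = z$.

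First I would record the elementary monotonicity of the ranking function: if $\word{w} \leq \word{w}'$ then $RB(\word{w}) \leq RB(\word{w}')$, because the set of bracelets whose representative is lexicographically below $\word{w}$ is contained in the analogous set for $\word{w}'$. Let $a$ be the smallest symbol of $\Sigma$, and suppose the prefix $\word{u}_1 \hdots \word{u}_{i-1}$ of the target representative has already been fixed. To determine $\word{u}_i$, for each candidate $x \in \Sigma$ I consider the padded word $\word{w}(x) = \word{u}_1 \hdots \word{u}_{i-1}\, x\, a^{n-i}$, the lexicographically smallest length-$n$ word whose length-$i$ prefix equals $\word{u}_1 \hdots \word{u}_{i-1} x$. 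The map $x \mapsto RB(\word{w}(x))$ is non-decreasing by monotonicity, and the key counting claim is that $\word{u}_i$ is exactly the largest symbol $x$ for which $RB(\word{w}(x)) \leq z$. This symbol is located with $O(\log k)$ calls to the ranking routine by binary search over the $k$ symbols of $\Sigma$.

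Granting the claim, iterating over the $n$ positions reconstructs $\word{u}$ symbol by symbol; since the argument is inductive and at each step identifies the corresponding symbol of the genuine rank-$z$ representative, the procedure outputs a bona fide bracelet representative and no dead end can occur. The cost is $O(n \cdot \log k)$ invocations of the ranking procedure, each running in $O(k^2 \cdot n^4)$ time, for a total of $O(n^5 \cdot k^2 \cdot \log k)$, as claimed.

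The crux of the argument, and the step I expect to require the most care, is the counting claim itself. To verify it I would use that the representative $\word{u}$ of rank $z$ satisfies $RB(\word{u}) = z$, that $\word{u} \geq \word{w}(\word{u}_i)$ forces $RB(\word{w}(\word{u}_i)) \leq z$, and that the strict inequality $\word{u} < \word{w}(y)$ for the successor $y$ of $\word{u}_i$, together with the fact that $\word{u}$ is itself one of the counted representatives, forces $RB(\word{w}(y)) \geq z + 1$; combined with monotonicity this pins $\word{u}_i$ down as the unique threshold crossing. A secondary technical point is that the ranking routine must accept the padded, generally non-representative words $\word{w}(x)$ as input, which is exactly what the preliminary reduction to the lexicographically largest necklace at most $\word{w}(x)$, invoked in the proof of Theorem~\ref{thm:ranking_complexity}, supplies.
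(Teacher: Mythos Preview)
Your proposal is correct and takes essentially the same approach as the paper: reconstruct the representative symbol by symbol, using binary search over $\Sigma$ at each position with the ranking routine of Theorem~\ref{thm:ranking_complexity} as a black box, yielding $O(n\log k)$ ranking calls and the stated bound. The paper's version pads with both the smallest and the largest symbol to bracket $z$, whereas you pad only with the smallest and locate the threshold, but this is a cosmetic difference; your monotonicity and threshold-crossing justification is in fact more explicit than the paper's.
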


\begin{proof}
The unranking process is done through a binary search using the ranking algorithm as a black box.
Let $\word{\alpha}$ be a word which is the bracelet representation of the $z^{th}$ bracelet.
The value of $\word{\alpha}$ is determined iteratively, starting with the first symbol and working forwards.
The first symbol of $\word{\alpha}$ is determined preforming a binary search over $\word{\alpha}$.
For $x \in \word{\alpha}$, the words $x 1^{n - 1}$ and $x k^{n - 1}$ are generated, where $1$ is the smallest symbol in $\Sigma$ and $k$ the largest.
If $RB(x 1^{n - 1}) \leq z \leq RB(x k^{n - 1})$, then the first symbol of $\word{\alpha}$ is $x$, otherwise the new value of $x$ is chosen by standard binary search, being greater than $x$ if $z > RB(x k^{n - 1})$ and less than $x$ if $z < RB(x 1^{n - 1})$.
The $i^{th}$ symbol of $\word{\alpha}$ is done in a similar manner, generating the words $\word{\alpha}_{[1,i-1]} x 1^{n - i - 1}$ and $\word{\alpha}_{[1,i-1]} x k^{n - i - 1}$, converting $\word{\alpha}_{[1,i-1]} x 1^{n - i - 1}$ to a necklace representation using Algorithm 1 due to Sawada and Williams \cite{Sawada2017}.
Repeating this for all $n$ symbols leaves $\word{\alpha}$ as being the bracelet representation of the $z^{th}$ smallest bracelet, i.e. the bracelet with $z - 1$ smaller bracelets.
As the binary search will take $\log(k)$ operations for each of the $n$ symbols, requiring $O(k^2 \cdot n^4)$ time to rank for each symbol at each position.
Therefore the total complexity is $O(n^5 \cdot k^2 \cdot \log(k))$ time.
\end{proof}

\section{Computing the rank $RP(\word{v})$}
\label{sec:Ptv}

To rank palindromic necklaces, it is crucial to analyse 
their combinatorial properties.
This section focuses on providing results on determining unique words representing palindromic necklaces.
We study two cases depending on whether the length $n$ of a palindromic necklace is even or odd.
%
The reason for this division can be seen by considering examples of palindromic necklaces.
If equivalence under the rotation operation is not taken into account, then a word is palindromic if $\word{w} = \word{w}^R$.
If the length $n$ of $\word{w}$ is odd, then if $\word{w} = \word{w}^R$, $\word{w}$ can be written as $\word{\phi} x \word{\phi}^R$, where $\word{\phi} \in \Sigma^{(n - 1)/ 2}$ and $x \in \Sigma$.
For example, the word $aaabaaa$ is equal to $\word{\phi} x \word{\phi}^R$, where $\word{\phi} = aaa$ and $x = b$.
If the length $n$ of $w$ is even, then if $\word{w} = \word{w}^R$, $\word{w}$ can be written as $\word{\psi}\word{\psi}^R$, where $\word{\psi} \in \Sigma^{n/2}$.
For example the word $aabbaa$ is equal to $\word{\psi} \word{\psi}^R$, where $\word{\psi} = aab$.

Once rotations are taken into account, the characterisation of palindromic necklaces becomes more difficult.
It is clear that any necklace $\necklace{a}$ that contains a word of the form $\word{\phi} x \word{\phi}^R$ or $\word{\phi} \word{\phi}^R$ is palindromic.
However this check does not capture every palindromic necklace.
Let us take, for example, the necklace $\necklace{a} = ababab$, which contains two words $ababab$ and $bababa$.
While $ababab$ can neither be written as $\word{\phi} x \word{\phi}^R$ nor $\word{\phi} \word{\phi}^R$, it is still palindromic as $\langle ababab^R\rangle = \langle bababa \rangle = ababab$.
Therefore a more extensive test is required.
As the structure of palindromic words without rotation is different depending on the length being either odd or even, it is reasonable to split the problem of determining the structure of palindromic necklaces into the cases of odd and even length.

The number of palindromic necklaces are counted by computing the number of these characterisations.
This is done by constructing trees containing every prefix of these characterisations.
As each vertex corresponds to the prefix of a word, the leaf nodes of these trees correspond to the words in the characterisations.
By partitioning the tree in an intelligent manner, the number of leaf nodes and therefore number of these characterisations can be computed.
In the odd case this corresponds directly to the number of palindromic necklaces, while in the even case a small transformation of these sets is needed.

\subsection{Odd Length Palindromic Necklaces}
\label{subsec:odd_length_palindromic}

Starting with the odd-length case, {Proposition \ref{prop:odd_phorm} shows that every palindromic necklace of odd length contains {\bf exactly one word} that can be written as $\word{\phi} x \word{\phi}^R$ where $\word{\phi} \in \Sigma^{(n - 1)/2}$ and $x \in \Sigma$.} 
This fact is used to rank the number of bracelets by constructing a tree representing every prefix of a word of the form $\word{\phi} x \word{\phi}^R$ that belongs to a bracelet greater than $\word{v}$.

\begin{proposition}
A necklace $\necklace{w}$ of odd length $n$ is palindromic if and only if there exists exactly one word $\word{u} = \word{\phi} x \word{\phi}^{R}$ such that $\word{v} \in \necklace{w}$, where $\word{\phi} \in \Sigma^{(n - 1)/2}$ and $x \in \Sigma$.
\label{prop:odd_phorm}
\end{proposition}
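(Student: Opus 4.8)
The plan is to prove the biconditional together with the uniqueness claim by exploiting the fact that a palindromic necklace is invariant under a single reflection of its underlying odd cycle, and that such a reflection has a unique fixed position (its ``center''). Throughout I use that a word of odd length is of the form $\word{\phi} x \word{\phi}^R$ exactly when it equals its own reversal, which is the characterisation of odd-length palindromes recorded just before the statement.

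First I would dispatch the easy direction. Suppose $\necklace{w}$ contains a word $\word{u} = \word{\phi} x \word{\phi}^R$. Since $(\word{\phi} x \word{\phi}^R)^R = \word{\phi} x \word{\phi}^R$, the word $\word{u}$ equals its own reversal, so $\word{u} = \word{u}^R \in \necklace{w}^R$. As $\word{u}$ then lies in both $\necklace{w}$ and $\necklace{w}^R$, and these are rotation-equivalence classes, they must coincide, so $\necklace{w} = \necklace{w}^R$ is palindromic.

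For the forward direction I first establish existence. If $\necklace{w}$ is palindromic, then, as already noted in the preliminaries, any $\word{w} \in \necklace{w}$ satisfies $\word{w}^R = \langle \word{w} \rangle_i$ for some shift $i$. A short computation from the definitions gives the reversal-of-rotation identity $(\langle \word{w} \rangle_r)^R = \langle \word{w}^R \rangle_{n-r} = \langle \word{w} \rangle_{i - r}$ (indices read modulo the period). Hence $\langle \word{w} \rangle_r$ is a palindrome precisely when $\langle \word{w} \rangle_{i-r} = \langle \word{w} \rangle_r$, i.e. when $2r \equiv i \pmod p$, where $p$ is the period of $\word{w}$. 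Since $p$ divides the odd number $n$, $p$ is odd, so $2$ is invertible modulo $p$ and this congruence has a solution $r$. The rotation $\langle \word{w} \rangle_r$ is then an odd-length palindrome, hence of the form $\word{\phi} x \word{\phi}^R$, giving at least one such word in $\necklace{w}$. Uniqueness follows from the same identity: if $\word{u} = \word{\phi} x \word{\phi}^R$ is such a palindrome and $\langle \word{u} \rangle_d$ is another one, then using $\word{u}^R = \word{u}$ we get $(\langle \word{u} \rangle_d)^R = \langle \word{u} \rangle_{-d}$, so being a palindrome forces $\langle \word{u} \rangle_{-d} = \langle \word{u} \rangle_d$, that is $2d \equiv 0 \pmod p$; oddness of $p$ yields $d \equiv 0 \pmod p$, so $\langle \word{u} \rangle_d = \word{u}$ as words. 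Thus every palindromic rotation coincides with $\word{u}$.

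The main obstacle I anticipate is the periodic case, because ``exactly one'' refers to distinct words in the set $\necklace{w}$, not to rotation indices: the argument must be run modulo the primitive period $p$ rather than modulo $n$, using that two shifts produce the same word iff they agree modulo $p$. The role of $n$ being odd is exactly to make $p$ odd and $2$ invertible modulo $p$, which simultaneously forces existence (the reflection axis of an odd cycle meets a unique vertex, the center symbol $x$) and uniqueness (there is no second admissible center). The only other fussy point is the index bookkeeping in the reversal-of-rotation identity, but that is routine once the convention for $\langle \cdot \rangle_r$ is fixed.
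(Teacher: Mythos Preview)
Your argument is correct. Both existence and uniqueness flow from the single reversal-of-rotation identity $(\langle \word{w}\rangle_r)^R=\langle \word{w}\rangle_{i-r}$, reducing ``palindromic rotation'' to the congruence $2r\equiv i \pmod{p}$, which has exactly one solution modulo the (odd) period $p$. This is a clean, unified treatment.

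The paper proceeds differently. For existence it uses a parity argument: the necklace has odd cardinality (equal to the odd period), and words with $\word{v}\neq\word{v}^R$ pair off with their reversals inside the class, so at least one fixed point $\word{v}=\word{v}^R$ must remain. For uniqueness it takes two hypothetical palindromic rotations and, via direct index chasing, shows the period divides $2r$; oddness of $n$ then forces the period to divide $r$, collapsing the two words. Your approach packages both halves into one modular computation and makes the role of ``$2$ is invertible mod $p$'' explicit, which is tidier; the paper's pairing argument for existence, on the other hand, is slightly more elementary in that it avoids the reversal-of-rotation identity altogether. Either way, the essential content---oddness of the period---is the same.
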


\begin{proof}
Let $\word{v} \in \necklace{w}$.
If $\word{v}$ is of the form $\word{\phi} x \word{\phi}^R$, then clearly we have that $\word{v} = \word{v}^R$.
In the other direction, for the sake of contradiction assume $\necklace{w}$ is a palindromic necklace of odd length $n$ such that no word $\word{v} \in \necklace{w}$ is of the form $\word{\phi} x \word{\phi}^R$.
Note that the cardinality of $\necklace{w}$ is equal the period of the words in $\necklace{w}$.
As the length of the words in $\necklace{w}$ is odd, so to must be the length of the period.
Given a word $\word{v} \in \necklace{w}$, if $\word{v} \neq \word{v}^R$ then the size of $\necklace{w}$ is equal to $|\necklace{w} \setminus \{\word{v},\word{v}^R\}| + 2$.
As the size of $\necklace{w}$ is odd, there must be at least one word $\word{v} \in \necklace{w}$ where $\word{v} = \word{v}^R$.
For $\word{v} = \word{v}^R$, $\word{v}_1 = \word{v}_n, \word{v}_2 = \word{v}_{n - 1}, \hdots, \word{v}_{\frac{n - 1}{2}} = \word{v}_{\frac{n + 3}{2}}$.
Therefore this word can be expressed as $\word{\phi} x \word{\phi}^R$ where $\word{\phi} = \word{v}_{[1, (n - 1)/2]}$ and $x = \word{v}_{(n + 1)/2}$.

For the remainder of this proof $\word{u}_i$ is used to denote the character at position $(i \bmod n) + 1$ in the word $\word{u}$.
For the sake of contradiction, assume that there exists some pair of words $\word{u},\word{v} \in \necklace{w}$ such that $\word{u} \neq \word{v}$ and both $\word{u} = \word{u}^R$ and $\word{v} = \word{v}^R$.
As both $\word{u}$ and $\word{v}$ belong to the same necklace class, there must exist some rotation $r$ such that  $\langle \word{u} \rangle_{r} = \word{v}$.
Further, as $\word{v} = \word{v}^R$, $\langle \word{u} \rangle_{r} = \word{v}^R$.
Therefore, $\word{u}_{r + i} = \word{v}_i, \word{u}_{n - 1 - r + i} = \word{v}_i$, $\word{u}_{r + i} = \word{v}_{n - i - 1}$, and $\word{u}_{n - 1 - r + i} = \word{v}_{n - i - 1}$.
Further $\word{u}_{i} = \word{u}_{n - i + 1}$ and $\word{v}_{i} = \word{v}_{n - i + 1}$.
Therefore $\word{v}_{i} = \word{u}_{r + i} = \word{u}_{n - r - i + 1} = \word{v}_{2n - 2r - i - 1} = \word{v}_{n - (2n - 2r - i - 1) - 1} = \word{u}_{3r - n + i} = \word{u}_{3r + i}$.
Therefore $\word{u}_{i} = \word{u}_{2r + i}$ implying that $\word{v} = \langle \word{v} \rangle_{2r}$.
Therefore the period of $\word{u}$ must be equal to some common divisor of $2r$ and $n$.
As the length of $n$ is odd, the greatest divisor equals to $GCD(r,n)$.
As such the period must be a factor of $r$, meaning that $\word{u} = \langle\word{u}\rangle_{r} = \word{v}$, contradicting the assumption that $\word{u} \neq \word{v}$.
Therefore there is exactly one word in $\necklace{w}$ of the form $\word{\phi} x \word{\phi}^R$.
\end{proof}

\begin{corollary}
The number of palindromic necklaces of odd length $n$ over $\Sigma$ equals $k^{(n + 1)/2}$.
\label{prop:num_odd_necklaces}
\end{corollary}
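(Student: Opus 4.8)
The plan is to exhibit a bijection between the palindromic necklaces of odd length $n$ and the set of words of the form $\word{\phi} x \word{\phi}^R$, and then count the latter directly. Let $P$ denote the set of palindromic necklaces of odd length $n$ over $\Sigma$, and let
\[
W = \{\word{\phi} x \word{\phi}^R : \word{\phi} \in \Sigma^{(n-1)/2},\ x \in \Sigma\}.
\]
First I would count $W$. A word $\word{u} = \word{\phi} x \word{\phi}^R$ is completely determined by the pair $(\word{\phi}, x)$: the first $(n-1)/2$ symbols recover $\word{\phi}$, the middle symbol recovers $x$, and the final $(n-1)/2$ symbols are forced to be $\word{\phi}^R$. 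Conversely distinct pairs give distinct words, so $|W| = k^{(n-1)/2}\cdot k = k^{(n+1)/2}$.

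Next I would introduce the map $\Phi : W \to P$ sending each word $\word{u} \in W$ to the necklace containing it. This map is well defined because any $\word{u} = \word{\phi} x \word{\phi}^R$ satisfies $\word{u} = \word{u}^R$, so the necklace of $\word{u}$ is equal to its own reversal and is therefore palindromic, hence lies in $P$. The heart of the argument is then to show $\Phi$ is a bijection, and for this I would invoke Proposition \ref{prop:odd_phorm}. Surjectivity is immediate: every palindromic necklace of odd length contains at least one word of the form $\word{\phi} x \word{\phi}^R$ by the proposition, so it lies in the image of $\Phi$. Injectivity follows from the uniqueness clause of the same proposition: if $\word{u}, \word{u}' \in W$ map to the same necklace, then that necklace contains two words of the form $\word{\phi} x \word{\phi}^R$ unless $\word{u} = \word{u}'$; since the proposition guarantees \emph{exactly one} such word, we must have $\word{u} = \word{u}'$.

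Combining the bijection with the count yields $|P| = |W| = k^{(n+1)/2}$, which is the claim. This is essentially a bookkeeping corollary, so I do not expect a genuine obstacle. The only point demanding care is to read Proposition \ref{prop:odd_phorm} correctly: it is the uniqueness half of the statement (each palindromic necklace contains exactly one word of the prescribed form), rather than mere existence, that supplies injectivity of $\Phi$ and hence guarantees that no palindromic necklace is counted more than once.
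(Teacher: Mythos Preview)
Your proof is correct and follows essentially the same approach as the paper: both invoke Proposition~\ref{prop:odd_phorm} to set up a bijection between palindromic necklaces of odd length $n$ and words of the form $\word{\phi} x \word{\phi}^R$, then count the latter as $k^{(n-1)/2}\cdot k = k^{(n+1)/2}$. Your version is slightly more explicit in spelling out the map $\Phi$ and separately verifying well-definedness, surjectivity, and injectivity, but the underlying argument is identical.
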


\begin{proof}
It follows from Proposition \ref{prop:odd_phorm} that for every palindromic necklace $\necklace{w}$ of length $n$, there exists exactly one word $\word{\phi} \in \Sigma^*$ and symbol $x \in \Sigma$ such that $\word{\phi} x \word{\phi}^R$.
Hence, the number of palindromic necklaces equals the number of words of the form $\word{\phi} x \word{\phi}^R$ with length $n$.
Note that for the length of $\word{\phi} x \word{\phi}^R$ to be $n$, the length of $\word{\phi}$ must be $\frac{n - 1}{2}$.
Therefore the number of values of $\word{\phi}$ is $k^{(n - 1)/2}$.
As there are $k$ values of $x$, the number of values of $\word{\phi} x \word{\phi}^R$ is $k^{(n + 1)/2}$.
\end{proof}

\noindent
The problem now becomes to rank a word $\word{v}$ with respect to the odd length palindromic necklaces utilising their combinatorial properties.
Let $\word{v} \in \Sigma^n$ be a word of odd length $n$.
We define the set $\mathcal{PO}(\word{v})$, where $\mathcal{PO}$ stands for palindromic odd length.
The set $\mathcal{PO}(\word{v})$ contains one word representing each palindromic bracelet of odd length $n$ that is greater than $\word{v}$.
\begin{align*}
\mathcal{PO}(\word{v}) := \Big\{ \word{w} \in \Sigma^n: \word{w} = \word{\phi} x \word{\phi}^R, ~\text{where}~ \langle \word{w} \rangle > \word{v}, ~ \word{\phi} \in \Sigma^{(n - 1)/2}, ~x \in \Sigma\Big\}.
\end{align*}
\noindent
As each word will correspond to a unique palindromic necklace of length $n$ greater than $\word{v}$, and every palindromic necklace greater than $\word{v}$ will correspond to a word in $\mathcal{PO}(\word{v})$, the number of palindromic necklaces greater than $\word{v}$ is equal to $|\mathcal{PO}(\word{v})|$.
Using this set the number of necklaces less than $\word{v}$ can be counted by subtracting the size of $\mathcal{PO}(\word{v})$ from the total number of odd length palindromic necklaces, equal to $k^{(n + 1)/2}$ (Corollary~\ref{prop:num_odd_necklaces}).

\begin{figure}
    \centering
    \begin{tabular}{l l}
        \includegraphics[scale=0.7]{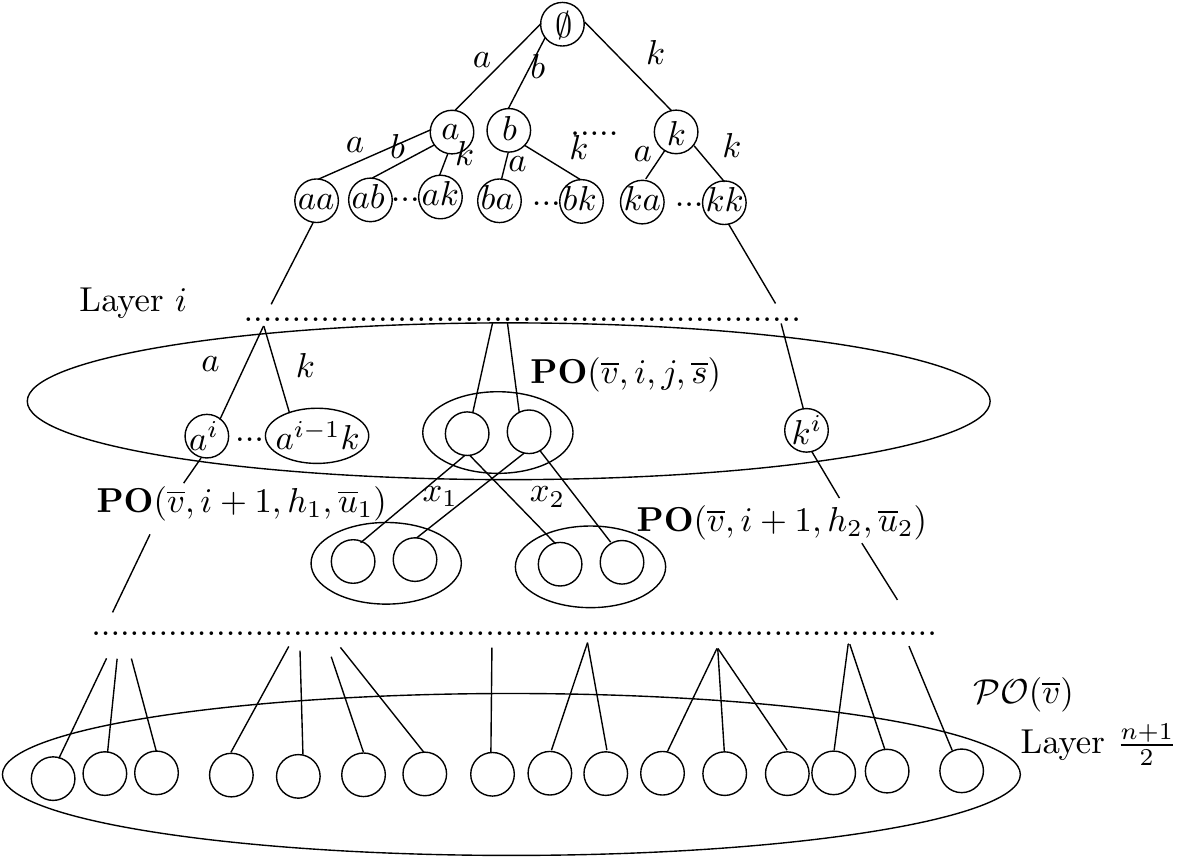} & \includegraphics[scale=0.4]{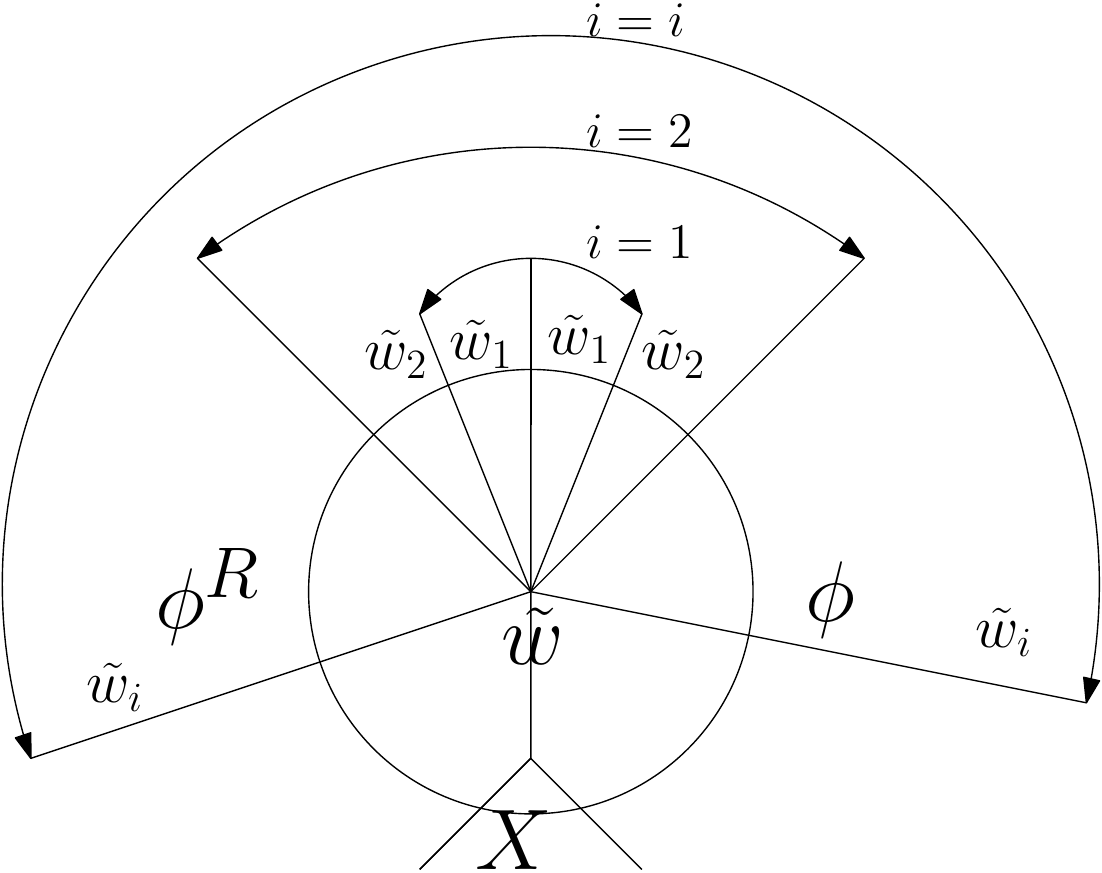}
    \end{tabular}
    \caption{(Left) The relationship between $\mathbf{PO}(\word{v},i,j,\word{s})$ with the tree $\mathcal{TO}(\word{v})$ and $\mathbf{PO}(\word{v})$. (right) Example of the order for which characters are assigned. Note that at each step the choices for the symbol $\necklace{w}_i$ is constrained in the no subword of $\necklace{w}_{[1,i]} \necklace{w}_{[1,i]}^R$ is greater than or equal to $\word{v}$.
    }
    \label{fig:PO_relationship}
\end{figure}

\noindent
\textbf{High level idea for the Odd Case.}
Here we provide a high level idea for the approach we follow for computing $\mathcal{PO}(\word{v})$.
Let $\word{v}$ have a length $n$. 
Since $\mathcal{PO}(\word{v})$ only contains words of the form $\word{\phi} x \word{\phi}^R$, where $\word{\phi} \in \Sigma^{(n - 1)/2}$ and $x \in \Sigma$, 
we have that $\word{w}_i = \word{w}_{n - i}$ for every $\word{w} \in \mathcal{PO}(\word{v})$.As the lexicographically smallest rotation of every $\word{w} \in \mathcal{PO}(\word{v})$ must be greater than $\word{v}$, it follows that any word rotation of $\word{w}$ must be greater than $\word{v}$ and therefore every subword of $\word{w}$ must also be greater than or equal to the prefix of $\word{v}$ of the same length.
This property is used to compute the size of $\mathcal{PO}(\word{v})$ by iteratively considering the set of prefixes of each word in $\mathcal{PO}(\word{v})$ in increasing length representing them with the tree $\mathcal{TO}(\word{v})$.
As generating $\mathcal{TO}(\word{v})$ directly would require an exponential number of operations, a more sophisticated approach is needed for the calculation of $|\mathcal{PO}(\word{v})|$ based on partial information.

As the tree $\mathcal{TO}(\word{v})$ is a tree of prefixes, vertices in $\mathcal{TO}(\word{v})$ are referred to by the prefix they represent.
So $\word{u} \in \mathcal{TO}(\word{v})$ refers to the unique vertex in $\mathcal{TO}(\word{v})$ representing $\word{u}$.
The root vertex of $\mathcal{TO}(\word{v})$ corresponds to the empty word.
Every other vertex $\word{u} \in \mathcal{TO}(\word{v})$ corresponds to a word of length $i$, where $i$ is the distance between $\word{u}$ and the root vertex.
Given two vertices $\word{p},\word{c} \in \mathcal{TO}(\word{v})$, $\word{p}$ is the parent vertex of a child vertex $\word{c}$ if and only if $\word{c} = \word{p} x$ for some symbol $x \in \Sigma$.
The $i^{th}$ layer of $\mathcal{TO}(\word{v})$ refers to all representing words of length $i$ in $\mathcal{TO}(\word{v})$ .
The size of $\mathcal{PO}(\word{v})$ is equivalent to the number of unique prefixes of length $\frac{n + 1}{2}$ of words of the palindromic form $\word{\phi} x \word{\phi}^R $ in $\mathcal{PO}(\word{v})$.
This set of prefixes corresponds to the vertices in the layer $\frac{n + 1}{2}$ of $\mathcal{TO}(\word{v})$.
Therefore the maximum depth of $\mathcal{TO}(\word{v})$ is $\frac{n + 1}{2}$.

To speed up computation, each layer of $\mathcal{TO}(\word{v})$ is partitioned into sets that allow the size of $\mathcal{PO}(\word{v})$ to be efficiently computed.
This partition is chosen such that the size of the sets in layer $i + 1$ can be easily derived from the size of the sets in layer $i$.
As these sets are tied to the tree structure, the obvious property to use is the number of children each vertex has.
As each vertex $\word{u} \in \mathcal{TO}(\word{v})$ represents a prefix of some word $\word{w} \in \mathcal{PO}(\word{v})$, the number of children of $\word{u}$ is the number of symbols $x \in \Sigma$ such that $\word{u} x$ is a prefix of some word in $\mathcal{PO}(\word{v})$.
Recall that every word in $\word{w} \in \mathcal{PO}(\word{v})$ has the form $\word{\phi} x \word{\phi}^R$, and that there is no subword of $\word{w}$ that is less than $\word{v}$.
Therefore if $\word{u} \in \mathcal{TO}(\word{v})$, there must be no subword of $\word{u}^R \word{u}$ that is less than $\word{v}$.
Hence the number of children of $\word{u}$ is the number of symbols $x \in \Sigma$ such that no subword of $x \word{u}^R \word{u} x$ is less than the prefix of $\word{v}$ of the same length.
As $\word{u}^R \word{u}$ has no subword less than $\word{v}$, $x \word{u}^R \word{u} x$ will only have a subword that is less than $\word{v}$ if either (1) $x \word{u}^R \word{u} x < \word{v}$ or (2) there exists some suffix of length $j$ such that $(\word{u}^R \word{u})_{[2i - j,2i]} = \word{v}_{[1,j]}$ and $x < \word{v}_{j + 1}$.
For the first condition, let $\word{s} \sqsubseteq_{2i} \word{v}$.
By the definition of strictly bounding subwords (Definition \ref{def:bounding}), $x \word{u}^R \word{u} x < \word{v}$ if and only if $x \word{s} x < \word{v}$.
Note that this ignores any word $\word{u}$ where $\word{u}^R \word{u} \sqsubseteq \word{v}$.
The restriction to strictly bounded words is to avoid the added complexity caused by Proposition \ref{prop:complexity_XW}, where the word that bounds $x \word{s} x$ might not be the word that bounds $x \word{u}^R \word{u} x$.
For the second property, let $j$ be the length of the longest suffix of $\word{u}^R \word{u}$ that is a prefix of $\word{v}$.
From Lemma 1 due to Sawada and Williams \cite{Sawada2017}, there is some suffix of $\word{u}^R \word{u} x$ that is smaller than $\word{v}$ if and only if $x < \word{v}_{j + 1}$.
The $i^{th}$ layer of $\mathcal{TO}(\word{v})$ is partitioned into $n^2$ sets  $\mathbf{PO}(\word{v},i,j,\word{s})$, for every $i \in [\frac{n + 1}{2}], j \in [2i]$ and $\word{s} \sqsubseteq_{2i} \word{v}$.

\begin{definition}
Let $i \in [\frac{n + 1}{2}], j \in [2i]$ and $\word{s} \sqsubseteq_{2i} \word{v}$.
The set $\mathbf{PO}(\word{v},i,j,\word{s})$ contains every prefix $\word{u} \in \mathcal{TO}(\word{v})$ of length $i$ where (1) the longest suffix of $\word{u}_{[1,i]}^R \word{u}_{[1,i]}$  which is a prefix of $\word{v}$ has a length of $j$ and (2) The word $\word{u}_{[1,i]}^R \word{u}_{[1,i]}$ is strictly bounded by $\word{s}$.
\end{definition}

An overview of the properties used by $\mathbf{PO}(\word{v},i,j,\word{s})$ is given in Figures \ref{fig:PO_relationship} and \ref{fig:word_properties_visual}.
It follows from the earlier observations that each vertex in $\mathbf{PO}(\word{v},i,j,\word{s})$ has the same number of children.
Lemma \ref{lem:PO_cartesian} strengthens this observation, showing that given $\word{a},\word{b} \in \mathbf{PO}(\word{v},i,j,\word{s})$, $\word{a} x \in \mathbf{PO}(\word{v},i + 1,j',\word{s}')$ if and only if $\word{b} x \in \mathbf{PO}(\word{v}, i + 1, j', \word{s}')$.

\begin{figure}
    \centering
    \includegraphics[scale=0.7]{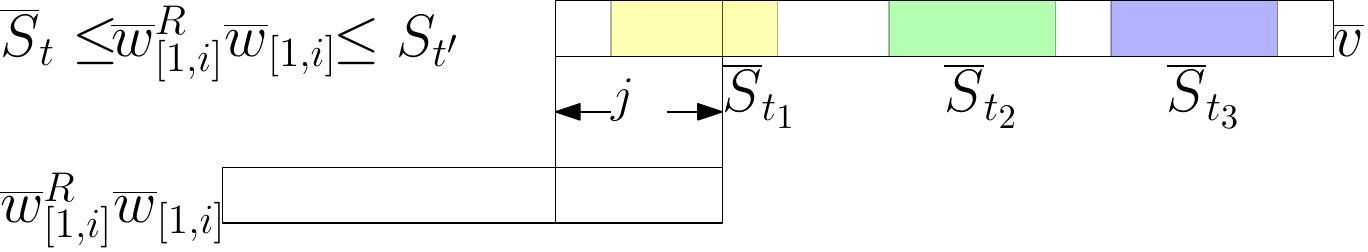}
    \caption{Visual representation of the properties of $\word{w}^R_{[1,i]} \word{w}_{[1,i]} \in \mathbf{PO}(\word{v},i,j,\word{s})$.}
    \label{fig:word_properties_visual}
\end{figure}

The remainder of this section establishes how to count the size of $\mathbf{PO}(\word{v},i,j,\word{s})$ and the number of children vertices for each vertex in $\mathbf{PO}(\word{v},i,j,\word{s})$.
The first step is to formally prove that all vertices in $\mathbf{PO}(\word{v},i,j,\word{s})$ have the same number of children vertices.
This is shown in Lemma \ref{lem:PO_cartesian} by proving that given two vertices $\word{a},\word{b} \in \mathbf{PO}(\word{v},i,j,\word{s})$, if the vertex $\word{a}'= \word{a} x$ for $x \in \Sigma$ belongs to the set $\mathbf{PO}(\word{v},i + 1, j', \word{s}')$, so to does $\word{b}' = \word{b} x$.

\begin{lemma}
Let $\word{a},\word{b} \in  \mathbf{PO}(\word{v},i,j,\word{s})$ and let $x \in \Sigma$.
If the vertex $\word{a}' = \word{a} x$ belongs to $\mathbf{PO}(\word{v}, i + 1, j', \word{s}')$, the vertex $\word{b}' = \word{b} x$ also belongs to $\mathbf{PO}(\word{v}, i + 1, j', \word{s}')$.
Furthermore the value of $j'$ and $\word{s}'$ can be computed in constant time from the values of $j,\word{s}$ and $x$.
\label{lem:PO_cartesian}
\end{lemma}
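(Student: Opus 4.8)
The plan is to prove the statement by unwinding the two defining properties of the sets $\mathbf{PO}(\word{v},i,j,\word{s})$ and showing that each transitions in a way that depends only on the local data $(j,\word{s},x)$, not on the full vertex $\word{a}$ or $\word{b}$. Since $\word{a},\word{b} \in \mathbf{PO}(\word{v},i,j,\word{s})$, both $\word{a}^R\word{a}$ and $\word{b}^R\word{b}$ share the same longest suffix-prefix-of-$\word{v}$ length $j$, and both are strictly bounded by the same subword $\word{s}\sqsubseteq_{2i}\word{v}$. Appending $x$ to a vertex $\word{u}$ of length $i$ produces a vertex $\word{u}x$ of length $i+1$; the associated palindromic word changes from $\word{u}^R\word{u}$ to $(\word{u}x)^R(\word{u}x) = x\,\word{u}^R\word{u}\,x$. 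I would argue the two coordinates $j'$ and $\word{s}'$ separately.

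For the new bounding subword $\word{s}'$, the key is to reduce the behaviour of $x\,\word{u}^R\word{u}\,x$ to the behaviour of $x\,\word{s}\,x$. First I would invoke Lemma \ref{lem:bounding_same_xw}: since $\word{s}$ bounds $\word{u}^R\word{u}$, the subword bounding $x\,\word{u}^R\word{u}$ is the same as the subword bounding $x\,\word{s}$, and this depends only on $\word{s}$ and $x$ (retrievable from the precomputed array $XW$). Then I would append $x$ on the right and apply Lemma \ref{lem:bounding_same_wx} together with Lemma \ref{lem:wx_part_one}: because $x\,\word{u}^R\word{u}$ and $x\,\word{s}$ are strictly bounded by the same subword, the subword bounding $x\,\word{u}^R\word{u}\,x$ equals the subword bounding $x\,\word{s}\,x$ (retrievable from $WX$). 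Thus $\word{s}'$ is determined entirely by $\word{s}$ and $x$, is identical for $\word{a}$ and $\word{b}$, and is obtained by two constant-time array lookups.

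For the new suffix-prefix length $j'$, the relevant suffix of the extended word $x\,\word{u}^R\word{u}\,x$ that matches a prefix of $\word{v}$ is governed by the standard failure-function / longest-border update used when extending a matched prefix by a single symbol. The previous match length is $j$, and appending $x$ either extends the match to $j+1$ when $\word{v}_{j+1}=x$, or else drops to the length determined by the border structure of $\word{v}_{[1,j]}$ followed by $x$ — a quantity depending only on $j$, $x$, and the fixed word $\word{v}$. Since both $\word{a}^R\word{a}$ and $\word{b}^R\word{b}$ have the same matched length $j$, the updated value $j'$ is the same for both and computable in constant time with a precomputed KMP-style table on $\word{v}$. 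Combining the two coordinates shows $\word{a}x$ and $\word{b}x$ land in the same set $\mathbf{PO}(\word{v},i+1,j',\word{s}')$, with $(j',\word{s}')$ computed in constant time.

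The main obstacle I anticipate is the $\word{s}'$ coordinate: one must be careful that strict bounding is preserved through both the left- and right-append, which is exactly why the restriction to strictly bounded words was imposed in the definition (as the text flags, without strict bounding the subword bounding $x\,\word{s}\,x$ may differ from the one bounding $x\,\word{u}^R\word{u}\,x$). The cleanest route is to chain Lemmas \ref{lem:bounding_same_xw}, \ref{lem:wx_part_one}, and \ref{lem:bounding_same_wx} in the correct order rather than reasoning about $x\,\word{u}^R\word{u}\,x$ directly, and to double-check the edge case where the right-append forces $\word{s}' = \word{s}''y$ with $y>x$ in Lemma \ref{lem:wx_part_one}, confirming this alternative is also independent of the choice between $\word{a}$ and $\word{b}$.
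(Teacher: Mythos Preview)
Your proposal is correct and follows essentially the same decomposition as the paper: treat the coordinates $j'$ and $\word{s}'$ separately, derive $\word{s}'$ from the bounding lemmas (Lemmas~\ref{lem:bounding_same_xw}--\ref{lem:bounding_same_wx}) applied to $x\,\word{u}^R\word{u}\,x$, and derive $j'$ from the suffix--prefix match after appending $x$. Two minor differences are worth flagging. First, you compose the array lookups in the order $XW$ then $WX$ (prepend $x$, then append $x$), whereas the paper does $WX$ then $XW$; both compute the strict bound of $x\,\word{s}\,x$ and are equally valid. Second, for $j'$ you invoke a full KMP failure-function update, while the paper uses the simpler dichotomy $j' = j+1$ if $x = \word{v}_{j+1}$ and $j' = 0$ if $x > \word{v}_{j+1}$, citing Lemma~1 of Sawada--Williams. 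That shortcut is available precisely because $\word{v}$ is assumed to be a necklace representative: every proper border length $m$ of $\word{v}_{[1,j]}$ satisfies $\word{v}_{m+1} \le \word{v}_{j+1}$, so once $x > \word{v}_{j+1}$ no shorter border can extend. Your KMP route is more general and still constant-time with preprocessing; the paper's route is slightly cleaner given the standing assumption and avoids building a failure table.
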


\begin{proof}
By the definition of the set $\mathbf{PO}(\word{v},i,j,\word{s})$, the last $j$ symbols of $\word{a}^R \word{a}$ and $\word{b}^R \word{b}$ are equal to $\word{v}_{[1,j']}$.
Therefore if $j' > 0$, $x$ must be equal to $\word{v}_{j + 1}$, satisfying this observation.
On the other hand, if $j' = 0$ then $x$ must be greater than $\word{v}_{j + 1}$.
Following Lemmas \ref{lem:bounding_same_wx} and \ref{lem:bounding_same_xw}, if $\word{s}'$ bounds $x \word{a}^R \word{a} x$ and $\word{s}$ bounds both $\word{a}^R \word{a}$ and $\word{b}$, then $\word{s}'$ also bounds $x \word{b}^R \word{b} x$.
Hence $\word{b}'$ must also belong to $\mathbf{PO}(\word{v}, i + 1, j', \word{s}')$.

To compute the value of $j'$ and $\word{s}'$ in constant time, assume that the arrays $XW$ and $WX$ as defined in Section \ref{subsec:bounding_subwords}.
Note that if $x < \word{v}_{j + 1}$, there is no such value of $j'$ or $\word{s}'$ as the suffix of $x \word{a}^R \word{a} x$ of length $j + 1$ is smaller than $\word{v}$, contradicting the definition of the set.
If $x = \word{v}_{j + 1}$ then the value of $j'$ must be $j + 1$.
Otherwise, the value of $j'$ is $0$ following Lemma 1 of Sawada and Williams \cite{Sawada2017}.
The value $\word{s}'$ can be derived using $WX$ and $XW$ by finding the word $\word{u} = WX[\word{s},x]$ that bounds $\word{s} x$, then $\word{s}' = XW[\word{u}, x]$ that bounds $x \word{u}$.
Therefore the value of $j'$ and $\word{s}'$ can be computed in constant time.
\end{proof}

\noindent
\textbf{Computing the size of $\mathbf{PO}(\word{v},i,j',\word{s}')$.}
Lemma \ref{lem:PO_cartesian}, provides enough information to compute the size of $\mathbf{PO}(\word{v},i,j',\word{s}')$ once the size of $\mathbf{PO}(\word{v},i - 1, j,\word{s})$ has been computed for each value of $j \in [2(i - 1)]$ and $\word{s} \in \mathbf{S}(\word{v},2(i - 1))$.
At a high level, the idea is to create an array, $SizePO$, storing the size of the $\mathbf{PO}(\word{v},i,j',\word{s}')$  for every value of $i \in [\frac{n - 1}{2}], j \in [2i]$ and $\word{s} \sqsubseteq_{2i} \word{v}$.
For simplicity, let the value of $SizePO[i,j,\word{s}]$ be the size of $|\mathbf{PO}(\word{v},i,j,\word{s})|$.

Lemma \ref{lem:SizeOfPO(i,j,s)} formally provides the method of computing $SizePO[i,j,\word{s}]$ for every  $j \in [2i]$ and $\word{s} \sqsubseteq_{2i} \word{v}$ once $SizePO[i - 1,j',\word{s}']$ has been computed for every $j' \in [2i - 2]$ and $\word{s} \sqsubseteq_{2i - 2} \word{v}$.
Observe that each vertex $a \in \mathbf{PO}(\word{v},i,j',\word{s}')$ represents a prefix $\word{a}' x$ where $\word{a}'$ is either in $\mathbf{PO}(\word{v},i - 1, j,\word{s})$, for some value of $j$ and $\word{s}$, or $\word{a}' \sqsubseteq \word{v}$.
Using this, the high level idea is to derive the values of $j'$ and $\word{s}'$ for each $j \in [2(i - 1)], \word{s} \in \mathbf{S}(\word{v},2(i -1 ))$ and $x \in \Sigma$.
Once the values $j'$ and $\word{s}'$ have been derived, the value of $SizePO[i,j',\word{s}']$ is increased by the size of $\mathbf{PO}(\word{v},i-1,j,\word{s})$.
Repeating this for every value of $j,\word{s}$ and $x$ will leave the value of $SizePO[i,j',\word{s}']$ as the number of vertices in $\mathbf{PO}(\word{v},i,j',\word{s}')$ representing words of the form $\word{a} x$ where $\word{a} \not\sqsubseteq \word{v}$.
As each set $\mathbf{PO}(\word{v},i,j,\word{s})$ may have children in at most $k$ sets $\mathbf{PO}(\word{v},i + 1,j',\word{s}')$, the number of vertices in $\mathbf{PO}(\word{v},i + 1,j',\word{s}')$ with a parent vertex in $\mathbf{PO}(\word{v},i,j,\word{s})$ can be computed in $O(k \cdot n^2)$ by looking at every argument of $j \in [2i]$ and $\word{s} \sqsubseteq_{2i} \word{v}$.

To account for the vertices in $\mathbf{PO}(\word{v},i,j',\word{s}')$ of the form  $\word{b} x$ where $\word{b}^R \word{b} \sqsubseteq \word{v}$, a similar process is applied to each pair $\word{s} \in \mathbf{S}(\word{v},2(i - 1))$ and $x \in \Sigma$.
For each pair, the values $\word{s}'$ and $j'$ are derived in the same manner as Lemma \ref{lem:PO_cartesian} utilising the tables $XW$ and $WX$.
Once derived, the value of $SizePO[i,j',\word{s}']$ is increased by one, to account for the vertex $\word{s} x$.
As the values of $j'$ and $\word{s}'$ can be computed in $O(n)$ time from the value of $x$ and $\word{s}$, the number of vertices in $\mathbf{PO}(\word{v},i + 1,j',\word{s}')$ where the parent vertex is a subword of $\word{v}$ can be computed in $O(k \cdot n^2)$ time.

\begin{lemma}
\label{lem:SizeOfPO(i,j,s)}
Given the size of $\mathbf{PO}(\word{v},i,j,\word{s})$ for $i \in \left[\frac{n - 3}{2}\right]$ and every $j \in [2i],\word{s} \sqsubseteq_{2i} \word{v}$, the size of $\mathbf{PO}(\word{v},i + 1,j',\word{s}')$ for every $j' \in [2i + 2],\word{s}' \sqsubseteq_{2i + 2} \word{v}$ can be computed in $O(k \cdot n^2)$ time.
\end{lemma}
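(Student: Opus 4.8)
The plan is to turn the structural fact of Lemma \ref{lem:PO_cartesian} into a dynamic-programming recurrence on the array $SizePO$, so that layer $i+1$ is populated from layer $i$ in a single pass. First I would initialise $SizePO[i+1,j',\word{s}']$ to $0$ for all $j' \in [2i+2]$ and $\word{s}' \sqsubseteq_{2i+2} \word{v}$. Then, for each part indexed by $j \in [2i]$ and $\word{s} \in \mathbf{S}(\word{v},2i)$, and for each symbol $x \in \Sigma$, I would invoke Lemma \ref{lem:PO_cartesian} together with the precomputed tables $XW$ and $WX$ to obtain, in constant time, the unique target indices $j'$ and $\word{s}'$ for which $\word{u}x \in \mathbf{PO}(\word{v},i+1,j',\word{s}')$ holds simultaneously for \emph{every} $\word{u} \in \mathbf{PO}(\word{v},i,j,\word{s})$. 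Because the whole part moves to the same child part under a fixed $x$, adding $SizePO[i,j,\word{s}]$ to $SizePO[i+1,j',\word{s}']$ records the entire contribution of that part in one step.

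The second step accounts for the children the first pass cannot see. By definition $\mathbf{PO}(\word{v},i,j,\word{s})$ collects only prefixes $\word{u}$ whose doubling $\word{u}^R\word{u}$ is \emph{strictly} bounded by a subword of $\word{v}$; this omits exactly those prefixes $\word{b}$ of length $i$ with $\word{b}^R\word{b} \sqsubseteq \word{v}$, since then $\word{b}^R\word{b}$ equals rather than exceeds its bounding subword. Each such $\word{b}$ is determined by the (necessarily palindromic) subword $\word{s}=\word{b}^R\word{b} \in \mathbf{S}(\word{v},2i)$, and its children $\word{b}x$ must still be tallied. I would therefore iterate over every $\word{s} \in \mathbf{S}(\word{v},2i)$ of the form $\word{b}^R\word{b}$ together with every $x \in \Sigma$, derive $j'$ and $\word{s}'$ exactly as in Lemma \ref{lem:PO_cartesian}, and increase $SizePO[i+1,j',\word{s}']$ by one for each admissible child. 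After both passes, $SizePO[i+1,j',\word{s}']$ equals $|\mathbf{PO}(\word{v},i+1,j',\word{s}')|$ for all $j'$ and $\word{s}'$.

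For the running time, there are $O(n)$ choices of $j$ and $O(n)$ values of $\word{s} \in \mathbf{S}(\word{v},2i)$, hence $O(n^2)$ parts; each is processed against $k$ symbols in $O(1)$ time using $XW$ and $WX$, so the first pass costs $O(k\cdot n^2)$. The boundary pass ranges over at most $n$ subwords and $k$ symbols, each handled in $O(n)$ time, which is again $O(k\cdot n^2)$. The total is therefore $O(k\cdot n^2)$, as claimed.

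I expect the crux to be the disjoint and exhaustive treatment of these two families of parents: the recurrence from Lemma \ref{lem:PO_cartesian} transports the strictly-bounded vertices cleanly, but the prefixes $\word{b}$ with $\word{b}^R\word{b}\sqsubseteq\word{v}$ belong to no part and must be injected by hand. The care needed is to confirm that each boundary child is counted exactly once, is assigned the correct $(j',\word{s}')$, and that appending $x$ introduces no subword smaller than the matching prefix of $\word{v}$ beyond the cases already excluded by Lemma \ref{lem:PO_cartesian}.
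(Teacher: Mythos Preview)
Your proposal is correct and follows essentially the same two-pass strategy as the paper: first propagate the strictly-bounded parts via Lemma~\ref{lem:PO_cartesian} and the tables $XW$, $WX$, then inject the boundary prefixes $\word{b}$ with $\word{b}^R\word{b}\sqsubseteq\word{v}$ by hand. The one detail you leave implicit under ``admissible child'' is the paper's explicit check that $x\word{s}x\not\sqsubseteq\word{v}$, which prevents a boundary vertex whose extension is still a subword of $\word{v}$ from being wrongly placed into a strictly-bounded part at layer $i+1$; making this condition explicit is what ensures the exactly-once accounting you flag at the end.
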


\begin{proof}
Assume that $WX$ and $XW$ have been precomputed.
Further assume that the array $SizePO$ has be initialised such that $SizePO[i,j,\word{s}] = |\mathbf{PO}(\word{v},i,j,\word{s})|$ for every value of $j \in [2i]$ and $\word{s} \in \mathbf{S}(\word{v},2i)$, and $SizePO[i + 1,j,\word{s}] = 0$ for every $j' \in [2i + 2]$, and $\word{s}' \in \mathbf{S}(\word{v},2i)$.

The first step is to count the number of vertices in $\mathbf{PO}(\word{v},i + 1,j',\word{s}')$ representing words of the form $\word{a} x$ where $\word{a} \not\sqsubseteq \word{v}$.
This is done by checking each $j \in [2i], \word{s} \in \mathbf{S}(\word{v}, 2i)$, and $x \in \Sigma$.
For each $j, \word{s}$ and $x$, the values $j'$ and $\word{s}'$ are derived in constant time as in Lemma \ref{lem:PO_cartesian}.
Following Lemma \ref{lem:PO_cartesian}, every vertex $\word{a} \in \mathbf{PO}(\word{v},i,j,\word{s})$ will have some child vertex in $\word{a}' \in \mathbf{PO}(\word{v},i + 1,j',\word{s}')$ such that the last symbol of the word $\word{a}'$ is equal to $x$.
Therefore the value of $SizePO[i + 1, j', \word{s}']$ is increased by the value of $SizePO[i,j,\word{s}]$.
Repeating this for every value of $j,\word{s}$ and $x$ will leave the value of $SizePO[i + 1, j', \word{s}']$ equal to the number of vertices in $\mathbf{PO}(\word{v},i + 1,j',\word{s}')$ of the form $\word{a} x$ where $\word{a} \not\sqsubseteq \word{v}$.
As there are $n$ possible value of both $j$ and $\word{s}$, and $k$ values of $x$, this process will take $O(n^2 \cdot k)$ operations.

To compute the number vertices in $\mathbf{PO}(\word{v},i + 1,j',\word{s}')$ of the form  $\word{b} x$ where $\word{b} \sqsubseteq \word{v}$, a similar process is applied to each pair $\word{s} \in \mathbf{S}(\word{v},2i)$ and $x \in \Sigma$.
Formally, for each pair of $\word{s}$ and $x$, the first step is to check that $\word{s} = \word{s}^R$.
This can be done in linear time by comparing the two strings.
This check ensures that new word will be palindromic.
The second check is that $x \word{s} x \not \subseteq \word{v}$.
This is to ensure that the new word is not counted in the next layer.
This can be done by finding the word $\word{s}'$ in the same manner as in Lemma \ref{lem:PO_cartesian}, and checking if the word $\word{u}'$ preceding $\word{s}'$ in the ordered set $\mathbf{S}(\word{s},2i + 2)$ is equal to $x \word{s} x$.
Let $j$ be the length of the longest suffix of $\word{s}$ that is a prefix of $\word{v}$.
The value of $j$ can be found in linear time by using a simple pattern matching algorithm on $\word{s}$ and recording the final state.
The value of $j'$ can be found form the value of $j$ and $x$ using Lemma \ref{lem:PO_cartesian} in constant time.
Once $j'$ and $\word{s}'$ have been derived, the value of $SizePO[i + 1, j', \word{s}']$ can be increased by 1.
As there are $n$ possible values of $\word{s}, k$ possible values of $x$, and at most $O(n)$ operations are required for each pair, this process will take $O(n^2 \cdot k)$ operations.
Therefore the total complexity is $O(n^2 \cdot k)$.
\end{proof}

Once the size of $\mathbf{PO}(\word{v},i,j,\word{s})$ has been computed for every $i \in [\frac{n-1}{2}],j\in[2i],\word{s}\in\mathbf{S}(\word{v},2i)$, the final step is to compute $|\mathcal{PO}(\word{v})|$.
The high level idea is to determine the number of vertices in $\mathcal{PO}(\word{v})$ are children of a vertex in $\mathbf{PO}(\word{v},\frac{n - 1}{2},j,\word{s})$.
The set $\mathbf{X}(\word{v},j,\word{s}) \subseteq \Sigma$ is introduced to help with this goal.
Let $\mathbf{X}(\word{v},j,\word{s})$ contain every symbol $x \in \Sigma$ such that $\word{a} x \word{a}^R \in \mathcal{PO}(v)$ where $\word{a} \in \mathbf{PO}(\word{v},\frac{n - 1}{2},j,\word{s})$.
By the definition of $\mathbf{X}(\word{v},j,\word{s})$, $|\mathbf{X}(\word{v},j,\word{s})| \cdot |\mathbf{PO}(\word{v},\frac{n - 1}{2},j,\word{s})|$ equals the number of words $\word{w} \in \mathcal{PO}(\word{v})$ where $(\word{w}_1 \hdots \word{w}_{(n - 1)/2}) \in \mathbf{PO}(\word{v},i,j,\word{s})$.
Lemma \ref{lem:Z(v,i,j)Computaiton} shows how to compute the size of $\mathbf{X}(\word{v},j,\word{s})$ in $O(k \cdot n)$ time.

\begin{lemma}
\label{lem:Z(v,i,j)Computaiton}
Let $\mathbf{X}(\word{v},j,\word{s})$ contain every symbol in $\Sigma$ such that $\word{a} x \word{a}^R \in \mathcal{PO}(v)$ where $\word{a} \in \mathbf{PO}(\word{v},\frac{n - 1}{2},j,\word{s})$.
The size of $\mathbf{X}(\word{v},j,\word{s})$ can be computed in $O(k \cdot n)$ time.
\end{lemma}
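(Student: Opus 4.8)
The plan is to reduce membership in $\mathbf{X}(\word{v},j,\word{s})$ to a constant number of tests that depend only on $j$, $\word{s}$, and $x$; this simultaneously shows the set is well defined (the analogue of Lemma~\ref{lem:PO_cartesian}) and that its size is cheap to compute. Fix any $\word{a} \in \mathbf{PO}(\word{v},\frac{n-1}{2},j,\word{s})$ and let $\word{w} = \word{a} x \word{a}^R$, a palindrome of length $n$ whose straddling subword (the length $n-1$ cyclic factor joining $\word{a}^R$ back to $\word{a}$) is exactly $\word{a}^R \word{a}$, which is strictly bounded by $\word{s}$ and has longest suffix-that-is-a-prefix-of-$\word{v}$ equal to $j$. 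Then $x \in \mathbf{X}(\word{v},j,\word{s})$ iff $\langle \word{w} \rangle > \word{v}$, i.e. every rotation of $\word{w}$ exceeds $\word{v}$. I would split the $n$ rotations into the central one, $x\word{a}^R\word{a}$, and the remaining $n-1$, each of the form (a suffix of $\word{a}^R\word{a}$)$\,x\,$(a prefix of $\word{a}^R\word{a}$).

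For the non-central rotations, the defining property of $\mathbf{PO}$ guarantees that $\word{a}^R\word{a}$ has no factor smaller than the corresponding prefix of $\word{v}$, so such a rotation begins with a suffix of $\word{a}^R\word{a}$ that is at least the matching prefix of $\word{v}$; it can drop below $\word{v}$ only when that suffix equals a prefix of $\word{v}$ and the following symbol $x$ is too small. Invoking Lemma~1 of Sawada and Williams \cite{Sawada2017} with the longest such suffix, which has length $j$, this is controlled by the single comparison $x \ge \word{v}_{j + 1}$ (Condition A).

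For the central rotation I would argue that $x\word{a}^R\word{a} \ge \word{v}$ holds automatically when $x > \word{v}_1$ and fails when $x < \word{v}_1$, while for $x = \word{v}_1$ it is equivalent to $\word{a}^R\word{a} \ge \word{v}_{[2,n]}$. The key observation is that $\word{v}_{[2,n]}$ is itself a length $n-1$ subword of $\word{v}$ while $\word{s}$ strictly bounds $\word{a}^R\word{a}$; hence comparing $\word{v}_{[2,n]}$ with $\word{s}$ settles it: if $\word{v}_{[2,n]} \le \word{s}$ then $\word{a}^R\word{a} > \word{v}_{[2,n]}$, and otherwise the bounding property forces $\word{a}^R\word{a} < \word{v}_{[2,n]}$ (Condition B). Moreover, because $\word{a}^R\word{a}$ is strictly bounded it is not a cyclic subword of $\word{v}$, so $\word{w}$ cannot lie in the necklace class of $\word{v}$; therefore ruling out rotations smaller than $\word{v}$ already yields the strict inequality $\langle \word{w}\rangle > \word{v}$, with no separate boundary correction. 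I expect this to be the main obstacle: establishing that the central rotation is decided by $\word{s}$ alone, and that strict bounding eliminates the $\word{w}$-equivalent-to-$\word{v}$ edge case.

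Since Conditions A and B depend only on $j$, $\word{s}$, and $x$, the set $\mathbf{X}(\word{v},j,\word{s})$ is well defined and $x$ lies in it exactly when both conditions hold. To compute $|\mathbf{X}(\word{v},j,\word{s})|$, I would first compare $\word{v}_{[2,n]}$ with $\word{s}$ once in $O(n)$ time, and then test each of the $k$ symbols against Conditions A and B in $O(1)$ time each; the comparison step dominates the per-symbol cost, giving the claimed $O(k \cdot n)$ bound.
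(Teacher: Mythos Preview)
Your decomposition into the central rotation $x\word{a}^R\word{a}$ and the non-central rotations is natural, but Condition~A as stated is not sufficient for the non-central rotations. You claim that a non-central rotation $\word{u}\,x\,\word{p}$ (with $\word{u}$ a suffix and $\word{p}$ a prefix of $\word{a}^R\word{a}$) can fall below $\word{v}$ only when $\word{u}$ equals a prefix of $\word{v}$ and $x$ is too small. This overlooks the case where $\word{u}x$ \emph{extends} the prefix match, i.e.\ $x=\word{v}_{|\word{u}|+1}$, and the drop occurs later inside $\word{p}$. In particular, when $x=\word{v}_{j+1}$ the critical rotation is $\word{v}_{[1,j]}\,x\,\word{p}$ with $\word{p}=(\word{a}^R\word{a})_{[1,\,n-1-j]}$, and one must still decide whether $\word{p}$ keeps pace with $\word{v}_{[j+2,n]}$ (and, if equality persists, with the cyclic continuation). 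Your Condition~B compares $\word{s}$ only with the fixed subword $\word{v}_{[2,n]}$, which is the wrong witness here.

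A concrete failure: take $n=5$, $\Sigma=\{a,b\}$, $\word{v}=aabab$, $\word{a}=ba$. Then $\word{a}^R\word{a}=abba$, $j=1$, and $\word{s}=abab$. With $x=a$ both of your conditions hold: $x=\word{v}_2$ gives Condition~A, and $x\word{a}^R\word{a}=aabba>\word{v}$ gives Condition~B. Yet $\word{w}=\word{a}\,x\,\word{a}^R=baaab$ has necklace representative $aaabb<\word{v}$, so $a\notin\mathbf{X}(\word{v},1,abab)$. The paper's test catches this because it compares $\word{v}_{[1,j]}\,x\,\word{s}$ with $\word{v}$ cyclically, which for $x=\word{v}_{j+1}$ amounts to checking $\word{s}\ge\word{v}_{[j+2,\,n+j]}$; here $\word{v}_{[3,6]}=baba>\word{s}=abab$, so the test correctly rejects $x=a$. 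The repair is to compare $\word{s}$ against the $j$-dependent cyclic subword $\word{v}_{[j+2,\,n+j]}$ rather than $\word{v}_{[2,n]}$; this is exactly the paper's criterion, still $O(n)$ per symbol, so the $O(k\cdot n)$ bound is unaffected.
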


\begin{proof}
The size of $\mathbf{X}(\word{v},j,\word{s})$ can be computed in a direct manner by checking if $x \in \mathbf{X}(\word{v},j,\word{s})$ for each $x \in \Sigma$.
Given some $x \in \Sigma$, note that if $x < \word{v}_{j + 1}$ then there exists some rotation of $\langle \word{a} x \word{a}^R \rangle$ that is smaller than $\word{v}$.
Let $x \geq \word{v}_{j + 1}$.
For $x$ to be a member of $\mathbf{X}(\word{v},j,\word{s})$ observe that for $\langle \word{a} x \word{a}^R \rangle$ to be greater than $\word{v}$, $\word{v}_{[1,j]} x \word{a}^R \word{a}$ must be greater than $v$.
Using the bound given by $\word{s}$ gives $\langle\word{a} x \word{a}^R \rangle > \word{v}_{[1,j]} x \word{s}$.
Therefore if $\word{v}_{[1,j]} x \word{s} \geq \word{v}$, $x \in \mathbf{X}(\word{v},j,\word{s})$.
In the other hand, if $\word{v}_{[1,j]} x \word{s} < \word{v}$, then note that $\word{s} < \word{v}_{[j + 2, n + j]}$.
Therefore $\word{a}^R \word{a} < \word{v}_{[j + 2, n + j]}$ as it is bounded by $\word{s}$.
Hence $\langle \word{a} x \word{a}^R \rangle < \word{v}$.
Therefore, $x \in \mathbf{X}(\word{v},j,\word{s})$ if and only if $\word{v}_{[1,j]}  x \word{s} \geq \word{v}$.
As this can be checked in $O(n)$ steps by directly comparing the two words, and there are $k$ values of $z$ to check, the total complexity is $O(k \cdot n)$.
\end{proof}

\noindent
\textbf{Converting $SizePO$ to $|\mathcal{PO}(\word{v})|$.}
The final step in computing $\mathcal{PO}(\word{v})$ is to convert the cardinality of $\mathbf{PO}(\word{v},i,j,\word{s})$ to the size of $\mathcal{PO}(\word{v})$.
Lemma \ref{lem:SizeOfPO} provides a formula for counting the size of $\mathcal{PO}(\word{v})$.
Combining this formula with the techniques given in Lemma \ref{lem:SizeOfPO(i,j,s)} an algorithm for computing the size of $\mathcal{PO}(\word{v})$ directly follows.

It follows from Lemma \ref{lem:PO_cartesian} that the number of words in $\mathcal{PO}(\word{v})$ with a prefix in $\mathbf{PO}\left(\word{v},\frac{n - 1}{2},j,\word{s}\right)$ is equal to the cardinality of $\mathbf{PO}\left(\word{v},\frac{n - 1}{2},j,\word{s}\right)$ multiplied by the size of $\mathbf{X}(\word{v},j,\word{s})$.
Similarly the number of words in $\mathcal{PO}(\word{v})$ with a prefix $\word{u}$ of length $\frac{n - 1}{2}$ where $\word{u}^R \word{u} \sqsubseteq \word{v}$ can be determined using $\mathbf{X}(\word{v},j,\word{u}^R \word{u})$.
The main difference in this case is that if $\word{u}^R\word{u} = \word{v}_{[j + 2,n+j]}$, where $j$ is the length of the longest suffix of $\word{u}^R\word{u}$ that is a prefix of $\word{v}$, then the number of words in $\mathcal{PO}(\word{v})$ where  $\word{u}$ is a prefix is 1 fewer than for the number of words strictly bounded by $\word{u}^R \word{u}$, i.e. $|\mathbf{X}(\word{v},J(\word{s},\word{v}),\word{s})| - 1$.
Lemma \ref{lem:SizeOfPO} provides the procedure to compute $|\mathcal{PO}(\word{v})|$.

\begin{lemma}
Let $J(\word{s},\word{v})$ return the length of the longest suffix of $\word{s}$ that is a prefix of $\word{v}$.
The size of $\mathcal{PO}(\word{v})$ is equal to \\
$\sum\limits_{\word{s} \in \mathbf{S}(\word{v},n - 1)} \left(\sum\limits_{j = 1}^{n - 1} |\mathbf{X}(\word{v},j,\word{s})| \cdot |\mathbf{PO}\left(\word{v},\frac{n - 1}{2},j,\word{s}\right)|\right) + \begin{cases}
0 & \word{s} \neq \phi \phi^R\\
|\mathbf{X}(\word{v},J(\word{s},\word{v}),\word{s})| &\word{s} \neq \word{v}_{[j + 2, n + j]}\\
|\mathbf{X}(\word{v},J(\word{s},\word{v}),\word{s})| - 1 &\word{s} = \word{v}_{[j + 2, n + j]}
\end{cases}$\\
Further this can be computed in $O(k \cdot n^3 \cdot \log(n))$ time.
\label{lem:SizeOfPO}
\end{lemma}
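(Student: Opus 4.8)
The plan is to establish the formula by partitioning the words of $\mathcal{PO}(\word{v})$ according to their length-$\frac{n-1}{2}$ prefix, and then to read off the running time by aggregating the costs of the subroutines already established. By Proposition~\ref{prop:odd_phorm} every word of $\mathcal{PO}(\word{v})$ has the unique form $\word{\phi} x \word{\phi}^R$, so it is completely determined by the pair $(\word{\phi},x)$ with $\word{\phi} \in \Sigma^{(n-1)/2}$ and $x \in \Sigma$. Hence $|\mathcal{PO}(\word{v})|$ equals the sum, over admissible prefixes $\word{\phi}$ (the layer-$\frac{n-1}{2}$ vertices of $\mathcal{TO}(\word{v})$), of the number of middle symbols $x$ for which $\langle \word{\phi} x \word{\phi}^R \rangle > \word{v}$. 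First I would treat the prefixes whose associated word $\word{\phi}^R \word{\phi}$ is \emph{strictly} bounded by some $\word{s} \sqsubseteq_{n-1} \word{v}$: each such prefix lies in exactly one set $\mathbf{PO}(\word{v},\frac{n-1}{2},j,\word{s})$. By the definition of $\mathbf{X}(\word{v},j,\word{s})$ together with Lemma~\ref{lem:PO_cartesian}, the set of valid middle symbols depends only on the pair $(j,\word{s})$ and not on the individual prefix, so every prefix in $\mathbf{PO}(\word{v},\frac{n-1}{2},j,\word{s})$ contributes exactly $|\mathbf{X}(\word{v},j,\word{s})|$ words. Summing over the set and then over all $(j,\word{s})$ produces the double sum in the statement.

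The second step is to account for the prefixes omitted by the $\mathbf{PO}$ sets, namely those $\word{\phi}$ for which $\word{\phi}^R \word{\phi} \sqsubseteq \word{v}$, i.e.\ equal to (rather than strictly below) a subword of $\word{v}$; by Definition~\ref{def:bounding} these are excluded from every $\mathbf{PO}(\word{v},i,j,\word{s})$ and must be added back separately. Fixing $\word{s} \sqsubseteq_{n-1}\word{v}$, such a prefix exists precisely when $\word{s}$ has the doubled palindromic shape needed to equal $\word{\phi}^R\word{\phi}$ for some $\word{\phi}$ (the branch $\word{s} = \phi\phi^R$); otherwise the correction is $0$. When it exists, the valid middle symbols are again enumerated by $\mathbf{X}(\word{v},J(\word{s},\word{v}),\word{s})$, where $J(\word{s},\word{v})$ is the longest suffix of $\word{s}$ that is a prefix of $\word{v}$. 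The one delicate point is the strict inequality demanded by $\mathcal{PO}(\word{v})$: Lemma~\ref{lem:Z(v,i,j)Computaiton} characterises $\mathbf{X}$ through $\word{v}_{[1,j]} x \word{s} \geq \word{v}$, and because here $\word{\phi}^R\word{\phi}=\word{s}$ exactly, this bound is met with equality, so that $\langle \word{\phi} x \word{\phi}^R \rangle = \word{v}$ rather than exceeding it, precisely when $\word{s} = \word{v}_{[j+2,n+j]}$. In that single case one symbol must be discarded, giving the $|\mathbf{X}(\word{v},J(\word{s},\word{v}),\word{s})| - 1$ branch; otherwise the full count is used.

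To finish correctness I would observe that the two contributions are disjoint and exhaustive: every admissible prefix either is strictly bounded, landing in exactly one $\mathbf{PO}$ set, or has $\word{\phi}^R\word{\phi}$ equal to a subword of $\word{v}$, so each word of $\mathcal{PO}(\word{v})$ is counted exactly once. For the running time, precomputing $XW$ and $WX$ costs $O(k\cdot n^3\cdot\log(n))$ by Propositions~\ref{prop:complexity_XW} and~\ref{prop:complexity_WX}; filling the table $SizePO$ over the $\frac{n-1}{2}$ layers costs $O(k\cdot n^2)$ per layer by Lemma~\ref{lem:SizeOfPO(i,j,s)}, hence $O(k\cdot n^3)$ in total; computing $|\mathbf{X}(\word{v},j,\word{s})|$ for all $O(n^2)$ pairs $(j,\word{s})$ costs $O(k\cdot n)$ each by Lemma~\ref{lem:Z(v,i,j)Computaiton}, again $O(k\cdot n^3)$; and evaluating the double sum over its $O(n^2)$ terms together with the $O(n)$ correction terms (each needing only $O(n)$-time palindrome and suffix checks) costs $O(n^2)$. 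The array precomputation dominates, giving the claimed $O(k\cdot n^3\cdot\log(n))$ bound.

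The step I expect to be the main obstacle is the boundary bookkeeping of the correction term: correctly detecting when a prefix's defining word $\word{\phi}^R\word{\phi}$ equals a subword of $\word{v}$ rather than strictly bounding below one, and reconciling the non-strict inequality $\geq$ used to define $\mathbf{X}$ with the strict inequality $>$ required for membership in $\mathcal{PO}(\word{v})$, so that the prefix producing the word exactly equal to $\word{v}$ is excluded and nothing is double-counted.
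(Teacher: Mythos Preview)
Your proposal is correct and follows essentially the same approach as the paper's own proof: partition $\mathcal{PO}(\word{v})$ into words whose prefix lies in some $\mathbf{PO}(\word{v},\frac{n-1}{2},j,\word{s})$ (giving the double sum via $\mathbf{X}$) versus those whose $\word{\phi}^R\word{\phi}$ coincides with a subword of $\word{v}$ (giving the case correction, with the $-1$ handling the unique equality case). Your complexity breakdown is in fact slightly more explicit than the paper's, correctly isolating the $XW$/$WX$ precomputation of Propositions~\ref{prop:complexity_XW}--\ref{prop:complexity_WX} as the source of the $\log(n)$ factor.
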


\begin{proof}
From Lemma \ref{lem:Z(v,i,j)Computaiton} the size of the set $\mathbf{X}(\word{v},j,\word{s})$ can be computed in $O(n \cdot k)$ operations.
By the definition of $\mathbf{X}(\word{v},j,\word{s})$, $|\mathbf{X}(\word{v},j,\word{s})| \cdot |\mathbf{PO}(\word{v},\frac{n - 1}{2},j,\word{s})|$ is the number of words $\word{w} \in \mathcal{PO}(\word{v})$ where $\word{w}_{[1,(n - 1)/2]} \in \mathbf{PO}(\word{v},\frac{n - 1}{2},j,\word{s})$.
Therefore $\sum\limits_{\word{s} \in \mathbf{S}(\word{v},n - 1)} \left(\sum\limits_{j = 1}^{n - 1} |\mathbf{X}(\word{v},j,\word{s})| \cdot |\mathbf{PO}(\word{v},\frac{n - 1}{2},j,\word{s})|\right)$ will count every word $\word{w} \in \mathcal{PO}(\word{v})$  where $\word{w}_{[1,(n - 1)/2]} \in \mathbf{PO}(\word{v},\frac{n - 1}{2},j,\word{s})$ for some arguments $j \in [|\mathbf{v}| - 1], \word{s} \in \mathbf{S}(\word{v}, n - 1)$.
As there are $n^2$ possible values of $j$ and $\word{s}$, and computing $|\mathbf{X}(\word{v},j,\word{s})| \cdot |\mathbf{PO}(\word{v},\frac{n - 1}{2},j,\word{s})|$ requires $O(k \cdot n)$ steps, the total complexity of counting $\sum\limits_{\word{s} \sqsubseteq \word{v}} \left(\sum\limits_{j = 1}^{n - 1} |\mathbf{X}(\word{v},j,\word{s})| \cdot |\mathbf{PO}(\word{v},\frac{n - 1}{2},j,\word{s})|\right)$ is $O(n^3 \cdot k)$.

For words of the form $\word{\phi}^R x \word{\phi}$ where $\word{\phi}^R\word{\phi} \sqsubseteq \word{v}$ note that for every character in $\mathbf{X}(\word{v},j,\word{s})$, $\langle x \word{\phi} \word{\phi}^R \rangle \geq \word{v}$.
Further, as $\langle \word{\phi}^R \word{\phi} x \rangle = \word{v}$ only when $\word{\phi}^R\word{\phi} = \word{v}_{[j + 2, |v| + j]}$ and $x = \word{v}_{j + 1}$, the number of words of this form is $|\mathbf{X}(\word{v},j,\word{s})|$, when $\word{\phi}^R\word{\phi} \neq \word{v}_{[j + 2, |v| + j]}$, and $|\mathbf{X}(\word{v},j,\word{s})| - 1$ otherwise.
As the conditions can be checked in $O(n)$ time, $\mathbf{X}(\word{v},j,\word{s})$ can be computed in $O(n \cdot k)$ time, and there are $O(n)$ subwords in $\mathbf{S}(\word{v},n - 1)$, the total complexity of computing $\sum\limits_{\word{s} \in \mathbf{S}(\word{v},n)} \begin{cases}
|\mathbf{X}(\word{v},J(\word{s},\word{v}),\word{s})| & \word{s} = \phi \phi^R \text{ and } \word{s} \neq \word{v}_{[j + 2, n + j]}\\
|\mathbf{X}(\word{v},J(\word{s},\word{v}),\word{s})| - 1 & \word{s} = \phi \phi^R \text{ and } \word{s} = \word{v}_{[j + 2, n + j]}\\
0 & \word{s} \neq \phi \phi^R
\end{cases}$ is $O(n^2 \cdot k)$.
Therefore the total complexity of computing the size of $\mathcal{PO}(\word{v})$ from the array $SizePO[i,j,\word{s}]$ is $O(n^3 \cdot k)$.
In order to compute the array $SizePO$ a total of $O(k \cdot n^3 \cdot \log(n))$ operations are needed.
Hence the total complexity is $O(k \cdot n^3 \cdot \log(n))$.
\end{proof}

\subsection{Even Length Palindromic Necklaces}
\label{subsec:even_length_palindromic}

Section \ref{subsec:odd_length_palindromic} shows how to rank $\word{v}$ within the set of odd length palindromic necklaces.
This leaves the problem of counting even length palindromic necklaces.
As in the odd case, the first step is to determine how to characterise these words.
Proposition \ref{prop:even_phorm} shows that every palindromic necklace will have at least one word of either the form $\word{\phi} \word{\phi}^R$, where $\word{\phi} \in \Sigma^{n/2}$, or $x \word{\phi} y \word{\phi}^R$, where $x,y \in \Sigma$ and $\word{\phi} \in \Sigma^{(n/2) - 1}$.
Proposition \ref{prop:even_phorm} is strengthened by Propositions \ref{prop:shared_even_form} and \ref{prop:even_one_phrom}, showing that each palindromic necklace of even length will have no more than two words of either form.
Lemmas \ref{lem:PE_Properties}, \ref{lem:PE}, \ref{lem:PS} and \ref{lem:complexity_even_overall} use these results a similar manner to Section \ref{subsec:odd_length_palindromic} to count the number of palindromic necklaces of even length.

\begin{proposition}
\label{prop:even_phorm}
A necklace $\necklace{w}$ of even length $n$ is palindromic if and only if there exists some word $\word{u} \in \necklace{w}$ where either (1) $\word{u} = x \word{\phi} y \word{\phi}^R$ where $x,y \in \Sigma$ and $\word{\phi} \in \Sigma^{(n/2) - 1}$, or (2) $\word{u} = \word{\phi} \word{\phi}^R$ where $\word{\phi} \in \Sigma^{n/2}$.
\end{proposition}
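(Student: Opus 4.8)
The plan is to prove the two directions separately: the ``if'' direction by direct verification, and the ``only if'' direction by a reflection-symmetry argument on the positions of a word, split according to a parity condition, in the same spirit as the odd-length Proposition~\ref{prop:odd_phorm}.

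For sufficiency I would simply exhibit, for each form, a rotation of the given word that witnesses $\necklace{w} = \necklace{w}^R$. If $\word{u} = \word{\phi}\word{\phi}^R$ then $\word{u}^R = \word{u}$, so $\word{u}$ is a palindrome and $\word{u}^R \in \necklace{w}$ trivially. If $\word{u} = x\word{\phi} y \word{\phi}^R$, a one-line index check shows $\word{u}^R = \langle \word{u} \rangle_1$, i.e. reversing $\word{u}$ gives exactly its rotation by one; hence $\word{u}^R \in \necklace{w}$ and $\necklace{w}$ is palindromic in both cases.

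For necessity, assume $\necklace{w}$ is palindromic and pick any $\word{u} \in \necklace{w}$. Since $\word{u}^R \in \necklace{w}$, there is a rotation $r$ with $\langle \word{u} \rangle_r = \word{u}^R$. Comparing the two words position by position (using the paper's convention $(\langle \word{u}\rangle_r)_i = \word{u}_{r+i}$ and $(\word{u}^R)_i = \word{u}_{n+1-i}$) yields the reflection identity $\word{u}_j = \word{u}_{c-j}$ for all $j$, with indices modulo $n$ and $c \equiv r+1 \pmod n$. Because $n$ is even, the congruence $2j \equiv c \pmod n$ governing the fixed positions of this reflection has exactly two solutions when $c$ is even and none when $c$ is odd; I would therefore split into these two cases. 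When $c$ is odd I solve $2s \equiv c-1 \pmod n$ (solvable since $c-1$ is even) and rotate $\word{u}$ by $s$ to obtain a word $\word{u}'$ satisfying $\word{u}'_i = \word{u}'_{n+1-i}$, an even palindrome, which is form (2). When $c$ is even I instead solve $2s \equiv c-2 \pmod n$ and rotate so that the two fixed positions land at $1$ and $n/2+1$; reading off the resulting symmetry $\word{u}'_i = \word{u}'_{2-i}$ exhibits $\word{u}'$ as $x\word{\phi} y \word{\phi}^R$, where $x = \word{u}'_1$ and $y = \word{u}'_{n/2+1}$ are the two symbols lying on the axis, which is form (1).

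The main obstacle is purely the modular-index bookkeeping: correctly deriving $\word{u}_j = \word{u}_{c-j}$ from $\langle \word{u}\rangle_r = \word{u}^R$, confirming the fixed-point count of the reflection for even $n$, and verifying that the normalizing rotation places the word into the stated canonical shape without off-by-one errors at the boundary positions. All of these are elementary, but the index conventions must be tracked carefully. I note that this statement only asserts \emph{existence} of such a word; the sharper bounds on how many words of each form a necklace can contain are what Propositions~\ref{prop:shared_even_form} and~\ref{prop:even_one_phrom} will supply.
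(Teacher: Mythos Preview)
Your proposal is correct and follows essentially the same approach as the paper: the sufficiency direction is identical, and for necessity both you and the paper take a word $\word{u}$, use the rotation witnessing $\word{u}^R \in \necklace{w}$ to derive a reflection symmetry on the positions, and then split into two cases by a parity condition on that rotation before rotating into the canonical form. Your modular-arithmetic and fixed-point formulation is somewhat cleaner than the paper's explicit decomposition of $\word{u}$ into two palindromic subwords $\word{s},\word{t}$, but the underlying argument is the same.
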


\begin{proof}
Given a word $\word{u}$ of the form $x \word{\phi} y \word{\phi}^R$ where $\word{u} \in \necklace{w}$, $\word{u}^R$ is equal to $\word{\phi} y \word{\phi}^R x$.
Following this observation $\word{u} = \langle \word{u}^R\rangle_1$.
Therefore for every word in $\necklace{w}$ the reflection is also in $\necklace{w}$.
Similarly, given a word $\word{u} \in \necklace{w}$ of the form $\word{\phi} \word{\phi}^R$, $\word{u} = \word{i}^R$, therefore for every word in the necklace $\necklace{w}$, the reflection is also in $\necklace{w}$.

In the other direction, let $\necklace{w}$ be a palindromic necklace of even length $n$.
If there is any word $\word{u} \in \necklace{w}$ such that $\word{u} = \word{u}^R$, then the word must be of the form $\word{\phi} \word{\phi}^R$.
Therefore for the sake of contraction, assume every word $\word{u} \in \necklace{w}$ must not be equal to $\word{u}^R$.
As $\necklace{w}$ is palindromic, there exists some rotation $i$ such that $\word{u} = \langle \word{u}^R \rangle_i$.
Therefore $\word{u}_1 = \word{u}_{n - i}, \word{u}_2 = \word{u}_{n - i - 1} \hdots \word{u}_{n - i} = \word{u}_1$ and $\word{u}_{n - i + 1} = \word{u}_n \hdots \word{u}_{1} = \word{u}_{n - i + 1}$.
This splits $\word{u}$ into 2 subwords, $\word{s}$ and $\word{t}$, where $\word{s} = \word{u}_{[1,n - i]}$ and $\word{t} = \word{u}_{[n - i + 1,n]}$ where $\word{s} = \word{s}^R$ and $\word{t} = \word{t}^R$.
Note that $\word{s}_1 \word{t} \word{s}_{n - i} = \word{s}_{n - i} \word{t}^R \word{s}_1$ and $\word{t}_1 \word{s} \word{t}_{n - i} = \word{t}_{n - i} \word{s}^R \word{t}_1$.

To show the structural claim, there are two cases to consider depending on the value of $i$ and $\frac{n}{2}$.
If $i$ is odd then lengths of $\word{s}$ and $\word{t}$ are even.
Two new words $\word{s}'$ and $\word{t}'$ are defined where $\word{s}' = \word{s}_{[(i/2) + 1,i]} \word{t}_{[1,(n-i)/2]}$ and $\word{t}' = \word{t}_{[(n - i)/2 + 1, n - i]} \word{s}_{[1,i/2]}$.
By the definition of $\word{s}$ and $\word{t}$, $\word{s}'^R = \word{t}_{[1,(n - i)/2]}^R \word{s}_{[i,i/2 + 1]}^R = \word{t}_{[(n - i)/2 + 1, n - i]} \word{s}_{[1,i/2 + 1]} = \word{t}'$.
Therefore this word can be rotated to a word of the form $\word{\phi} \word{\phi}^R$.

If $i$ is even then the lengths of $\word{s}$ and $\word{t}$ are odd.
As before 2 words $\word{s}'$ and $\word{t}'$ are constructed of length $\frac{n}{2} - 1$ where $\word{s}' = \word{s}_{[i/2 + 1, i]} \word{t}_{[1, (n - i)/2 - 1]}$ and $\word{t}' = \word{t}_{\frac{n - i}{2} + 1} \hdots \word{t}_{n - i} \word{s}_1 \hdots \word{s}_{\frac{i}{2} + 1}$.
As before, $\word{s}'^R = \word{t}_{[1,(n - i]/2 - 1]}^R \word{s}^R_{[i/2 + 1, i]}  = \word{t}_{[(n -i )/ 2 + 1, n - i]} \word{s}_{[1,i/2 - 1]} = v'$.
Letting $x = \word{t}_{\frac{n - i}{2}}$ and $y = \word{s}_{\frac{i}{2}}$, then there is some rotation of $\word{u}$ of the form $x \word{\phi} y \word{\phi}^R$.
 \end{proof}

\begin{proposition}
\label{prop:shared_even_form}
The word $\word{u} \in \Sigma^*$ equals both $x \word{\phi} y \word{\phi}^R = \word{\psi} \word{\psi}^R$ if and only if $\word{u} = x^n$.
\end{proposition}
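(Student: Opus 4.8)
The plan is to translate each of the two decompositions into a single structural statement about the word $\word{u}$ itself and then combine the two statements. Writing $n = 2m$, the second form $\word{u} = \word{\psi}\word{\psi}^R$ says exactly that $\word{u}$ is a palindrome: comparing symbols position by position gives $\word{u}_i = \word{u}_{n + 1 - i}$ for all $i$, i.e. $\word{u} = \word{u}^R$, and conversely any even-length palindrome splits as $\word{\psi}\word{\psi}^R$. The first form $\word{u} = x\word{\phi} y \word{\phi}^R$ I would handle by computing the reversal directly: $\word{u}^R = \word{\phi} y \word{\phi}^R x$, which is precisely $\word{u}$ with its leading symbol moved to the back, so $\word{u}^R = \langle \word{u}\rangle_1$. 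This is the same single-shift relation already observed in the proof of Proposition \ref{prop:even_phorm}.

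For the nontrivial direction, suppose $\word{u}$ admits both decompositions simultaneously. Then the two structural facts above both hold for the one word $\word{u}$, giving $\word{u} = \word{u}^R = \langle \word{u}\rangle_1$. The equality $\word{u} = \langle \word{u}\rangle_1$ means that $\word{u}$ is fixed by the rotation by one position, and hence $\word{u}_i = \word{u}_{i + 1}$ for every $i$ (indices taken cyclically). Thus all symbols of $\word{u}$ coincide; since the leading symbol of the first form is $x$, I conclude $\word{u} = x^n$.

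The converse is immediate: for $\word{u} = x^n$ take $\word{\psi} = x^{m}$ so that $\word{\psi}\word{\psi}^R = x^n$, and take $\word{\phi} = x^{m - 1}$ together with $y = x$ so that $x \word{\phi} y \word{\phi}^R = x^n$, exhibiting both forms at once.

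I expect essentially no conceptual obstacle here; the content is light once the two decompositions are converted into the palindrome identity $\word{u} = \word{u}^R$ and the single-shift identity $\word{u}^R = \langle \word{u}\rangle_1$. The only point requiring care is the index bookkeeping in the reversal computation $\word{u}^R = \word{\phi} y \word{\phi}^R x$ and the verification that this string equals $\langle \word{u}\rangle_1$ under the paper's rotation convention; after that, the collapse to a constant word forced by a one-position rotation is immediate.

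\hfill\qedsymbol
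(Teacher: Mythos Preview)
Your argument is correct and slightly different in spirit from the paper's. You extract two global structural facts about $\word{u}$ from the two decompositions (the palindrome identity $\word{u}=\word{u}^R$ from $\word{\psi}\word{\psi}^R$, and the one-step rotation identity $\word{u}^R=\langle\word{u}\rangle_1$ from $x\word{\phi} y\word{\phi}^R$, exactly as in the proof of Proposition~\ref{prop:even_phorm}), then combine them to get $\word{u}=\langle\word{u}\rangle_1$, which forces constancy. The paper instead works positionally: from $\word{\psi}=x\word{\phi}$ it rewrites $\word{u}=x\word{\phi}\word{\phi}^R x$, compares the last symbol with that of $x\word{\phi} y\word{\phi}^R$ to force $\word{\phi}_1=x$, peels off this letter, and iterates. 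Your route is cleaner and reuses an observation already established in the paper; the paper's is a bare-hands induction that avoids invoking rotations but is a bit more bookkeeping-heavy. Both are equally short once the key step is isolated.
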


\begin{proof}
Starting with $x \word{\phi} y \word{\phi}^R = \word{\psi} \word{\psi}^R$ as $x \word{\phi} = \word{\psi}$, $x \word{\phi} y \word{\phi}^R = x \word{\phi} \word{\phi}^R x$.
This implies $\word{\phi}_1 = x$ allowing this to be rewritten as $x x \word{\phi}' \word{\phi}'^R x x = x \word{\phi} y \word{\phi}^R$, implying that $\word{\phi}'_1 = x$.
Repeating this gives $x \word{\phi} y \word{\phi}^R = x x x \hdots x$.
 \end{proof}

\begin{proposition}
\label{prop:even_one_phrom}
For an even length palindromic necklace $\necklace{a}$ there are at most two words $\word{w},\word{u} \in \necklace{a}$ where either (1) $\word{w}$ and $\word{u}$ are of the form $x \word{\phi} y \word{\phi}^R$ where $x,y \in \Sigma$ and $\word{\phi} \in \Sigma^{(n/2) - 1}$ or (2) $\word{w}$ and $\word{u}$ are of the form $\word{\phi} \word{\phi}^R$ where $\word{\phi} \in \Sigma^{n/2}$.
\end{proposition}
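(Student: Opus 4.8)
The plan is to recast both special forms as conditions relating the reversal of a word to its rotations, and then to reduce the entire statement to counting solutions of a single linear congruence modulo the period. First I would observe that, for a word $\word{u}$ of even length $n$, form (2) holds exactly when $\word{u} = \word{u}^R = \langle \word{u}\rangle_0$, while form (1) holds exactly when $\word{u}^R = \langle \word{u}\rangle_1$: the computation $\word{u}^R = \word{\phi}y\word{\phi}^R x = \langle \word{u}\rangle_1$ for $\word{u} = x\word{\phi}y\word{\phi}^R$ gives one direction, and reading the equality $\word{u}^R = \langle\word{u}\rangle_1$ coordinatewise recovers the decomposition $\word{u} = x\word{\phi}y\word{\phi}^R$, giving the converse. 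Thus a word has form (1) or (2) precisely when its reversal equals $\langle \word{u}\rangle_0$ or $\langle \word{u}\rangle_1$.

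Next I would fix one word $\word{u} \in \necklace{a}$ of period $p$, so that the distinct words of $\necklace{a}$ are exactly the rotations $\langle\word{u}\rangle_s$ for $s \in \{0,\dots,p-1\}$, with $\langle\word{u}\rangle_s = \langle\word{u}\rangle_{s'}$ if and only if $s \equiv s' \pmod p$. Since $\necklace{a}$ is palindromic we have $\word{u}^R \in \necklace{a}$, hence $\word{u}^R = \langle\word{u}\rangle_r$ for some $r$ that is well defined modulo $p$. The key computational step is the identity $(\langle\word{u}\rangle_s)^R = \langle\word{u}^R\rangle_{n-s} = \langle\word{u}\rangle_{r-s}$ (indices taken modulo $n$), obtained by composing reversal with rotation. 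Combining this with the two reformulated conditions yields that $\langle\word{u}\rangle_s$ has form (2) iff $\langle\word{u}\rangle_{r-s} = \langle\word{u}\rangle_s$, i.e. $2s \equiv r \pmod p$, and has form (1) iff $\langle\word{u}\rangle_{r-s} = \langle\word{u}\rangle_{s+1}$, i.e. $2s \equiv r - 1 \pmod p$.

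The statement then reduces to bounding the number of $s \in \{0,\dots,p-1\}$ satisfying $2s \equiv r \pmod p$ or $2s \equiv r-1 \pmod p$. I would split on the parity of $p$: if $p$ is odd then $2$ is invertible modulo $p$, so each congruence has exactly one solution, and these are distinct because $r \not\equiv r-1 \pmod p$ for $p>1$; if $p$ is even then $r$ and $r-1$ have opposite, and well defined, parities modulo $p$, so exactly one of the two congruences is solvable, and it then has exactly two solutions. In every case at most two residues $s$, hence at most two distinct words of $\necklace{a}$, satisfy the condition, with the count dropping to one only when $p=1$, i.e. $\word{u} = x^n$, consistent with Proposition~\ref{prop:shared_even_form}.

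I expect the main obstacle to be the bookkeeping in the identity $(\langle\word{u}\rangle_s)^R = \langle\word{u}\rangle_{r-s}$ and, relatedly, ensuring that equality of rotations is translated into congruences modulo the period $p$ rather than modulo $n$; once that reduction is secured, the final bound is elementary number theory about the size of the preimage of a value under doubling modulo $p$.
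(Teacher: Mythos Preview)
Your argument is correct and in fact cleaner than the paper's. The paper proceeds by direct case analysis: assuming two distinct rotations $\word{w} = \langle\word{u}\rangle_0$ and $\word{v} = \langle\word{u}\rangle_r$ are both of form~(1) (respectively both of form~(2)), it manipulates the index identities $\word{w}_i = \word{w}_{n-i+1}$, $\word{v}_i = \word{v}_{n-i+1}$, $\word{v}_i = \word{w}_{i+r}$ to force $\word{w}_i = \word{w}_{i+2r}$, whence the period divides $\gcd(2r,n)$, and then argues that the only admissible second word is $y\word{\phi}^R x\word{\phi}$ (respectively $\word{\phi}^R\word{\phi}$). Your approach replaces this explicit index chasing with a single structural reduction: the recharacterisation of forms~(1) and~(2) as $\word{u}^R = \langle\word{u}\rangle_1$ and $\word{u}^R = \langle\word{u}\rangle_0$, together with the identity $(\langle\word{u}\rangle_s)^R = \langle\word{u}\rangle_{r-s}$, turns the whole question into counting solutions of $2s \equiv r$ or $2s \equiv r-1$ modulo the period~$p$. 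This is both shorter and yields a slightly sharper conclusion than the paper states: you get at most two words \emph{in total} across the two forms (one of each when $p$ is odd, two of the same form when $p$ is even), whereas the paper only claims at most two of each form separately. Your identification of the delicate step, namely working modulo~$p$ rather than modulo~$n$ when comparing rotations, is exactly right; once that is handled the rest is routine.
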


\begin{proof}
From Proposition \ref{prop:even_phorm} there must be at least 1 word of either form.
Proposition \ref{prop:shared_even_form} shows that a word may only be of the form $x \word{\phi} y \word{\phi}^R$ and $\word{\psi} \word{\psi}^R$ if and only if $\word{w} = x^n$.
Let $\word{w}$ and $\word{v}$ be two words such that $\word{w},\word{v} \in \necklace{a}$ and $\word{w} \neq \word{v}$ where $\necklace{a}$ is a necklace of even length.
There are two cases based on the form of $\word{w}$ and $\word{v}$.

\textbf{Case 1:}
$\word{w} = x \word{\phi} y \word{\phi}^R$, $\word{v} = a \word{\psi} b \word{\psi}^R$, $\word{v} = \langle \word{w} \rangle_r$.
Let $r$ be the smallest rotation where $\langle \word{w} \rangle_r \neq \word{w}$ and $\langle \word{w} \rangle_r = a \word{\psi} b \word{\psi}^R$.
Therefore $\word{v}_i = \word{v}_{n - i + 1} = \word{w}_{n - i + r} = \word{w}_{n - n + i - r} = \word{w}_{i - r} = \word{v}_{n + i - 2r} = \word{v}_{i - 2r}$.
Therefore, $\word{v}_{i} = \word{v}_{i + 2r} = \word{v}_{i + 4r} = \hdots = \word{v}_{i}$.
Therefore $\word{w}$ has a period of no more than $p = GCD(2r,n)$.
If $GCD(2r,n) \leq 2r$, then the period must be no more than $r$.
If the period is $r$ then $\word{w} = \word{v}$, contradicting the assumption that they are not equal.
Otherwise, $\langle\word{w}\rangle_{r} = \langle\word{w}\rangle_{r - p}$, contradicting the assumption that $r$ is the smallest rotation for which the rotation of $\word{w}$ equals $x \word{\psi} y \word{\psi}^R$, for some arguments of $x,y \in \Sigma$ and $\word{\psi} \in \Sigma^*$.
Therefore the period must be $2r$.
Hence let $r > s$ be some rotation such that $\word{w} \neq \langle\word{w}\rangle_{s} \neq \langle\word{w}\rangle_{r}$.
As $\langle\word{w}\rangle_r = \word{w}^R, \langle\word{w}\rangle_{s + r} = (\langle\word{w}\rangle_{s})^R$.
As the period is $2r$, if $s + r > 2r$ then the rotation $s - r$ is equivalent to the rotation by $s$ contradicting the assumption that $r$ is the smallest rotation for which $\langle\word{w}\rangle_r = x \word{\psi} y \word{\psi}^R$, for some arguments of $x,y \in \Sigma$ and $\word{\psi} \in \Sigma^*$.
Therefore the only word satisfying $\word{v}_i = \word{v}_{i -2r}$ is when $r = \frac{n}{2}$, making $\word{v} = y \word{\phi}^R x \word{\phi}$.

\textbf{Case 2:} $\word{w} = \word{\phi} \word{\phi}^R$, $\word{v} = \word{\psi} \word{\psi}^R$, $\word{v} = \langle \word{w} \rangle_r $.
For the sake of contradiction, let $r$ be the smallest rotation such that $\word{w} \neq \langle \word{w} \rangle_r$ and $\langle \word{w} \rangle_r = \word{\psi} \word{\psi}^R$.
Therefore $\word{v}_i = \word{w}_{i + r \bmod n}$, further $\word{w}_i = \word{v}_{n + i - r \bmod n}$, $\word{w}_i = \word{w}_{n - i + 1}$ and $\word{v}_i = \word{v}_{n - i + 1}$.
These equations can be rearranged to give $\word{v}_i = \word{v}_{n - i + 1} = \word{w}_{r + n - i - 1} = \word{w}_{n - r - n + i + 1 - 1} = \word{w}_{i - r} = \word{v}_{i - 2r} = \word{v}_i$.
Repeated application of $\word{v}_i = \word{v}_{i - 2r} = \word{v}_{i - 4r} = \hdots = \word{v}_{i - s\cdot r}$ shows that $\word{w}$ must have a period of no more than $p = GCD(2r,n)$.
Therefore $\word{w}$ can be rewritten as $\word{u}^{n/p} = \word{\phi} \word{\phi}^R$.
If $\frac{n}{p}$ is even then $\word{u} = \word{u}^{R}$.
Assume for the sake of contradiction that there is some rotation $t$ such that $r < t < 2r, \word{w} \neq \langle \word{w} \rangle_t \neq \word{v}$ and $\langle \word{w} \rangle_t$ is of the form $\word{\phi} \word{\phi}^{R}$.
If $\word{u} = \word{u}^R$, then $\langle \word{u} \rangle_t = (\langle \word{u} \rangle_t)^R$.
Hence the rotation by $2r - t$ is equivalent to the rotation by $t$, contradicting the assumption that $r$ is the smallest rotation.
If the period of $\word{w}$ is smaller than $2r$ it must be a factor of $r$, hence $\langle \word{w} \rangle r = \word{w}$ contradicting the assumption that $\word{w} \neq \word{v}$.
Therefore the period must be $2r$, implying that if $\word{w} = \word{\phi} \word{\phi}^R$ then $\word{v} = \word{\phi}^R \word{\phi}$.
If $\frac{n}{p}$ is odd then as $\word{u}^{n/p} = \word{\phi} \word{\phi}^R, \word{u}_{[1,r]} = \word{u}_{r + 1,2r}$.
Therefore the period is at most $r$, contradicting the assumption that $p = GCD(2r, n)$.
In this case the arguments from the even case apply again.
\end{proof}

\noindent
Propositions \ref{prop:even_phorm}, \ref{prop:shared_even_form} and \ref{prop:even_one_phrom} show that every palindromic necklace of even length has 1 or 2 words of either the form $x \word{\phi} y \word{\phi}^R$ or $\word{\phi} \word{\phi}^{R}$.
To count the number of words of each form, the problem is split into two sub problems, counting words of the form $x \word{\phi} y \word{\phi}^R$ and counting the number of words of the form $\word{\phi} \word{\phi}^R$.
This is done using the same basic ideas as in Section \ref{subsec:odd_length_palindromic}.
Two new sets $\mathcal{PE}(\word{v})$ and $\mathcal{PS}(\word{v})$ are introduced, serving the same function as $\mathcal{PO}(\word{v})$ for words of the from $x \word{\phi} y \word{\phi}^R$ and $\word{\phi} \word{\phi}^R$ respectively.
\begin{align*}
&\mathcal{PE}(\word{v}) := \left\{\word{w} \in \Sigma^n : \word{w} = x \word{\phi} y \word{\phi}^R, \text{ where }\langle\word{w}\rangle > \word{v}, \word{\phi} \in \Sigma^{(n/2) - 1}, x,y, \in \Sigma\right\}\\
&\mathcal{PS}(\word{v}) := \left\{\word{w} \in \Sigma^n : \word{w} = \word{\phi} \word{\phi}^R, \text{ where }\langle\word{w}\rangle > \word{v}, \word{\phi} \in \Sigma^{(n/2) - 1}\right\}
\end{align*}

\noindent
Unlike the set $\mathcal{PO}(\word{v})$ in Section \ref{subsec:odd_length_palindromic} the sets $\mathcal{PE}(\word{v})$ and $\mathcal{PS}(\word{v})$ do not correspond directly to bracelets greater than $\word{v}$.
For notation let $\mathcal{GE}(\word{v})$ and $\mathcal{GS}(\word{v})$ denote the number of bracelets greater than $\word{v}$ of the form $x \word{\phi} y \word{\phi}^R$ and $\word{\phi} \word{\phi}^R$ respectively.
The number of even length necklaces greater than $\word{v}$ equals $\mathcal{GE}(\word{v}) + \mathcal{GS}(\word{v}) - (k - \word{v}_1)$, where $k - \word{v}_1$ denotes the number of symbols in $\Sigma$ greater than $\word{v}_1$.
Before showing how to compute the size of these sets, it is useful to first understand how they are used to compute the rank amongst even length palindromic necklaces.
Lemmas \ref{lem:PS_conversion} and \ref{lem:PE_conversion} shows how to covert the cardinalities of these sets into the number of even length palindromic necklaces smaller than $\word{v}$.
The main idea is to use the observations given by Propositions \ref{prop:even_phorm} and \ref{prop:even_one_phrom} to determine how many even length palindromic necklaces have either one or two words of the form $x \word{\phi} y \word{\phi}^R$ or $\word{\phi} \word{\phi}^R$.

\begin{proposition}
Let $l = \frac{n + 2}{4}$ if $\frac{n}{2}$ is odd or $l = \frac{n}{4}$ if $\frac{n}{2}$ is even.
The number of even length palindromic necklaces is given by $\frac{1}{2}\left(k^{n/2}(k + 2) + k^{l}\right) - k$.
\label{prop:even_no_necklaces}
\end{proposition}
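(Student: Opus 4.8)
\ProofSketch{
The plan is to count the two families of symmetric representatives isolated in Propositions~\ref{prop:even_phorm}--\ref{prop:even_one_phrom} and then convert that word count into a necklace count. First I would count the representatives of each shape outright: there are exactly $k^{n/2}$ words of the form $\word{\phi}\word{\phi}^R$, one for each $\word{\phi}\in\Sigma^{n/2}$, and exactly $k^{(n/2)+1}$ words of the form $x\word{\phi}y\word{\phi}^R$, one for each choice of $x,y\in\Sigma$ and $\word{\phi}\in\Sigma^{(n/2)-1}$. By Proposition~\ref{prop:even_phorm} every even-length palindromic necklace contains at least one such representative, and by Proposition~\ref{prop:even_one_phrom} it contains at most two of each shape, so the whole problem reduces to determining, necklace by necklace, exactly how many representatives of each shape it contributes.

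Next I would partition the palindromic necklaces by these representative multiplicities. Using the rotation-by-$n/2$ mechanism from the proof of Proposition~\ref{prop:even_one_phrom}, a necklace contributes only a single word of a given shape precisely when that word is fixed by the half-rotation, i.e.\ when it is periodic with period dividing $n/2$; such a word collapses to $x\word{\phi}x\word{\phi}$ with $\word{\phi}=\word{\phi}^R$ for the first shape, and to $\word{\psi}\word{\psi}$ with $\word{\psi}=\word{\psi}^R$ for the second. Counting these degenerate representatives therefore reduces to counting palindromes of length $n/2-1$ (respectively $n/2$), and since the number of palindromes of length $m$ over $\Sigma$ is $k^{\lceil m/2\rceil}$, the parity of $n/2$ shifts $\lceil (n/2-1)/2\rceil$ and produces the exponent $l=(n+2)/4$ when $n/2$ is odd and $l=n/4$ when $n/2$ is even. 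This is the source of the $k^{l}$ contribution.

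Finally I would assemble the total by inclusion--exclusion on the combined word count $k^{n/2}+k^{(n/2)+1}=k^{n/2}(k+1)$. The plan is to divide this total by two (each non-degenerate necklace carrying two representatives) and then correct by the degenerate term counted above, which is what should yield the $\tfrac12\bigl(k^{n/2}(k+2)+k^{l}\bigr)$ part of the statement. The residual $-k$ term comes from Proposition~\ref{prop:shared_even_form}: the only words that are simultaneously of both shapes are the constant words $x^{n}$, of which there are exactly $k$, and each such word must be removed once so that its necklace is not counted twice.

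The step I expect to be the main obstacle is the bookkeeping of the second and third paragraphs, namely proving that the single-representative necklaces are \emph{exactly} the periodic ones described, relating the degenerate sets of the two shapes to one another, and verifying that their overlap with the constant necklaces is accounted for exactly once. These multiplicity and overlap counts are what pin down the precise constants $(k+2)$, $k^{l}$ and $-k$ in the statement, so an off-by-one in the palindrome exponent $l$ or a miscount of the both-shape necklaces is exactly where the argument is most delicate and must be checked against small cases.
}
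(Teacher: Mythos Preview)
Your overall strategy is the same as the paper's: count the $k^{n/2}$ words of shape $\word{\phi}\word{\phi}^{R}$ and the $k^{(n/2)+1}$ words of shape $x\word{\phi}y\word{\phi}^{R}$, determine for each shape which necklaces carry one versus two such words, convert each word count into a necklace count via $\tfrac12(\text{total}+\text{one-rep})$, add the two shape counts, and subtract $k$ for the constant necklaces that lie in both shapes (Proposition~\ref{prop:shared_even_form}).

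The gap is in your one-representative count for the shape $x\word{\phi}y\word{\phi}^{R}$. You characterise the degenerate words as $x\word{\phi}x\word{\phi}$ with the extra constraint $\word{\phi}=\word{\phi}^{R}$ and then reduce to counting palindromes of length $n/2-1$; that yields $k\cdot k^{\lceil (n/2-1)/2\rceil}$, which is $k^{l}$ when $n/2$ is odd and $k^{l+1}$ when $n/2$ is even, and in neither parity is it $k^{n/2}$. Feeding your count into the assembly step gives
\[
\tfrac12\bigl(k^{n/2+1}+k^{n/2}+k\cdot k^{\lceil (n/2-1)/2\rceil}+k^{l}\bigr)-k,
\]
which does not simplify to the stated $\tfrac12\bigl(k^{n/2}(k+2)+k^{l}\bigr)-k$. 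The paper instead takes the one-representative count for this shape to be $k^{n/2}$, identifying the degenerate words with \emph{all} words of the form $x\word{\phi}x\word{\phi}$ (one for every first half $x\word{\phi}\in\Sigma^{n/2}$), without the additional palindromicity constraint on $\word{\phi}$; that extra $k^{n/2}$ is exactly what turns the coefficient $(k+1)$ into $(k+2)$. So the step you yourself flagged as the main obstacle is precisely where your outline departs from the paper, and your palindrome-based count for the $x\word{\phi}y\word{\phi}^{R}$ degenerates cannot reproduce the formula as stated. (Incidentally, you have the two ``collapse'' descriptions swapped relative to the order in which you introduced the shapes, but that is cosmetic compared to the counting issue.)
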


\begin{proof}
First consider the number of words of the form $x \word{\phi} y \word{\phi}^R$.
Let $\word{w},\word{u} \in \necklace{w}$ be a pair of words of the form $x \word{\phi} y \word{\phi}^R$ such that $\word{w} \neq \word{u}$ and $\langle \word{w} \rangle_r = \word{u}$.
Following Proposition \ref{prop:even_one_phrom}, if $2r < n$ and $\frac{n}{2r}$ is odd, then $\word{w} = x \word{\phi} y \word{\phi}^R = \word{\psi}^t$ for some word $\word{\psi}$ of even length and $t = \frac{n}{2r}$.
Therefore $\word{\psi} =\word{\psi}^R$ and further $\word{\psi} = \langle\word{\psi}\rangle_{r}$, therefore there will only be a single word of the form $x \word{\phi} y \word{\phi}$.
On the other hand if $2r < n$ and $\frac{n}{2r}$ is even then $\word{w} = x \word{\phi} y \word{\phi}^R = \word{\psi}^t$ for $t = \frac{n}{2r}$ and some word $\word{\psi}$ of length $2r$.
In this case, as $t$ must be at least 2, $x \word{\phi} y \word{\phi}^R = x \word{\phi} x \word{\phi}$, therefore $y = x$ and $\word{\phi} = \word{\phi}^R$.
Further as $\word{u} = (\word{\psi}_r)^t$, $\word{\psi} = \word{\psi}^R$, therefore $\word{u} = \word{w}$, hence there is only a single word of the form $x \word{\phi} y \word{\phi}^R$.
Therefore the period of $\word{w}$ must be $n$ and hence there are only two words of the form $x \word{\phi} y \word{\phi}^R$ if and only if $x \word{\phi} \neq y \word{\phi}^R$.

Using this basis, the number of even length palindromic necklaces with one words of the form $x \word{\phi} y \word{\phi}^R$ equals the number of words of the form $x \word{\phi} x \word{\phi}$.
This is equal to $k^{n/2}$.
As the number of words with 2 representations of the form $x \word{\phi} y \word{\phi}^R$ is $k^{(k/2) + 1}$, the number of necklaces with any word of the form $x \word{\phi} y \word{\phi}^R$ is $\frac{1}{2}\left(k^{n/2 + 1} + k^{n/2}\right)$.

Proposition \ref{prop:even_one_phrom} shows that, given $\word{w},\word{u} \in \necklace{w}$ of the form $\word{\phi} \word{\phi}^R$, $\word{w} \neq \word{u}$ if and only if $\word{u} = \langle\word{w}\rangle_{n/2}$ and $\word{\phi} \neq \word{\phi}^R$.
Therefore the number of necklaces with 1 word of the form $\word{\phi} \word{\phi}^R$ is equal to the number of values of $\word{\phi}$ for which $\word{\phi} = \word{\phi}^R$.
If $|\word{\phi}|$ is odd, this is equal to $k^{(n + 2)/4}$ and $k^{n/4}$ if $|\word{\phi}|$ is even.
Hence the number of necklaces with two representations of the form $\word{\phi}\word{\phi}^R$ is $\frac{1}{2}(k^{n/2} - k^l)$, where $l = \frac{n + 2}{4}$ if $\frac{n}{2}$ is odd or $l = \frac{n}{4}$ if $\frac{n}{2}$ is even.
Therefore the total number of necklaces with any word of the form $\word{\phi} \word{\phi}^R$ is $\frac{1}{2}\left(k^{n/2} + k^{l}\right)$.
Recalling from Proposition \ref{prop:shared_even_form} that a word is of both forms if and only if it is of the form $x^n$, there are $k$ necklaces that would be counted by both equations.
Therefore the total number of even length necklaces are $\frac{1}{2}\left(k^{n/2 + 1} + k^{n/2} + k^{n/2} + k^{l}\right) - k$.
\end{proof}

\begin{lemma}
The number of necklaces greater than $\word{v}$ containing at least one word of the form $x \word{\phi} y \word{\phi}^R$ is given by $GE(\word{v}) = \frac{1}{2}\left(|\mathcal{PE}(\word{v})| + \begin{cases}
|\mathcal{PO}(\word{v}_{[1,n/2]})| & \frac{n}{2} \text{ is odd.}\\
GE(\word{v}_{[1,n/2]}) & \frac{n}{2} \text{ is even.}
\end{cases}\right)$.
\label{lem:PE_conversion}
\end{lemma}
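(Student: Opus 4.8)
The plan is to count each necklace tracked by $GE(\word{v})$ according to how many of its words have the form $x\word{\phi}y\word{\phi}^R$, and then to reduce the ``single word'' necklaces to a strictly shorter instance of the same problem. By Propositions \ref{prop:even_phorm} and \ref{prop:even_one_phrom} every necklace counted by $GE(\word{v})$ contains exactly one or exactly two words of the form $x\word{\phi}y\word{\phi}^R$, and (as extracted from the proof of Proposition \ref{prop:even_no_necklaces}) it contains exactly one precisely when that word satisfies $x\word{\phi} = y\word{\phi}^R$, i.e. when it equals $(x\word{\phi})^2$ with $\word{\phi}=\word{\phi}^R$. Writing $N_1$ for the number of such single-word necklaces whose representative exceeds $\word{v}$, every word of $\mathcal{PE}(\word{v})$ lies in one of these necklaces, the two-word necklaces contributing two words each and the single-word ones contributing one. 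Hence $|\mathcal{PE}(\word{v})| = 2(GE(\word{v}) - N_1) + N_1 = 2\,GE(\word{v}) - N_1$, which rearranges to $GE(\word{v}) = \tfrac12(|\mathcal{PE}(\word{v})| + N_1)$. It therefore remains to identify $N_1$ with $|\mathcal{PO}(\word{v}_{[1,n/2]})|$ when $n/2$ is odd and with $GE(\word{v}_{[1,n/2]})$ when $n/2$ is even.

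The key step is a ``doubling'' bijection. A single-word necklace has representative $u^2$ with $u = x\word{\phi}$ and $\word{\phi}=\word{\phi}^R$ of length $n/2-1$; since the reversal $u^R$ then equals the rotation of $u$ by one position, the length-$(n/2)$ necklace $\necklace{u}$ is palindromic, and I would map $\necklace{u^2}$ to $\necklace{u}$. Conversely, from a length-$(n/2)$ necklace containing a word $u$ whose all-but-first-symbol suffix is a palindrome, the square $u^2$ is again of the form $(x\word{\phi}')^2$ with $\word{\phi}'$ palindromic, hence a single-word necklace. These maps are mutually inverse because every rotation of $u^2$ by $s<n/2$ is the square of the corresponding rotation of $u$, so a rotation equivalence between squares descends to one between the roots. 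The main obstacle is to pin down exactly which length-$(n/2)$ necklaces occur in the image, and this is where the parity of $n/2$ enters: a palindromic necklace satisfies $u^R = \langle u\rangle_r$ for some $r$, and a root representative whose reversal is its rotation by one exists iff $2s \equiv r-1 \pmod{n/2}$ is solvable in $s$. When $n/2$ is odd this is always solvable, so the image is \emph{all} palindromic necklaces of length $n/2$; when $n/2$ is even it is solvable iff $r$ is odd, which by Proposition \ref{prop:even_phorm} singles out exactly the necklaces possessing a word of the form $x\word{\phi}y\word{\phi}^R$ (the case $r$ even corresponding to the $\word{\phi}\word{\phi}^R$ necklaces, which are excluded).

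Finally I would transfer the order constraint through the bijection. Since the least rotation of a square is the square of the least rotation, $\langle u^2\rangle = \langle u\rangle \langle u\rangle$, so comparing $\langle u^2\rangle$ with $\word{v}$ reduces to comparing $\langle u\rangle$ with the first half $\word{v}_{[1,n/2]}$: if $\langle u\rangle > \word{v}_{[1,n/2]}$ then $\langle u^2\rangle > \word{v}$, and if $\langle u\rangle < \word{v}_{[1,n/2]}$ the reverse holds. In the tie case $\langle u\rangle = \word{v}_{[1,n/2]}$ I would use that $\word{v}$ is a necklace representative, so $\word{v}_{[1,n/2]} \le \word{v}_{[n/2+1,n]}$ and hence $\langle u\rangle\langle u\rangle = \word{v}_{[1,n/2]}\word{v}_{[1,n/2]} \le \word{v}$, placing the tie on the not-counted side, consistently with the strict inequality $\langle u\rangle > \word{v}_{[1,n/2]}$. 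Thus $N_1$ is exactly the number of image necklaces exceeding $\word{v}_{[1,n/2]}$: all odd-length palindromic necklaces above $\word{v}_{[1,n/2]}$, counted by $|\mathcal{PO}(\word{v}_{[1,n/2]})|$, when $n/2$ is odd; and the length-$(n/2)$ necklaces above $\word{v}_{[1,n/2]}$ having a word of the form $x\word{\phi}y\word{\phi}^R$, counted by $GE(\word{v}_{[1,n/2]})$, when $n/2$ is even. Substituting into $GE(\word{v}) = \tfrac12(|\mathcal{PE}(\word{v})| + N_1)$ gives the claimed formula.
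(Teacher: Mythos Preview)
Your proof is correct and follows essentially the same approach as the paper: both derive $GE(\word{v}) = \tfrac12(|\mathcal{PE}(\word{v})| + N_1)$ from the one-or-two word dichotomy of Proposition~\ref{prop:even_one_phrom}, identify the single-word necklaces as those of the form $(x\word{\phi})^2$ with $\word{\phi}=\word{\phi}^R$, and then reduce this count to length $n/2$. You supply details the paper leaves implicit---most notably the order-transfer step $\langle u^2\rangle > \word{v} \Leftrightarrow \langle u\rangle > \word{v}_{[1,n/2]}$ and the tie-breaking via $\word{v}$ being a necklace representative---and you reach the even-case image through the parity of the reflection rotation $r$ rather than the paper's more direct substitution $\word{\phi}=\word{\psi}y\word{\psi}^R$, but the underlying argument is the same.
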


\begin{proof}
It follows that the number of necklaces of the form $x \word{\phi} y \word{\phi}^R$ that are greater than $\word{v}$ equals to the number of necklaces with one word of the form $x \word{\phi} y \word{\phi}^R$, plus the number of necklaces with two words of the form $x \word{\phi} y \word{\phi}^R$.
The number of words of the form $x \word{\phi} y \word{\phi}^R$ greater than $\word{v}$ equals the size of $\mathcal{PE}(\word{v})$.
As a necklace has only one word of the form $x \word{\phi} y \word{\phi}^R$ if and only if $x \word{\phi} = y \word{\phi}^R$.
This leaves the problem of counting the number of words of the form $x \word{\phi} x \word{\phi}$ in necklaces greater than $\word{v}$.
If $\frac{n}{2}$ is odd, then $\word{\phi}$ can be rewritten as $\word{\psi} \word{\psi}^R$.
In this case, the goal becomes to fine the number of words of the form $x \word{\psi} \word{\psi}^R x \word{\psi} \word{\psi}^R$ in bracelets greater than $\word{v}$, which equals $|\mathcal{PO}(\word{v}_{[1,n/2]})|$.
On the other hand, if $\frac{n - 2}{2}$ is odd then $\word{\phi}$ can be rewritten as $\word{\psi} y \word{\psi}^R$.
In this case, the goal becomes to fine the number of words of the form $x \word{\psi} y \word{\psi}^R x \word{\psi} y \word{\psi}^R$ in bracelets greater than $\word{v}$, which equals the number of words of the form $x \word{\psi} y \word{\psi}^R$ that are bracelets greater than $\word{v}$.
This is given by $GE(\word{v}_{[1,n/2]})$.
Therefore the total number of necklaces of the form $x \word{\phi} y \word{\phi}^R$ greater than $\word{v}$ is given by:

\[
GE(\word{v}) = \frac{1}{2}\left(|\mathcal{PE}(\word{v})| + \begin{cases}
|\mathcal{PO}(\word{v}_{[1,n/2]})| & \frac{n}{2} \text{ is odd.}\\
GE(\word{v}_{[1,n/2]}) & \frac{n}{2} \text{ is even.}
\end{cases}\right)
\]
\end{proof}

\begin{lemma}
The number of necklaces greater than $\word{v}$ containing at least one word of the form $\word{\phi} \word{\phi}^R$ is given by $GS(\word{v}) = \frac{1}{2}\left(|\mathcal{PE}(\word{v})| + \begin{cases} |\mathcal{PO}(\word{v})| & \frac{n}{2} \text{ is odd.}\\
GS(\word{v}_{[1,n/2]}) & \frac{n}{2} \text{ is even.} \end{cases}\right)$.
\label{lem:PS_conversion}
\end{lemma}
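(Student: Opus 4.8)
The plan is to mirror, step for step, the argument already carried out for $GE(\word{v})$ in Lemma \ref{lem:PE_conversion}, merely replacing the form $x\word{\phi}y\word{\phi}^R$ by the form $\word{\phi}\word{\phi}^R$ everywhere. First I would split the necklaces counted by $GS(\word{v})$ according to how many words of the form $\word{\phi}\word{\phi}^R$ they contain. By Proposition \ref{prop:even_one_phrom} (Case 2), a palindromic necklace of even length carries at most two such words, and it carries exactly two precisely when $\word{\phi}\neq\word{\phi}^R$, the second word then being $\langle\word{\phi}\word{\phi}^R\rangle_{n/2}$. Since $|\mathcal{PS}(\word{v})|$ counts \emph{words} $\word{\phi}\word{\phi}^R$ whose necklace representative exceeds $\word{v}$, every two-word necklace is counted twice and every one-word necklace once. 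Writing $t$ for the number of one-word necklaces greater than $\word{v}$, the number of two-word necklaces is $\tfrac12\!\left(|\mathcal{PS}(\word{v})| - t\right)$, so the total becomes $t + \tfrac12\!\left(|\mathcal{PS}(\word{v})| - t\right) = \tfrac12\!\left(|\mathcal{PS}(\word{v})| + t\right)$, which already has the shape of the claimed formula.

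Next I would pin down $t$, the number of necklaces greater than $\word{v}$ possessing a single word of the form $\word{\phi}\word{\phi}^R$, i.e. those with $\word{\phi}=\word{\phi}^R$. For such a word, $\word{w}=\word{\phi}\word{\phi}^R=\word{\phi}\word{\phi}=\word{\phi}^2$ is a square, so its necklace is periodic and determined entirely by the length-$n/2$ palindrome $\word{\phi}$. The crucial reduction is that $\langle\word{\phi}^2\rangle=\langle\word{\phi}\rangle^2$, so that comparing the representative against the length-$n$ word $\word{v}$ reduces to comparing $\langle\word{\phi}\rangle$ against the prefix $\word{v}_{[1,n/2]}$. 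I would then branch on the parity of $n/2$, exactly as in Lemma \ref{lem:PE_conversion}. If $n/2$ is odd then $\word{\phi}$ is an odd-length word with $\word{\phi}=\word{\phi}^R$, hence an odd palindrome $\word{\alpha}z\word{\alpha}^R$; by Proposition \ref{prop:odd_phorm} these are in bijection with odd palindromic necklaces of length $n/2$ greater than $\word{v}_{[1,n/2]}$, so $t=|\mathcal{PO}(\word{v}_{[1,n/2]})|$. If $n/2$ is even then $\word{\phi}=\word{\phi}^R$ forces $\word{\phi}=\word{\psi}\word{\psi}^R$, and counting the resulting length-$n/2$ necklaces carrying a word of this form is the same problem one scale down, namely $GS(\word{v}_{[1,n/2]})$; note here that $GS$ at scale $n/2$ counts precisely the palindromic necklaces that contain a $\word{\psi}\word{\psi}^R$ word (Proposition \ref{prop:even_phorm} guarantees the relevant ones do), so the recursion is self-consistent. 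Substituting $t$ into $\tfrac12\!\left(|\mathcal{PS}(\word{v})| + t\right)$ yields the two cases of the statement.

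The step I expect to be delicate is the equivalence $\langle\word{\phi}^2\rangle > \word{v} \Longleftrightarrow \langle\word{\phi}\rangle > \word{v}_{[1,n/2]}$. When $\langle\word{\phi}\rangle$ strictly beats or loses to $\word{v}_{[1,n/2]}$ already in the first half, the equivalence is immediate; but the boundary case $\langle\word{\phi}\rangle = \word{v}_{[1,n/2]}$ forces a comparison of the second halves and must be handled carefully so that the strict inequality $\langle\word{w}\rangle > \word{v}$ defining $\mathcal{PS}$ is matched exactly. This is the analogue of the off-by-a-constant corrections appearing elsewhere in this section (such as the $-(k-\word{v}_1)$ term of Proposition \ref{prop:even_no_necklaces}), and I would discharge it using that $\word{v}$ is taken to be a necklace representative, which controls when equality of the two halves can occur. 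The remaining ingredients --- that each counted necklace has at least one word of the required form (Proposition \ref{prop:even_phorm}) and that two distinct such words force $\word{\phi}\neq\word{\phi}^R$ (Proposition \ref{prop:even_one_phrom}) --- are quoted directly, so beyond the boundary bookkeeping no further combinatorial work is needed.
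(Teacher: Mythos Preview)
Your approach is essentially the same as the paper's: split the necklaces by whether they carry one or two words of the form $\word{\phi}\word{\phi}^R$, identify the one-word case with $\word{\phi}=\word{\phi}^R$, and then recurse on the parity of $n/2$. You are in fact more careful than the paper's own proof --- you correctly write $|\mathcal{PS}(\word{v})|$ where the stated formula (apparently a typo) has $|\mathcal{PE}(\word{v})|$, you use $\word{v}_{[1,n/2]}$ rather than $\word{v}$ in the odd-$n/2$ branch, and you explicitly flag the boundary case $\langle\word{\phi}\rangle=\word{v}_{[1,n/2]}$, which the paper's proof glosses over entirely.
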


\begin{proof}
Similar to Lemma \ref{lem:PE_conversion}, this Lemma is proven in a combinatorial manner by looking at the two cases where there is only a single word of the form $\word{\phi} \word{\phi}^R$.
Recall that there is a single word of this form if and only if $\word{\phi} = \word{\phi}^R$.
Therefore, the number of necklaces with a single word of the form $\word{\phi} \word{\phi}^R$ equals the number of palindromic words of length $\frac{n}{2}$.
Hence if $\frac{n}{2}$ is even, the number of such words is $GS(\word{v}_{[1,n/2]})$.
On the other hand, if $\frac{n}{2}$ is odd, the number of such words is $|\mathcal{PO}(\word{v})|$.
Using the same arguments as in Proposition \ref{prop:even_no_necklaces}:
\[
GE(\word{v}) = \frac{1}{2}\left(|\mathcal{PE}(\word{v})| + \begin{cases} |\mathcal{PO}(\word{v})| & \frac{n}{2} \text{ is odd.}\\
GS(\word{v}_{[1,n/2]}) & \frac{n}{2} \text{ is even.} \end{cases}\right)
\]
\end{proof}

\noindent
\textbf{High Level Idea for the Even Case:}
Lemmas \ref{lem:PE_conversion} and \ref{lem:PS_conversion} show how to use the sets $\mathcal{PS}(\word{v})$ and $\mathcal{PE}(\word{v})$ to get the number of necklaces of the form $x \word{\phi} y \word{\phi}^R$ and $\word{\phi} \word{\phi}^R$ respectively.
This leaves the problem of computing the size of both sets.
This is achieved in a manner similar to the one outlined in Section \ref{subsec:odd_length_palindromic}.
At a high level the idea is to use two trees analogous to $\mathcal{TO}(\word{v})$ as defined in Section \ref{subsec:odd_length_palindromic}.
The tree $\mathcal{TE}(\word{v})$ is introduced to compute the cardinality of $\mathcal{PE}(\word{v})$ and the  tree $\mathcal{TS}(\word{v})$ is introduced to compute the cardinality of $\mathcal{PS}(\word{v})$.
As in Section \ref{subsec:odd_length_palindromic}, the trees $\mathcal{TE}(\word{v})$ and  $\mathcal{TS}(\word{v})$ contain every prefix of a word in $\mathcal{PS}(\word{v})$ or $\mathcal{PE}(\word{v})$ respectively.
The leaf vertices of these trees correspond to the words in these sets.

To compute the size of $\mathcal{PE}(\word{v})$ using $\mathcal{TE}(\word{v})$, the same approach as in Section \ref{subsec:odd_length_palindromic} is used.
A word $\word{u}$ of length less than $\frac{n}{2}$ is a prefix of some word in $\mathcal{PE}(\word{v})$ if and only if no subword of $(\word{u}_{[1,|\word{u}| - 1]})^R \word{u}$ is less than the prefix of $\word{v}$ of the same length.
This is slightly different from the odd case, where $\word{u}\in\mathcal{PE}(\word{v})$ if and only if there is no subword of $\word{u}^R \word{u}$ smaller than the prefix of $\word{v}$ of the same length.
To account for this difference the sets $\mathbf{PE}(\word{v},i,j,\word{s})$ are introduced as analogies to the sets $\mathbf{PO}(\word{v},i,j,\word{s})$.

\begin{definition}
Let $i \in [\frac{n + 1}{2}], j \in [2i]$ and $\word{s} \sqsubseteq_{2i} \word{v}$.
The set $\mathbf{PE}(\word{v},i,j,\word{s})$ contains every word $\word{u} \in \mathcal{TE}(\word{v})$ of length $i$ where (1) the longest suffix of $(\word{u}_{[1,i -1]})^R \word{u}_{[1,i]}$  which is a prefix of $\word{v}$ has a length of $j$ and (2) the word $(\word{u}_{[1,i - 1]})^R \word{u}_{[1,i]}$ is strictly bounded by $\word{s} \sqsubseteq_{2i - 1} \word{v}$.
\label{def:PE_4_params}
\end{definition}

\noindent
As in Section \ref{subsec:odd_length_palindromic}, the size of $\mathbf{PE}(\word{v},i,j,\word{s})$ is computed via dynamic programming.
The array $SizePE$ is introduced, storing the size of $\mathbf{PE}(\word{v},i,j,\word{s})$ for every value of $i \in \left[\frac{n}{2}\right], j \in [2i - 1]$ and $\word{s} \sqsubseteq_{2i - 1} \word{v}$.
Let $SizePE$ be and $n\times n \times n$ array such that $SizePE[i,j,\word{s}] = |\mathbf{PE}(\word{v},i,j,\word{s})|$.
Lemma \ref{lem:PE_Properties} shows that the techniques used in Lemma \ref{lem:SizeOfPO(i,j,s)} can be used to compute $SizePE$ in $O(k \cdot n^3 \log(n))$ time.
This is done by proving that the properties established by Lemma \ref{lem:PO_cartesian} regarding the relationship between the sets $\mathbf{PO}(\word{v},i,j,\word{s})$ also hold for the sets $\mathbf{PE}(\word{v},i,j,\word{s})$.
As words in $\mathcal{PS}(\word{v})$ are of the form $\word{\phi} \word{\phi}^R$, a word $\word{u}$ is in $\mathcal{TS}(\word{v})$ if and only if no subword of $\word{u}^R \word{u}$ is less than the prefix of $\word{v}$ of the same length.
Note that this corresponds to the same requirement as the odd case.
As such the internal vertices in the tree $\mathcal{TS}(\word{v})$ may be partitioned in the same way as those of $\mathcal{TO}(\word{v})$.
Lemma \ref{lem:PS} shows how to convert the array $SizePO$ as defined is Section \ref{subsec:odd_length_palindromic} to the size of $\mathcal{PS}(\word{v})$.

\begin{lemma}
Given $\word{u},\word{w} \in \mathbf{PE}(\word{v},i,j,\word{s})$ and $x \in \Sigma$.
If $\word{u} x \in \mathbf{PE}(\word{v},i+1,j',\word{s}')$ then $\word{v} x \in \mathbf{PE}(\word{v},i+1,j',\word{s}')$.
Further the values of $j'$ and $\word{s}'$ can be computed in constant time from the values of $j,\word{s}$ and $x$.
Therefore the array $SizePE[i,j,\word{s}]$ can be computed for every value $i \in \left[\frac{n}{2}\right], j \in [2i - 1]$ and $\word{s} \sqsubseteq_{2i - 1} \word{v}$ in $O(k \cdot n^3 \cdot \log(n))$ time.
\label{lem:PE_Properties}
\end{lemma}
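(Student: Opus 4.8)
The plan is to establish for the sets $\mathbf{PE}(\word{v},i,j,\word{s})$ exactly the same partition-refinement structure that Lemma~\ref{lem:PO_cartesian} and Lemma~\ref{lem:SizeOfPO(i,j,s)} provide for $\mathbf{PO}(\word{v},i,j,\word{s})$, and then to reuse the dynamic program almost verbatim. Concretely, I would prove three things in order: (i) a coset property stating that appending a fixed symbol $x$ sends every word of a single block $\mathbf{PE}(\word{v},i,j,\word{s})$ into a single block $\mathbf{PE}(\word{v},i+1,j',\word{s}')$; (ii) that the pair $(j',\word{s}')$ is computable in $O(1)$ from $(j,\word{s},x)$ once $XW$ and $WX$ are available; and (iii) that these two facts turn the layer-by-layer accumulation into an $O(k\cdot n^2)$ update per layer, hence $O(k\cdot n^3)$ over all $\frac{n}{2}$ layers, with the precomputation of $XW,WX$ dominating at $O(k\cdot n^3\log n)$.

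For (i), the first step is to record how the associated word $(\word{u}_{[1,i-1]})^R\word{u}_{[1,i]}$ of length $2i-1$ evolves when $\word{u}$ is extended to $\word{u}x$: appending $x$ perturbs this word only at its two ends, so the new associated word is obtained from the old one by an append of $x$ followed by a prepend governed by the reflection. The suffix parameter behaves as in the odd case: by the first lemma of Sawada and Williams~\cite{Sawada2017}, the length $j'$ of the longest suffix of the new associated word that is a prefix of $\word{v}$ equals $j+1$ when $x=\word{v}_{j+1}$ and $0$ when $x>\word{v}_{j+1}$ (with $x<\word{v}_{j+1}$ forbidden), which depends only on $j$ and $x$. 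For the bound, since $\word{s}$ strictly bounds the associated words of both $\word{u}$ and $\word{w}$, Lemma~\ref{lem:bounding_same_wx} guarantees that appending $x$ lands both in the block bounded by $WX[\word{s},x]$, and Lemmas~\ref{lem:wx_part_one} and~\ref{lem:bounding_same_xw} then guarantee that the subsequent prepend lands both in the block bounded by a common $\word{s}'$; thus $\word{w}x$ joins $\word{u}x$ in $\mathbf{PE}(\word{v},i+1,j',\word{s}')$. Point (ii) falls out of the same computation: $j'$ is read off from the comparison of $x$ with $\word{v}_{j+1}$, and $\word{s}'$ is obtained by first applying $WX$ (the append of $x$) and then $XW$ (the reflection-induced prepend), each a single table lookup.

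The part I expect to be the main obstacle is the structural difference between the even and odd forms hidden in step (i). For $\mathbf{PO}$ the associated word $\word{u}^R\word{u}$ has even length and grows symmetrically as $x\,\word{u}^R\word{u}\,x$, so both the append and the prepend are by the new symbol $x$; for $\mathbf{PE}$ the associated word has odd length $2i-1$ and reflects about a fixed centre symbol, so the two arms are offset by one position. The care needed is to verify that, despite this offset, appending $x$ still changes the associated word only at its extremities and that the induced prepend is consistent across all words of the block, so that $\word{s}'$ genuinely depends on $(\word{s},x)$ alone and not on the interior of $\word{u}$. Once this is reconciled with the bounding lemmas, the argument becomes a transcription of the odd case.

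Finally, for (iii) I would mirror Lemma~\ref{lem:SizeOfPO(i,j,s)}: initialise $SizePE$, then for each layer sweep over all $O(n^2)$ pairs $(j,\word{s})$ and all $k$ symbols $x$, adding $SizePE[i,j,\word{s}]$ into $SizePE[i+1,j',\word{s}']$, and separately account for those prefixes $\word{u}$ with $(\word{u}_{[1,i-1]})^R\word{u}_{[1,i]}\sqsubseteq\word{v}$ exactly as in the subword case there. This costs $O(k\cdot n^2)$ per layer and $O(k\cdot n^3)$ in total for the table, while Propositions~\ref{prop:complexity_XW} and~\ref{prop:complexity_WX} contribute the dominant $O(k\cdot n^3\log n)$ for precomputing $XW$ and $WX$, giving the stated bound.
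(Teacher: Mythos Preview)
Your proposal is correct and follows essentially the same approach as the paper: the paper's proof is a terse reduction, observing that since $j$ and $\word{s}$ serve identical roles for $\mathbf{PE}$ as for $\mathbf{PO}$, the arguments of Lemma~\ref{lem:PO_cartesian} and Lemma~\ref{lem:SizeOfPO(i,j,s)} carry over verbatim, the only change being that the associated word has the odd-length form $\word{\phi}^R x\word{\phi}$ (length $2i-1$) rather than $\word{\phi}^R\word{\phi}$. Your plan (i)--(iii) is exactly this reduction spelled out in detail, and the obstacle you flag about the offset of the two arms is handled implicitly in the paper by treating the centre symbol as fixed, so that the associated word grows symmetrically on both sides by the newly appended symbol, just as in the odd case.
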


\begin{proof}
Note that these are the same properties as proven in Lemma \ref{lem:PO_cartesian}.
As the arguments $j$ and $\word{s}$ serve the same function for both $\mathbf{PE}(\word{v},i,j,\word{s})$ and $\mathbf{PO}(\word{v},i,j,\word{s})$, the arguments from Lemma \ref{lem:PO_cartesian} can be applied directly to this setting.

Following the above arguments, the techniques employed in Lemma \ref{lem:SizeOfPO(i,j,s)} can be applied to computing the value of $PE[i,j,\word{s}]$ for every argument $i \in \left[\frac{n}{2}\right], j \in [2i - 1]$ and $\word{s} \sqsubseteq_{2i - 1} \word{v}$.
The only modification needed is to account for the change the form of the words in $\mathbf{PE}(\word{v},i,j,\word{s})$ versus those in $\mathbf{PO}(\word{v},i,j,\word{s})$.
As the words in $\mathbf{PE}(\word{v},i,j,\word{s})$ have the form $\word{\phi}^R x \word{\phi}$, rather than $\word{\phi}^R \word{\phi}$, the set $\mathbf{PE}(\word{v},i,j,\word{s})$ represents words of length $2i - 1$.
\end{proof}

\begin{lemma}
Let $\word{v} \in \Sigma^{n}$.
The size of $\mathcal{PE}(\word{v})$ can be computed in $O(k \cdot n^3 \cdot \log(n))$ time.
\label{lem:PE}
\end{lemma}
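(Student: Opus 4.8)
The plan is to mirror the three-step recipe used for the odd case in Section \ref{subsec:odd_length_palindromic}, replacing each ingredient by its $\mathcal{PE}$ analogue. Recall that a word of $\mathcal{PE}(\word{v})$ has the form $x \word{\phi} y \word{\phi}^R$, so its length-$(n/2)$ prefix is exactly $x \word{\phi}$ and the remaining symbols are forced once the centre symbol $y$ is chosen. Thus, just as in the odd case, computing $|\mathcal{PE}(\word{v})|$ reduces to (i) counting the prefixes $x \word{\phi}$ that survive the subword constraints, grouped by the parameters $(j,\word{s})$, and (ii) counting, for each group, how many centre symbols $y$ extend such a prefix to a full word whose least rotation exceeds $\word{v}$.

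For step (i), I would invoke Lemma \ref{lem:PE_Properties}, which already yields the array $SizePE[i,j,\word{s}] = |\mathbf{PE}(\word{v},i,j,\word{s})|$ for all $i \in \left[\frac{n}{2}\right]$, $j \in [2i-1]$ and $\word{s} \sqsubseteq_{2i-1} \word{v}$ in $O(k \cdot n^3 \cdot \log(n))$ time; this step dominates the overall cost. The sets $\mathbf{PE}\left(\word{v},\frac{n}{2},j,\word{s}\right)$ partition precisely the prefixes $x \word{\phi}$ of length $n/2$ that are compatible with $\langle x \word{\phi} y \word{\phi}^R \rangle > \word{v}$ for some completion, and by construction every word of $\mathcal{PE}(\word{v})$ arises from exactly one such prefix, so no word is double-counted.

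For step (ii), I would introduce the $\mathcal{PE}$-analogue of $\mathbf{X}(\word{v},j,\word{s})$: the set of centre symbols $y \in \Sigma$ for which $x \word{\phi} y \word{\phi}^R$ is a necklace greater than $\word{v}$, where $x \word{\phi} \in \mathbf{PE}\left(\word{v},\frac{n}{2},j,\word{s}\right)$. Exactly as in Lemma \ref{lem:Z(v,i,j)Computaiton}, the key observation is that membership of $y$ in this set depends only on $(j,\word{s})$ and not on the individual prefix: once the longest matching suffix length $j$ and the bounding subword $\word{s}$ are fixed, the comparison of the critical rotation of $x \word{\phi} y \word{\phi}^R$ against $\word{v}$ reduces to comparing $\word{v}_{[1,j]}\, y\, \word{s}$ (or the corresponding bounded word) against $\word{v}$, which can be decided in $O(n)$ time per symbol and hence in $O(k \cdot n)$ time per group. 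Summing $|\mathbf{X}(\word{v},j,\word{s})| \cdot |\mathbf{PE}(\word{v},\frac{n}{2},j,\word{s})|$ over all $O(n^2)$ pairs $(j,\word{s})$, together with the correction terms that handle the strictly-bounded edge case (when the reflected word $(\word{u}_{[1,n/2-1]})^R \word{u}$ equals the subword $\word{v}_{[j+2,n+j]}$, subtracting one as in Lemma \ref{lem:SizeOfPO}), yields $|\mathcal{PE}(\word{v})|$ in $O(k \cdot n^3)$ additional time.

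The main obstacle I anticipate is the bookkeeping in step (ii): because a $\mathcal{PE}$ word carries two distinguished symbols $x$ and $y$ rather than the single centre symbol of the odd case, I must verify carefully that the necklace-minimality test $\langle x \word{\phi} y \word{\phi}^R \rangle > \word{v}$ is fully captured by the pair $(j,\word{s})$, so that the completion count is constant across each set $\mathbf{PE}\left(\word{v},\frac{n}{2},j,\word{s}\right)$, and that the strict-bounding convention inherited from Definition \ref{def:PE_4_params} is compensated by precisely the right correction term. Once this is settled, the complexity is immediate: the $O(k \cdot n^3 \cdot \log(n))$ cost of assembling $SizePE$ dominates the $O(k \cdot n^3)$ summation, giving the claimed bound.
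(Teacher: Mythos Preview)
Your proposal is correct and follows essentially the same approach as the paper's proof: both invoke Lemma~\ref{lem:PE_Properties} to build the array $SizePE$, both introduce the $\mathcal{PE}$-analogue of $\mathbf{X}(\word{v},j,\word{s})$ to count admissible centre symbols $y$ via the comparison $\word{v}_{[1,j]}\,y\,\word{s} \geq \word{v}$ (exactly as in Lemma~\ref{lem:Z(v,i,j)Computaiton}), and both handle the boundary case where the reflected prefix is a subword of $\word{v}$ separately. The only cosmetic difference is that the paper treats the subword case by directly computing the least rotation of each candidate $\word{s}\,y$ in $O(n^2)$ time per subword, whereas you phrase it as a correction term in the style of Lemma~\ref{lem:SizeOfPO}; both yield the same $O(k\cdot n^3\cdot\log(n))$ bound.
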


\begin{proof}
Note that for every word $\word{w} \in \mathcal{PE}(\word{v})$, either $\word{w}_{[1,(n/2)]} \in \mathbf{PE}(\word{v},\frac{n}{2},j,\word{s})$ or $(\word{w}_{[2,(n/2) - 1]})^R \word{w}_{[1,(n/2) - 1]} \sqsubseteq \word{v}$.
Following the arguments in Lemmas \ref{lem:Z(v,i,j)Computaiton} and \ref{lem:SizeOfPO}, the number of words $\word{w} \in \mathcal{PE}(\word{v})$ where $\word{w}_{[1,(n/2)]} \in \mathbf{PE}(\word{v},\frac{n}{2},j,\word{s})$ for some given values of $j \in \left[n - 1\right]$ and $\word{s} \sqsubseteq_{n - 1} \word{v}$ is equal to the number of symbols $x \in \Sigma$ where $\langle  \word{w}_{[1,(n/2)]} x \word{w}_{[1,(n/2) - 1]}^R\rangle > \word{v}$.
Using the same techniques laid out in Lemma \ref{lem:Z(v,i,j)Computaiton}, the set of such symbols can be computed in $O(k \cdot n)$ time.
It follows that given the array $PE$, the number of words $\word{w} \in \mathcal{PO}(\word{v})$ where $(\word{w}_1,\word{w}_2, \hdots. \word{w}_{n/2}) \in \mathbf{PE}(\word{v},\frac{n}{2},j,\word{s})$ can be computed in $O(n^2 \cdot k)$ operations by checking every combination of $j \in \left[\frac{n}{2}\right], \word{s} \in \mathbf{S}(\word{v},n - 1)$ and $x \in \Sigma$.

Similarly if $\word{w} \in \mathbf{S}(\word{v},n - 1)$, then $\word{w} x \in \mathcal{PE}(\word{v})$ if and only if $\word{w} = \word{\phi}^R x \word{\phi}$ and $\word{w} \langle x \rangle > \word{v}$.
Each subword $\word{s} \in \mathbf{S}(\word{v}, n - 1)$ may be checked in $O(n^2)$ operations by first checking that $\word{s} = \word{s}^R$, then finding the smallest rotation of $\word{s} x$ and comparing it to $\word{v}$.
As there are $n$ words in $\mathbf{S}(\word{v},n - 1)$ and $k$ symbols in $\Sigma$, this it will take $O(n^3 \cdot k)$ operations.
Computing the arrays $PE,WX$ and $XW$ will take $O(n^3 \cdot k \cdot \log(n))$ time, hence the total complexity is $O(n^3 \cdot k \cdot \log(n))$.
\end{proof}

The size of $\mathcal{PS}(\word{v})$ is calculated in a similar manner.
As the words in $\mathcal{PS}(\word{v})$ are of the form $\word{\phi} \word{\phi}^R$, the prefixes of length $i$ correspond to subwords of length $2i$ with the form $\word{u}^R \word{u}$.
Note that these are the same as the prefixes used in Section \ref{subsec:odd_length_palindromic} for odd length palindromic necklaces.
As such, the sets $\mathbf{PO}(\word{v},i,j,\word{s})$ are used to partition internal vertices of the tree $\mathcal{TS}(\word{v})$.
Lemma \ref{lem:PS} shows how to use these sets to compute the size of $\mathcal{PS}(\word{v})$.

\begin{lemma}
Let $\word{v} \in \Sigma^n$.
The size of $\mathcal{PS}(\word{v})$ can be computed in $O(k \cdot n^3 \cdot \log(n))$ time.
\label{lem:PS}
\end{lemma}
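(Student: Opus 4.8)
The plan is to reuse the odd-case machinery of Section \ref{subsec:odd_length_palindromic} almost verbatim, exploiting two facts: a word $\word{\phi}\word{\phi}^R \in \mathcal{PS}(\word{v})$ is completely determined by its first half $\word{\phi}$ of length $n/2$, and the rotation of $\word{\phi}\word{\phi}^R$ by $n/2$ is exactly $\word{\phi}^R\word{\phi}$, so both lie in the same necklace class. First I would argue that a word $\word{u}$ of length $i \le n/2$ is a prefix of some word $\word{\phi}\word{\phi}^R$ with $\langle\word{\phi}\word{\phi}^R\rangle \ge \word{v}$ if and only if no subword of $\word{u}^R\word{u}$ is smaller than the prefix of $\word{v}$ of the same length. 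This is precisely the criterion defining membership in the tree $\mathcal{TO}(\word{v})$, so the internal and leaf vertices of $\mathcal{TS}(\word{v})$ are partitioned by the same sets $\mathbf{PO}(\word{v},i,j,\word{s})$. Consequently Lemma \ref{lem:PO_cartesian} and the recurrence of Lemma \ref{lem:SizeOfPO(i,j,s)} apply unchanged, and running that recurrence for $i=1,\dots,\tfrac{n}{2}$ produces the array $SizePO[i,j,\word{s}] = |\mathbf{PO}(\word{v},i,j,\word{s})|$ for all $j\in[2i]$ and $\word{s}\sqsubseteq_{2i}\word{v}$. Since each layer costs $O(k\cdot n^2)$ over $O(n)$ layers and the dominant cost is precomputing $WX$ and $XW$ (Propositions \ref{prop:complexity_XW} and \ref{prop:complexity_WX}), this stage runs in $O(k\cdot n^3\cdot\log(n))$ time.

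The crucial step, which is actually cleaner than the odd case, is to establish a bijection between $\mathcal{PS}(\word{v})$ and the top layer $\bigcup_{j,\word{s}}\mathbf{PO}(\word{v},\tfrac{n}{2},j,\word{s})$. Because $\word{\phi}\word{\phi}^R$ carries no central symbol, the full word is fixed once the prefix reaches length $n/2$, at which point $\word{u}^R\word{u} = \word{\phi}^R\word{\phi}$ has length $n$ and its length-$n$ subwords are exactly the rotations of $\word{v}$. I would show that $\word{\phi}\in\mathbf{PO}(\word{v},\tfrac{n}{2},j,\word{s})$ forces $\word{\phi}^R\word{\phi}$ to be strictly bounded (Definition \ref{def:bounding}) by a length-$n$ subword of $\word{v}$; since the only length-$n$ subwords of $\word{v}$ are its rotations, strict bounding rules out $\word{\phi}^R\word{\phi}$ itself being a rotation of $\word{v}$, whence $\langle\word{\phi}\word{\phi}^R\rangle\neq\word{v}$, while tree membership gives $\langle\word{\phi}\word{\phi}^R\rangle\ge\word{v}$; together these yield $\langle\word{\phi}\word{\phi}^R\rangle>\word{v}$, i.e.\ $\word{\phi}\word{\phi}^R\in\mathcal{PS}(\word{v})$. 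Conversely, any $\word{\phi}\word{\phi}^R\in\mathcal{PS}(\word{v})$ has $\langle\word{\phi}^R\word{\phi}\rangle>\word{v}$, so $\word{\phi}^R\word{\phi}$ is not a rotation of $\word{v}$ and is therefore strictly bounded by a unique $\word{s}\sqsubseteq_n\word{v}$ with no subword below $\word{v}$, placing $\word{\phi}$ in exactly one set $\mathbf{PO}(\word{v},\tfrac{n}{2},j,\word{s})$. As distinct prefixes give distinct words, this is a genuine bijection.

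The key contrast with the odd case and with $\mathcal{PE}(\word{v})$ is that no separate boundary term survives: a prefix with $\word{\phi}^R\word{\phi}\sqsubseteq\word{v}$ generates a word lying in $\word{v}$'s own necklace, satisfying $\langle\word{\phi}\word{\phi}^R\rangle=\word{v}$ rather than $>\word{v}$, so it belongs neither to the strictly-bounded $\mathbf{PO}$ sets nor to $\mathcal{PS}(\word{v})$; this is why both the $\mathbf{X}$-multiplication and the case distinction of Lemma \ref{lem:SizeOfPO} disappear. I would therefore conclude $|\mathcal{PS}(\word{v})| = \sum_{j=1}^{n}\sum_{\word{s}\sqsubseteq_n\word{v}} SizePO[\tfrac{n}{2},j,\word{s}]$, a sum over $O(n^2)$ entries computable in $O(n^2)$ time once $SizePO$ is available, so the overall cost is dominated by the computation of $SizePO$ and the claimed $O(k\cdot n^3\cdot\log(n))$ bound follows. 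I expect the main obstacle to be the bijection argument of the second paragraph, specifically verifying that strict bounding at length $n$ captures exactly the strict inequality $\langle\word{\phi}\word{\phi}^R\rangle>\word{v}$, equivalently that the prefixes excluded from the $\mathbf{PO}$ sets (those with $\word{\phi}^R\word{\phi}\sqsubseteq\word{v}$) are precisely the ones generating $\word{v}$'s own necklace and hence correctly omitted from $\mathcal{PS}(\word{v})$.
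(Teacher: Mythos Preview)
Your proposal is correct and reaches the same bound, but it takes a different route from the paper. The paper stops the $\mathbf{PO}$ recurrence at layer $\tfrac{n}{2}-1$ and then, mirroring the odd case, handles the final symbol $z$ of $\word{\phi}$ by a two-case analysis: (i) prefixes $\word{u}$ with $\word{u}^R\word{u}\sqsubseteq\word{v}$ are treated directly by checking $\word{s}=\word{s}^R$ and $\langle z\word{s}z\rangle>\word{v}$ over all $\word{s}\in\mathbf{S}(\word{v},n-2)$; (ii) prefixes in $\mathbf{PO}(\word{v},\tfrac{n}{2}-1,j,\word{s})$ are counted by testing, for each $z\in\Sigma$, whether $\word{v}_{[1,j]}\,z\,z\,\word{s}>\word{v}$. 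Your approach instead pushes the recurrence one more layer to $i=\tfrac{n}{2}$ and exploits the observation that at full length the length-$n$ subwords of $\word{v}$ are exactly its rotations, so strict bounding of $\word{\phi}^R\word{\phi}$ rules out $\langle\word{\phi}\word{\phi}^R\rangle=\word{v}$ and the leaf layer of $\mathcal{TS}(\word{v})$ coincides with $\bigcup_{j,\word{s}}\mathbf{PO}(\word{v},\tfrac{n}{2},j,\word{s})$; hence a bare double sum suffices and both the $\mathbf{X}$-style final check and the subword-of-$\word{v}$ boundary case vanish. What your route buys is uniformity and the elimination of the case split; what the paper's route buys is that it never needs the recurrence at the delicate layer where the bounding subwords have length $n$, and it reuses verbatim the same last-symbol analysis already written out for $\mathcal{PO}$ and $\mathcal{PE}$. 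Either way the dominant cost is precomputing $WX$, $XW$ and running the $\mathbf{PO}$ recurrence, so both land at $O(k\cdot n^3\cdot\log(n))$.
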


\begin{proof}
For every word $\word{w} \in \mathcal{PS}(\word{v})$ there are two cases to consider:
\begin{itemize}
    \item \textbf{Case 1:} $(\word{w}_{[1,(n/2) - 1]})^R\word{w}_{[1,(n/2) - 1]} \sqsubseteq \word{v}$.
    \item \textbf{Case 2:} There exists some set $\mathbf{PO}(\word{v}, \frac{n}{2} - 1, j, \word{s})$ such that $\word{w}_{[1,(n/2) - 1]} \in \mathbf{PS}(\word{v}, \frac{n}{2} - 1, j, \word{s})$.
\end{itemize}

The number of words in the first case can be computed by considering every subword $\word{s} \in \mathbf{S}(\word{v},n - 2)$ and $z \in \Sigma$ where $\word{s} = \word{s}^R$ and $\langle z \word{s} z \rangle > \word{v}$.
Note both of the above conditions can be checked in at most $O(n)$ operations.
If both conditions hold, then $\word{s}$ and $z$ correspond to exactly one word in $\mathcal{PS}(\word{v})$.
As there are $n$ possible values of $\word{s}$ and $k$ values of $z$ therefore the number of words in this case can be computed in $O(n^2 \cdot k)$ operations.

The number of words in the second case can be computed by considering every vale of $j \in [n - 2], \word{s} \in \mathbf{S}(\word{v},n - 2)$ and $z \in \Sigma$.
Let $\word{w}_{[1,(n/2)-1]} \in \mathbf{PS}(\word{v}, \frac{n}{2} - 1, j, \word{s})$.
The word $z (\word{w}_{[1,(n/2)-1]})^R \word{w}_{[1,(n/2)-1]} z \in \mathcal{PS}(\word{v})$ if and only if $\langle z (\word{w}_{[1,(n/2)-1]})^R \word{w}_{[1,(n/2)-1]} z\rangle > \word{v}$.
This is the case if and only if $\word{v}_{[1,j]} z z \word{s} > \word{v}$ which can be checked in $O(n)$ time.
If $\word{v}_{[1,j]} z z \word{s} > \word{v}$, then there are $PS[\frac{n}{2} - 1,j,\word{s}]$ prefixes in $\mathbf{PS}(\word{v},i,j,\word{s})$ such that $\langle z (\word{w}_{[1,(n/2)-1]})^R \word{w}_{[1,(n/2)-1]} z\rangle > \word{v}$.
As there are $n$ values of $j$ and $\word{s}$ and $k$ values of $z$ the number of words in this case can be computed in $O(n^3 \cdot k)$ operations.
Finally, in order to compute this case in $O(n^3 \cdot k)$ steps, the array $PS$ must be precomputed, requiring $O(k \cdot n^3 \cdot \log(n))$ operations.
Therefore the total complexity is $O(k \cdot n^3 \cdot \log(n))$.
\end{proof}

Combining Lemmas \ref{lem:PE} and \ref{lem:PS} with Lemmas \ref{lem:PE_conversion} and \ref{lem:PS_conversion} provides the tools to compute the rank of \word{v} among even length palindromic necklaces.
Lemma \ref{lem:complexity_even_overall} shows how to combine these values to get the rank of $\word{v}$ among even length palindromic necklaces.

\begin{lemma}
\label{lem:complexity_even_overall}
The rank of $\word{v} \in \Sigma^{n}$ among even length palindromic necklaces can be computed in $O(k \cdot n^3 \cdot \log(n)^2)$ time.
\end{lemma}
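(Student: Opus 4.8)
The plan is to obtain the sought rank as the difference between the total number of even-length palindromic necklaces and the number of such necklaces that are at least $\word{v}$. The total is already available in closed form from Proposition \ref{prop:even_no_necklaces}, so the real work is to count the even palindromic necklaces strictly greater than $\word{v}$. By Propositions \ref{prop:even_phorm} and \ref{prop:even_one_phrom} every such necklace has a representative of the form $x\word{\phi}y\word{\phi}^R$ or $\word{\phi}\word{\phi}^R$, and by Proposition \ref{prop:shared_even_form} the only necklaces admitting both forms are the constant words $x^n$. Hence, as recorded earlier in this section, the count of necklaces greater than $\word{v}$ equals $GE(\word{v}) + GS(\word{v}) - (k - \word{v}_1)$, the last term removing the doubly-counted words $x^n$ with $x > \word{v}_1$. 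Subtracting this from the total of Proposition \ref{prop:even_no_necklaces} (and, when $\word{v}$ is itself an even palindromic necklace representative, the single necklace equal to $\word{v}$) yields the rank.

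It then remains to evaluate $GE(\word{v})$ and $GS(\word{v})$ within the claimed budget. First I would invoke Lemma \ref{lem:PE_conversion} for $GE$ and Lemma \ref{lem:PS_conversion} for $GS$. Each expresses the quantity for a length-$n$ word as one half of $|\mathcal{PE}(\word{v})|$ (respectively $|\mathcal{PS}(\word{v})|$) plus a second term that is either a directly computable $|\mathcal{PO}(\cdot)|$ value when $\tfrac{n}{2}$ is odd, or the same quantity evaluated on the half-length prefix $\word{v}_{[1,n/2]}$ when $\tfrac{n}{2}$ is even. The base cardinalities $|\mathcal{PE}|$ and $|\mathcal{PS}|$ are supplied by Lemmas \ref{lem:PE} and \ref{lem:PS} in $O(k\cdot n^3\cdot\log(n))$ time, and $|\mathcal{PO}|$ by Lemma \ref{lem:SizeOfPO} within the same bound; crucially, all precomputation of the arrays $XW$ and $WX$ (Propositions \ref{prop:complexity_XW} and \ref{prop:complexity_WX}) is already absorbed into these costs.

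For the running time I would unfold the two recursions. Each recursive step halves the current length, so the recursion terminates after at most $\lfloor\log_2 n\rfloor$ steps, and at every level the dominant work is a single evaluation of $|\mathcal{PE}|$, $|\mathcal{PS}|$, or $|\mathcal{PO}|$ at cost $O(k\cdot n^3\cdot\log(n))$. Bounding each of the $O(\log(n))$ levels by this worst-case cost gives the stated $O(k\cdot n^3\cdot\log(n)^2)$ total. The hard part will be exactly this recursion bookkeeping: one must verify that running the recursions for $GE$ and $GS$ in parallel compounds to no more than an extra $\log(n)$ factor, and that the base-case branches terminate the recursion correctly. I note in passing that a sharper accounting is possible, since summing the geometric series $\sum_i k\,(n/2^i)^3\log(n/2^i)$ is dominated by its first term and already gives $O(k\cdot n^3\cdot\log(n))$; the looser $\log(n)^2$ bound in the statement follows immediately by charging the full length $n$ to each level, and is in any case comfortably within the overall $O(k\cdot n^4)$ target of Theorem \ref{thm:ranking_complexity}.
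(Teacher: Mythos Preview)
Your proposal is correct and follows essentially the same approach as the paper: derive the rank by subtracting $GE(\word{v})+GS(\word{v})-(k-\word{v}_1)$ from the closed-form total of Proposition~\ref{prop:even_no_necklaces}, evaluate $GE$ and $GS$ via the halving recursions of Lemmas~\ref{lem:PE_conversion} and~\ref{lem:PS_conversion}, and bound the $O(\log n)$ levels each by the $O(k\cdot n^3\log n)$ cost of Lemmas~\ref{lem:PE}, \ref{lem:PS}, \ref{lem:SizeOfPO}. Your additional remark that the geometric series actually sums to $O(k\cdot n^3\log n)$ is a valid sharpening not stated in the paper.
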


\begin{proof}
From Proposition \ref{prop:even_no_necklaces}, the number of even length palindromic necklaces is equal to $\frac{1}{2}\left(k^{n/2 + 1} + 2k^{n/2} + k^l\right) - k$, where $l = \frac{n + 2}{4}$ if $\frac{n}{2}$ is odd, or $l = \frac{n}{4}$ if $\frac{n}{2}$ is even.
Lemma \ref{lem:PE_conversion} provides an equation to count the number of necklaces greater than $\word{v}$ containing at least one word of the form $x \word{\phi} y \word{\phi}^R$.
The equation given by Lemma \ref{lem:PE_conversion} requires the size of $\mathcal{PE}(\word{v})$ to be computed, needing at most $O(k \cdot n^3 \cdot \log(n))$ operations, and either $|\mathcal{PE}(\word{v}_{[1,n/2]})|$ or $GE(\word{v}_{[1,n/2]})$.
As both $|\mathcal{PE}(\word{v})|$ and $|\mathcal{PO}(\word{v})|$ require $O(k \cdot n^3 \cdot \log(n))$ operations, the total complexity comes from the number of such sets that must be considered.
As the prefixes of $\word{v}$ that need to be computed is no more than $\log_2(n)$, the total complexity of computing $GE(\word{v})$ is $O(k \cdot n^3 \cdot \log^2(n))$.
Similarly as the complexity of computing $\mathcal{PS}(\word{v})$ is $O(k \cdot n^3 \cdot \log(n))$, the complexity of computing $GS(\word{v})$ is $O(k \cdot n^3 \cdot \log^2(n))$.
\end{proof}


\begin{theorem}
\label{thm:pallindromic_comlexity}
Give a word $\word{v} \in \Sigma^n$, the rank of $\word{v}$ with respect to the set of palindromic necklaces, $RP(\word{v})$, can be computed in $O(k \cdot n^3 \cdot \log^2(n))$ time.
\end{theorem}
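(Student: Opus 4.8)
The plan is to prove the theorem by a single case split on the parity of $n$, reducing $RP(\word{v})$ in each case to quantities whose running times have already been established in Sections \ref{subsec:odd_length_palindromic} and \ref{subsec:even_length_palindromic}, and then taking the dominant cost.

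First I would handle the odd case. Here $RP(\word{v})$ counts the palindromic necklaces strictly smaller than $\word{v}$, whereas the set $\mathcal{PO}(\word{v})$ counts those strictly greater than $\word{v}$, by its defining condition $\langle \word{w} \rangle > \word{v}$. By Corollary \ref{prop:num_odd_necklaces} the total number of odd-length palindromic necklaces is $k^{(n+1)/2}$, so I would write $RP(\word{v}) = k^{(n+1)/2} - |\mathcal{PO}(\word{v})| - \delta$, where $\delta \in \{0,1\}$ corrects for whether $\word{v}$ is itself a palindromic necklace and hence must be excluded from the count of necklaces strictly smaller than it. Since $\word{v}$ is assumed to be a necklace representative, the value of $\delta$ is determined by testing in $O(n)$ time whether $\necklace{v}$ is palindromic. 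The only expensive term, $|\mathcal{PO}(\word{v})|$, is computed in $O(k \cdot n^3 \cdot \log(n))$ time by Lemma \ref{lem:SizeOfPO}, so the odd case runs in $O(k \cdot n^3 \cdot \log(n))$.

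Next I would handle the even case by invoking Lemma \ref{lem:complexity_even_overall} directly, which computes the rank of $\word{v}$ among even-length palindromic necklaces in $O(k \cdot n^3 \cdot \log^2(n))$ time. Internally this combines the total count of Proposition \ref{prop:even_no_necklaces} with the counts $GE(\word{v})$ and $GS(\word{v})$ of necklaces greater than $\word{v}$ supplied by Lemmas \ref{lem:PE_conversion} and \ref{lem:PS_conversion}, using the inclusion-exclusion $GE(\word{v}) + GS(\word{v}) - (k - \word{v}_1)$ to avoid double counting necklaces recorded under both canonical forms $x\word{\phi} y \word{\phi}^R$ and $\word{\phi}\word{\phi}^R$. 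As in the odd case, the only extra bookkeeping is converting ``greater than $\word{v}$'' into the rank by subtracting from the total of Proposition \ref{prop:even_no_necklaces} and applying a constant-size correction for $\word{v}$ itself.

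Finally I would combine the two cases: $RP(\word{v})$ is computed in $O(k \cdot n^3 \cdot \log(n))$ when $n$ is odd and in $O(k \cdot n^3 \cdot \log^2(n))$ when $n$ is even, so in all cases the stated bound $O(k \cdot n^3 \cdot \log^2(n))$ holds. I expect the main obstacle to be correctness rather than asymptotics: verifying that the recursion on the prefixes $\word{v}_{[1,n/2]}$ implicit in Lemmas \ref{lem:PE_conversion} and \ref{lem:PS_conversion} terminates at depth $O(\log n)$, which is precisely what contributes the extra logarithmic factor, and confirming that the inclusion-exclusion together with the off-by-one correction for $\word{v}$ leaves no even-length necklace miscounted between the two canonical forms.
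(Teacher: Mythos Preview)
Your proposal is correct and follows essentially the same approach as the paper: split on the parity of $n$, handle the odd case by subtracting $|\mathcal{PO}(\word{v})|$ (computed via Lemma \ref{lem:SizeOfPO}) from the total of Corollary \ref{prop:num_odd_necklaces}, and defer the even case entirely to Lemma \ref{lem:complexity_even_overall}. Your treatment is in fact slightly more careful than the paper's, which omits the off-by-one correction $\delta$ and the inclusion-exclusion bookkeeping you spell out.
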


\begin{proof}
The number of odd length palindromic necklaces is given by Proposition \ref{prop:num_odd_necklaces} as $k^{(n - 1)/2}$.
Lemma \ref{lem:SizeOfPO} shows that the size of set $\mathcal{PO}(\word{v})$, corresponding to the number of odd length palindromic bracelets, can be computed in $O(k \cdot n^3 \cdot \log(n))$ time.
By subtracting the size of $\mathcal{PO}(\word{v})$ from $k^{(n - 1)/2}$, the rank of $\word{v}$ can be computed in $O(k \cdot n^3 \cdot \log(n))$ time.
Lemma \ref{lem:complexity_even_overall} shows that of $RP(\word{v})$ can be computed in $O(k \cdot n^3 \cdot \log^2(n))$ time if the length of $\word{v}$ is even.
Hence the total complexity is $O(k \cdot n^3 \cdot \log^2(n))$.
\end{proof}

\section{Enclosing Bracelets}
\label{sec:Stv}

Following Lemma \ref{lem:ranking_bracelets} and Theorem \ref{thm:pallindromic_comlexity}, the remaining problem is counting the number of enclosing words.
This section will provide a technique to count the number of necklaces enclosing some word $\word{v}$.
As in the palindromic case, the structure of these words will first be analysed so that a more efficient algorithm can be derived.

\begin{proposition}
The bracelet representation of every bracelet $\bracelet{w}$ enclosing the word $\word{v} \in \Sigma^n$ can be written as  $\word{v}_{[1,i]} x \word{\phi}$ where; $x \in \Sigma$ is a symbol that is strictly smaller than $\word{v}_{[i + 1]}$, and $\word{\phi} \in \Sigma^*$ is a word such that every rotation of $(\word{v}_{[1,i]} x \word{\phi})^R$ is greater than $\word{v}$.
\label{prop:enclose_form}
\end{proposition}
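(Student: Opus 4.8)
The plan is to unpack the definitions of the bracelet representative and of enclosing, and then read off both claimed properties almost directly. Write $\word{b} = [\bracelet{w}]$ for the bracelet representative. Since $\bracelet{w}$ encloses $\word{v}$, by definition it is the union of two necklaces $\necklace{l}$ and $\necklace{r} = \necklace{l}^R$ with $\langle \necklace{l} \rangle < \word{v} < \langle \necklace{r} \rangle$; in particular $\necklace{l} \neq \necklace{r}$, so $\bracelet{w}$ is apalindromic and the two straddling necklaces genuinely exist. Because $\word{b}$ is the lexicographically smallest word in $\necklace{l} \cup \necklace{r}$ and $\langle \necklace{l} \rangle < \langle \necklace{r} \rangle$, we have $\word{b} = \langle \necklace{l} \rangle$, and hence $\word{b} < \word{v}$.

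First I would establish the prefix structure. Both $\word{b}$ and $\word{v}$ have length $n$ and $\word{b} < \word{v}$, so by the definition of lexicographic order there is a first position at which they differ. Letting $i$ be the length of their longest common prefix, this gives $\word{b}_{[1,i]} = \word{v}_{[1,i]}$ and $\word{b}_{i+1} < \word{v}_{i+1}$ (the index $i+1 \le n$ is valid since the words differ). Setting $x = \word{b}_{i+1}$ and $\word{\phi} = \word{b}_{[i+2,n]} \in \Sigma^*$ yields $\word{b} = \word{v}_{[1,i]} x \word{\phi}$ with $x$ strictly smaller than $\word{v}_{i+1}$, which is the first assertion.

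Next I would handle the rotation condition on the reverse. Since $\word{b} \in \necklace{l}$, its reversal $\word{b}^R$ lies in $\necklace{l}^R = \necklace{r}$, so the necklace representative $\langle \necklace{r} \rangle$ equals $\langle \word{b}^R \rangle$, the lexicographically smallest rotation of $\word{b}^R$. The enclosing hypothesis gives $\word{v} < \langle \necklace{r} \rangle = \langle \word{b}^R \rangle$. As $\langle \word{b}^R \rangle$ is by definition the minimum over all rotations of $\word{b}^R$, every rotation of $\word{b}^R = (\word{v}_{[1,i]} x \word{\phi})^R$ is at least $\langle \word{b}^R \rangle$ and therefore strictly greater than $\word{v}$, which is the second assertion.

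The argument is essentially definitional bookkeeping, so I do not expect a genuinely hard step; the only points requiring care are justifying that the bracelet representative coincides with the smaller of the two necklace representatives (so that $\word{b} < \word{v}$ rather than $\word{b} > \word{v}$), and observing that an enclosing bracelet is necessarily apalindromic so that $\necklace{l}$ and $\necklace{r}$ are distinct necklaces on opposite sides of $\word{v}$. Once these are in place, both the prefix form and the condition that all rotations of the reverse exceed $\word{v}$ follow immediately from the characterisation of the necklace representative as the minimal rotation.
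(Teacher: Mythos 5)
Your proof is correct and follows essentially the same route as the paper's: both use the definition of enclosing to place the bracelet representative below $\word{v}$ and the reflected necklace's representative above it, extract the prefix form $\word{v}_{[1,i]}x\word{\phi}$ from the lexicographic comparison, and get the rotation condition from the minimality of the necklace representative of $(\word{v}_{[1,i]}x\word{\phi})^R$. The only difference is presentational — the paper wraps this in a proof by contradiction, while your direct version is cleaner and additionally justifies a step the paper glosses over (that the bracelet representative coincides with the smaller of the two necklace representatives).
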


\begin{proof}
For the sake of contradiction let $\bracelet{w}$ be a bracelet enclosing $\word{v}$ such that the bracelet representation of $\bracelet{w}$, $\word{a}$ can not be written as $\word{v}_{[1,i]} x \word{\phi}$.
Let $\word{b} = \langle \word{a}^R \rangle$.
By the definition of an enclosing necklace, $\word{a} < \word{v} < \word{b}$.
If $\word{a}_1 < \word{v}_1$, then $\word{b}_1 < \word{v}_1$.
Similarly if $\word{b}_1 > \word{v}_1$ then $\word{a}_1 > \word{v}_1$.
Hence $\word{a}_1 = \word{v}_1 = \word{b}_1$.
Therefore there exists some non zero value of $i$ such that $\word{a}_{[1,i]} = \word{v}_{[1,i]}$.

Let $i$ be the length of the longest shared prefix of $\word{v}$ and $\word{a}$, i.e. the largest value such that $\word{v}_{[1,i]} = \word{a}_{[1,i]}$.
If the symbol $\word{a}_{i + 1} > \word{v}_{i + 1}$ $\word{a} > \word{v}$ contradicting the assumption that $\word{a} < \word{v}$.
Similarly if $\word{a}_{i + 1} = \word{v}_{i + 1}$, there is a longer shared prefix.
Therefore $\word{a}_{i + 1} < \word{v}_{i + 1}$.

As this word can be written as $\word{v}_{[1,i]} x \word{\phi}$, it must be assumed that some rotation of $(\word{v}_{[1,i]} x \word{\phi})^R$ is less than or equal to $\word{v}$.
If this is the case, $\bracelet{w}$ does not enclose $\word{v}$, as both necklace classes are smaller than or equal to $\word{v}$.
Therefore the bracelet representation of every bracelet $\bracelet{w}$ enclosing the word $\word{v} \in \Sigma^n$ can be written as stated.
\end{proof}

\begin{proposition}
Given a bracelet $\bracelet{w}$ enclosing the word $\word{v} \in \Sigma^n$ of the form $\word{v}_{[1,j]} x \word{\phi}$ as given in Proposition \ref{prop:enclose_form}.
The value of $x$ must be greater than or equal to $\word{v}_{[(j + 1) \bmod l]}$ where $l$ is the length of the longest Lyndon word that is a prefix of $\word{v}_{[1,j]}$.
\end{proposition}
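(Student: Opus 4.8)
The plan is to exploit the fact that the bracelet representative $\word{a} = \word{v}_{[1,j]} x \word{\phi}$ is in particular a \emph{necklace} representative, so that every rotation of $\word{a}$ is lexicographically at least $\word{a}$, while $\word{v}$ itself is assumed to be a necklace representative as well. Writing $p = \word{v}_{[1,l]}$ for the longest Lyndon prefix of $\word{v}_{[1,j]}$, the crux will be a structural claim: that $\word{v}_{[1,j]} = p^{k} r$ for some $k \ge 1$ and some proper (possibly empty) prefix $r$ of $p$, so that the prefix is governed by the period $l$. Granting this, I would finish by rotating $\word{a}$ so that $r$ comes to the front and comparing with $\word{a}$ itself.

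First I would establish the structural claim by matching $\word{v}_{[1,j]}$ against the periodic word $p^{\infty} = p\,p\,p\cdots$. The two necessarily agree on the first $l$ positions, which are exactly $p$, so any first disagreement occurs at a position $g$ with $l < g \le j$. Let $p_{h}$, with $h \equiv g \pmod l$ and $1 \le h \le l$, be the character predicted by the period, so that $\word{v}_{g} \neq p_{h}$ while every position before $g$ matches $p^{\infty}$. If $\word{v}_{g} > p_{h}$, then $\word{v}_{[1,g]}$ is a Lyndon word strictly longer than $p$ (the standard fact that a symbol exceeding the one predicted by the current Lyndon period extends the Lyndon prefix), contradicting the maximality of $p$. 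If instead $\word{v}_{g} < p_{h}$, I would consider the rotation of $\word{v}$ beginning at the start $g - h + 1$ of the $p$-block containing position $g$: its first $h - 1$ symbols equal $p_{1}\cdots p_{h-1}$ and its $h$-th symbol is $\word{v}_{g} < p_{h}$, whereas $\word{v}$ opens with $p_{1}\cdots p_{h-1}p_{h}$, so this rotation is strictly smaller than $\word{v}$, contradicting that $\word{v}$ is a necklace representative. Both cases being impossible, $\word{v}_{[1,j]}$ agrees with $p^{\infty}$ throughout, which is exactly $\word{v}_{[1,j]} = p^{k} r$ with $r = \word{v}_{[kl+1,j]}$ a proper prefix of $p$.

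With the periodic structure available, the conclusion is short. Since $|p^{k}| = kl \le j \le n-1$, rotating $\word{a}$ by $kl$ is a genuine nonzero rotation, and $\langle \word{a} \rangle_{kl} = r\,x\,\word{\phi}\,p^{k}$, whereas $\word{a} = p^{k} r\,x\,\word{\phi}$ begins with $p^{k}$ and hence with $r$. The two words thus agree on their first $|r|$ symbols, and at position $|r|+1$ the rotation carries $x$ while $\word{a}$ carries $p_{|r|+1} = \word{v}_{(j+1) \bmod l}$ (the period symbol following $r$, with the residue read inside the block $p$ and a zero residue denoting position $l$). Because $\word{a}$ is a necklace representative, $\langle \word{a} \rangle_{kl} \ge \word{a}$, and agreement on the first $|r|$ symbols forces $x \ge p_{|r|+1} = \word{v}_{(j+1) \bmod l}$, as claimed.

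The hard part will be the structural claim $\word{v}_{[1,j]} = p^{k} r$: it is the single place where the hypothesis that $\word{v}$ is a necklace representative enters, and the argument must simultaneously rule out a symbol that is too large (which would lengthen the Lyndon prefix) and one that is too small (which would reveal a rotation of $\word{v}$ smaller than $\word{v}$). Once this is in place, the bound on $x$ is an immediate consequence of the minimality of the necklace representative $\word{a}$.
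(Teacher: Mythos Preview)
Your proof is correct and follows essentially the same approach as the paper: both establish the periodic structure $\word{v}_{[1,j]} = p^{k} r$ with $r$ a proper prefix of the Lyndon word $p = \word{v}_{[1,l]}$, and then use minimality of the necklace representative $\word{a}$ under the rotation that brings the final block $r$ to the front to force $x \geq p_{|r|+1}$.

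The only real difference is that the paper obtains the periodic structure by citing Theorem~2.1 of Cattell et al., whereas you prove it inline via the Duval dichotomy (a symbol exceeding the periodic prediction would lengthen the Lyndon prefix; a symbol falling below it would exhibit a rotation of the necklace $\word{v}$ smaller than $\word{v}$). Your version is more self-contained and makes explicit where the standing assumption that $\word{v}$ is a necklace representative is used; the paper's version is terser. Both arrive at the same rotation comparison for the final step.
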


\begin{proof}
For the sake of contradiction assume that $x <  \word{v}_{[(j + 1) \bmod l]}$.
Following Theorem 2.1 due to Cattell et. al. \cite{Cattell2000}, the subword $\word{v}_{[j - (j \bmod l), j]} = \word{v}_{[1,j \bmod l]}$.
Therefore if $x < \word{v}_{j + 1 \bmod l}$ then the subword $\word{v}_{[1,j \bmod l]} x < \word{v}_{[1,l]}$.
In this case, there is a smaller rotation of $\word{v}_{[1,j]} x \word{\phi}$, contradicting our assumption the $\word{v}_{[1,j]} x \word{\phi}$ is the smallest rotation.
Hence $x$ must be greater than or equal to $\word{v}_{j + 1 \bmod l}$.
\end{proof}

\noindent
\textbf{High Level Idea for the Enclosing Case:}
Similar to Sections \ref{subsec:odd_length_palindromic} and \ref{subsec:even_length_palindromic}, the main idea is to use the structure given in Proposition \ref{prop:enclose_form} as a basis for counting the number of enclosing bracelets.
For each value of $i$ and $x$, the number of possible values of $\word{\phi}$ are counted.
This is done in a recursive manner, working backwards from the last symbol.
For each combination of $i$ and $x$, the key properties to observe are that (1) every suffix of $\word{\phi}$ must be greater than or equal to $\word{v}_{[1,i]} x$ and (2) every rotation of $\word{\phi}^R x \word{v}_{[1,i]}^R$ is greater than $\word{v}$.

These observations are used to create a tree, $\mathcal{TEN}(\word{v},i,x)$, where each vertex represents a suffix of some possible value of $\word{\phi}$. 
Equivalently, the vertices of $\mathcal{TEN}(\word{v},i,x)$ can be thought of as representing the prefixes of $\word{\phi}^R$.
The leaf vertices of $\mathcal{TEN}(\word{v},i,x)$ represent the possible values of $\word{\phi}$.
As in Section \ref{sec:Ptv}, each layer of $\mathcal{TEN}(\word{v},i,x)$ is grouped into sets based on the lexicographical value of the reflection of the suffixes, and the prefixes of the suffixes.
Let $t \in [|\word{w}| - i]$, $j \in [t + i + 1]$ and $\word{s} \sqsubseteq_{t + i + 1} \word{v}$.
For the $t^{th}$ layer of $\mathcal{TEN}(\word{v},i,x)$, the set $\mathcal{E}(\word{v},i,x,j,\word{s})$ is introduced containing a subset of the vertices at layer $t$.
The idea is to use the values of $j$ and $\word{s}$ to divide the prefixes at layer $t$ by lexicographic value and suffix respectively.
Let $\word{u} \in \mathcal{E}(\word{v},i,x,j,\word{s})$ be a suffix of some word $\word{w}$ such that $\word{v}_{[1,i]} x \word{w}$ is a bracelet enclosing $\word{v}$.
To ensure that the necklace represented by the reflection is strictly greater than $\word{v}$, $j$ is used to track the longest prefix of $\word{u}^R$ that is a prefix of $\word{v}$.
To ensure that there is no rotation of $x \word{v}_{[1,i]}^R \word{w}^R$, the subword $\word{s} \sqsubseteq_t \word{v}$ is used to bound the value of $\word{u}^R$.
Formally, $\mathcal{E}(\word{v},i,x,j,\word{s})$ contains every suffix $\word{u} \in \mathcal{TEN}(\word{v},i,x)$ of length $i$ where (1) the longest prefix of $\word{u}^R$ that is also a prefix of $\word{v}$ and (2) the subword $\word{s} \sqsubseteq_t \word{v}$ bounds $\word{u}^R$.

As in Section \ref{sec:Ptv} the number of leaf vertices are calculated by determining the size of the sets $\mathcal{E}(\word{v},i,x,j,\word{s})$ at layer $|\word{v}| - i - 2$, and the number of children of each set.
To determine the size of the sets, two key observations must be made.
The first is that given the word $\word{u} \in \mathcal{E}(\word{v},i,x,j,\word{s})$ and the symbol $y \in \Sigma$, if $y \word{u} \in \mathcal{TEN}(\word{v},i,x)$ then there exists some pair $j' \in [n], \word{s}' \sqsubseteq_{|\word{u}| + 1} \word{v}$ such that $y \word{u} \in \mathcal{E}(\word{v},i,x,j',\word{s}')$.
Secondly, if $y \word{u} \in \mathcal{E}(\word{v},i,x,j',\word{s}')$, then $y \word{w} \in \mathcal{E}(\word{v},i,x,j',\word{s}')$ for every $\word{w} \in \mathcal{E}(\word{v},i,x,j,\word{s})$.
These observations are proven in Lemma \ref{lem:enclose_j_s_prime}, as well as showing how to determine the values of $j'$ and $\word{s}'$.

\begin{lemma}
Given $\word{u} \in \mathcal{E}(\word{v},i,x,j,\word{s})$ and symbol $y \in \Sigma$, the pair $j' \in [n], \word{s}' \sqsubseteq_{|\word{u}| + 1} \word{v}$ such that $y \word{u} \in \mathcal{E}(\word{v},i,x,j',\word{s}')$ can be computed in constant time.
Further, if $y \word{u} \in \mathcal{E}(\word{v},i,x,j',\word{s}')$, then $y \word{w} \in \mathcal{E}(\word{v},i,x,j',\word{s}')$ for every $\word{w} \in \mathcal{E}(\word{v},i,x,j,\word{s})$.
\label{lem:enclose_j_s_prime}
\end{lemma}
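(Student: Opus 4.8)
The plan is to mirror the proof of Lemma~\ref{lem:PO_cartesian}, exploiting the fact that prepending a symbol $y$ to the suffix $\word{u}$ corresponds to appending $y$ to its reflection. Since $(y\word{u})^R = \word{u}^R y$, the two quantities tracked by $\mathcal{E}(\word{v},i,x,j,\word{s})$ — the longest prefix of $\word{u}^R$ agreeing with $\word{v}$, and the subword of $\word{v}$ bounding $\word{u}^R$ — are updated by a single \emph{append-on-the-right} operation applied to $\word{u}^R$. This is exactly the operation handled by the $WX$ array of Section~\ref{subsec:bounding_subwords}, so I would reuse that machinery directly.

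First I would compute $\word{s}'$. As $\word{s}$ strictly bounds $\word{u}^R$, Lemma~\ref{lem:bounding_same_wx} guarantees that the subword of $\word{v}$ bounding $\word{u}^R y$ is determined by $\word{s}$ and $y$ alone; it is precisely $\word{s}' = WX[\word{s},y]$, retrievable in constant time from the precomputed array. Then I would compute $j'$. Because $j$ is the length of the longest common prefix of $\word{u}^R$ and $\word{v}$, appending $y$ can extend the match only when $\word{u}^R$ already equals $\word{v}_{[1,j]}$ in full, i.e. when $j = |\word{u}^R|$. Invoking Lemma~1 of Sawada and Williams~\cite{Sawada2017}, I would set $j' = j+1$ when $j = |\word{u}^R|$ and $y = \word{v}_{j+1}$, and $j' = j$ otherwise; this is a constant-time function of $j$, $|\word{u}^R|$ and $y$. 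Both updates therefore depend only on $(j,\word{s},y)$ and on the common length of the words in the current layer, not on the particular word $\word{u}$.

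For the second (Cartesian) claim, let $\word{u},\word{w}\in\mathcal{E}(\word{v},i,x,j,\word{s})$, so that $\word{u}$ and $\word{w}$ share their length, their prefix-match $j$, and their bounding subword $\word{s}$. The verification that $\word{w}^R y$ has prefix-match $j'$ follows from the $j$-update rule above, and the verification that $\word{s}'$ bounds $\word{w}^R y$ follows from Lemma~\ref{lem:bounding_same_wx} applied with $\word{s}$ strictly bounding both $\word{u}^R$ and $\word{w}^R$. It then remains only to argue that $y\word{w}$ is a vertex of $\mathcal{TEN}(\word{v},i,x)$ whenever $y\word{u}$ is.

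I expect this last point to be the main obstacle. Membership in $\mathcal{TEN}(\word{v},i,x)$ encodes the two constraints of Proposition~\ref{prop:enclose_form}: no rotation of the reflection may fall below $\word{v}$, and the representative $\word{v}_{[1,i]}x\word{\phi}$ must remain the smallest rotation of its own necklace. The first constraint is governed exactly by the bounding subword and the prefix match: by Lemmas~\ref{lem:bounding_same_xw} and~\ref{lem:bounding_same_wx}, a newly created reflection-subword can be smaller than $\word{v}$ only if $\word{s}$ together with $y$ witnesses it, while a prefix-side violation is detected only through $j$ and the comparison $y < \word{v}_{j+1}$; both are functions of $(j,\word{s},y)$. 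I would then show that the necklace-representative constraint likewise factors through $(j,\word{s})$, so that the admissibility of the symbol $y$ is identical for $\word{u}$ and $\word{w}$. Consequently $y\word{u}\in\mathcal{TEN}(\word{v},i,x)$ forces $y\word{w}\in\mathcal{TEN}(\word{v},i,x)$, and both extensions land in the same set $\mathcal{E}(\word{v},i,x,j',\word{s}')$, completing the argument.
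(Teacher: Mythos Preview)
Your approach mirrors the paper's: both reduce to the machinery of Lemma~\ref{lem:PO_cartesian}, use the precomputed arrays of Section~\ref{subsec:bounding_subwords} for the bounding update, and invoke Lemma~\ref{lem:bounding_same_wx}/\ref{lem:bounding_same_xw} together with Lemma~1 of Sawada--Williams for the uniformity claim. There are, however, two concrete technical divergences worth noting. First, the paper computes $\word{s}'$ via $XW$, whereas you use $WX$; since $(y\word{u})^R = \word{u}^R y$ is an append on the right, your choice of $WX$ is the one consistent with the construction, and the paper's $XW$ appears to be a slip. Second, the paper's proof sets $j' = j+1$ if $y = \word{v}_{j+1}$ and $j' = 0$ otherwise, treating $j$ as the length of the longest \emph{suffix} of $\word{u}^R$ that is a prefix of $\word{v}$ (the same invariant used in $\mathbf{PO}$), while you set $j' \in \{j,j+1\}$ because you read the definition of $\mathcal{E}$ literally as tracking the longest common \emph{prefix} of $\word{u}^R$ and $\word{v}$. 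The paper's own definition of $\mathcal{E}$ says ``prefix'' but its proof and its later use of $j$ only make sense under the ``suffix'' reading; so your update rule is faithful to the stated definition, but not to what the algorithm actually requires. If you adopt the suffix-that-is-a-prefix interpretation, your argument goes through unchanged with the $\{0,j+1\}$ rule, and your treatment of the $\mathcal{TEN}$-membership side (which the paper's proof leaves implicit) is in fact more careful than the original.
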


\begin{proof}
Assume that the array $XW$ given in Section \ref{subsec:bounding_subwords} has been precomputed.
Following the same arguments as presented in Lemma \ref{lem:PO_cartesian}, the value of $j'$ is either $j + 1$, if $y = \word{v}_{j + 1}$, or $0$ otherwise.
Similarly, the value of $\word{s}'$ is equal to the value given by $XW[\word{s},\word{w}_{1}]$.
Note that if $y \word{s}' < \word{v}$ then there is no such value of $\word{s}'$.
Similarly if $y < \word{v}_{j + 1}$ then there no value of $j'$.
To show that $y \word{w} \in \mathcal{E}(\word{v},i,x,j',\word{s}')$ for every $\word{w} \in \mathcal{E}(\word{v},i,x,j,\word{s})$, recall from Lemma \ref{lem:bounding_same_xw} that if $\word{s}'$ bounds $y \word{s}$, then $\word{s}'$ bounds $y \word{w}$ for every $\word{w}$ bounded by $\word{s}$.
Similarly, if $j'$ is the length longest suffix of $\word{u}^R y$ that is a prefix of $\word{v}$, $j'$ must also be the length of the longest suffix of $\word{w}^R y$ that is a prefix of $\word{v}$.
\end{proof}

From Lemma \ref{lem:enclose_j_s_prime}, the size of $\mathcal{E}(\word{v},i,x,j,\word{s})$ are computed using the sizes of $\mathcal{E}(\word{v},i,x,j',\word{s}')$ for $j' \in [0,n]$ and $\word{s}' \in \mathbf{S}(\word{v},|\word{s}| + 1)$.
To compute the value of $\mathcal{E}(\word{v},i,x,j,\word{s})$, an array $SE$ of size $k \times n \times n \times n^2$ is introduced such that the value of $SE[x,i,j,\word{s}] = |\mathcal{E}(\word{v},i,x,j,\word{s})|$.

\begin{lemma}
Let $\word{v} \in \Sigma^n$.
Let $SE$ be a $n \times n^2$ array such that $SE[x,i,j,\word{s}] = |\mathcal{E}(\word{v},i,x,j,\word{s})|$ for $j \in [0,n]$ and $\word{s} \sqsubseteq \word{v}$.
Every value of $SE[x,i,j,\word{s}]$ is computed in $O(k^2\cdot n^4)$ time.
\label{lem:SO_computation}
\end{lemma}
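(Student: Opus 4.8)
The plan is to compute the entire array $SE$ by dynamic programming over the trees $\mathcal{TEN}(\word{v},i,x)$, processing each pair $(x,i) \in \Sigma \times [n]$ independently and, for a fixed pair, filling the entries layer by layer in exactly the manner of Lemma \ref{lem:SizeOfPO(i,j,s)}. For a fixed $(x,i)$, the vertices of $\mathcal{TEN}(\word{v},i,x)$ at a given layer are partitioned into the sets $\mathcal{E}(\word{v},i,x,j,\word{s})$, and the size of each such set is recorded in $SE[x,i,j,\word{s}]$. Since at a fixed layer the bounding subword $\word{s}$ has a fixed length, there are only $O(n)$ admissible values of $\word{s}$ and $O(n)$ admissible values of $j$, so each layer carries $O(n^2)$ nonempty sets.

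First I would initialise the base layer, placing each admissible single-symbol suffix $y$ into the set $\mathcal{E}(\word{v},i,x,j,\word{s})$ determined by whether $y = \word{v}_1$ (which fixes $j$) and by the length-one subword of $\word{v}$ that bounds $y$ (which fixes $\word{s}$), discarding any $y$ forbidden by the minimality requirement of Proposition \ref{prop:enclose_form}. The recursive step moves from one layer to the next by prepending a symbol. By Lemma \ref{lem:enclose_j_s_prime}, for each source set $\mathcal{E}(\word{v},i,x,j,\word{s})$ and each symbol $y \in \Sigma$ the target parameters $(j',\word{s}')$ are computed in constant time from the precomputed array $XW$, and every element of the source set maps uniformly into $\mathcal{E}(\word{v},i,x,j',\word{s}')$; hence $SE[x,i,j',\word{s}']$ of the next layer is incremented by $SE[x,i,j,\word{s}]$. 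Exactly as in Lemma \ref{lem:SizeOfPO(i,j,s)}, the boundary suffixes $\word{u}$ whose reflection $\word{u}^R$ is itself a subword of $\word{v}$ are not captured by the strictly-bounded transition and must be added separately: for each subword $\word{s}$ of the current length and each symbol $y$ one checks in $O(n)$ time whether prepending $y$ yields an admissible boundary suffix and, if so, increments the appropriate next-layer entry by one.

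For the complexity, observe that for a fixed pair $(x,i)$ each layer transition processes the $O(n^2)$ source sets against $k$ symbols at constant cost each, plus the $O(n^2 \cdot k)$ boundary bookkeeping, giving $O(k \cdot n^2)$ work per layer; summing over the $O(n)$ layers yields $O(k \cdot n^3)$ per pair $(x,i)$. There are $k \cdot n$ such pairs, so the total is $O(k^2 \cdot n^4)$. The one-time precomputation of $XW$ via Proposition \ref{prop:complexity_XW} costs $O(k \cdot n^3 \cdot \log(n))$ and is dominated by this bound.

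The main obstacle is the correct handling of the boundary words together with the base case. The uniform transition of Lemma \ref{lem:enclose_j_s_prime} applies only to strictly-bounded suffixes, so the suffixes whose reflection coincides with a subword of $\word{v}$ are precisely those for which prepending a symbol may send them to a target set different from the bulk of $\mathcal{E}(\word{v},i,x,j,\word{s})$, and these must be enumerated and reinserted by hand. The delicate part is to verify that this separate accounting neither double-counts nor omits any vertex while simultaneously respecting both constraints identified in the high-level discussion, namely that no rotation of $\word{v}_{[1,i]} x \word{\phi}$ undercuts it and that every rotation of the reflection strictly exceeds $\word{v}$. Once this bookkeeping is shown to be sound, the size of $\mathcal{E}(\word{v},i,x,j,\word{s})$ at every layer follows immediately from the recurrence, and the claimed running time is obtained by the counting above.
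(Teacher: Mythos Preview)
Your proposal is essentially the paper's own argument: layer-by-layer dynamic programming over $\mathcal{TEN}(\word{v},i,x)$, with the constant-time transition supplied by Lemma~\ref{lem:enclose_j_s_prime}, and the same $O(k^2\cdot n^4)$ accounting obtained by iterating over $i,x,j,\word{s}$, the layer index, and the prepended symbol.

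The one place you diverge is the separate bookkeeping for ``boundary'' suffixes, i.e.\ those $\word{u}$ with $\word{u}^R \sqsubseteq \word{v}$, which you carry over from Lemma~\ref{lem:SizeOfPO(i,j,s)}. The paper does not do this in the enclosing case, and it need not: in the palindromic case a step extends on \emph{both} ends, so $WX$ is required, and Lemma~\ref{lem:bounding_same_wx} only guarantees uniformity under \emph{strict} bounding, forcing the subword cases to be reinserted by hand. Here a step only \emph{prepends}, so only $XW$ is needed, and Lemma~\ref{lem:bounding_same_xw} already gives the uniform transition for ordinary bounding. Thus the obstacle you flag is not present; your extra pass is harmless provided you keep the definition of $\mathcal{E}$ consistent (either use ordinary bounding and drop the separate pass, or switch to strict bounding and keep it), but it is not required for correctness or for the stated running time.
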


\begin{proof}
Initially the value of $SE[j,\word{s}]$ is set to 0.
Observe that every word $\word{w} \in \mathcal{E}(\word{v},i,x,j,\word{s})$ where $|\word{w}| > 1$ can be written as $z \word{w}'$ for $\word{w}' \in \mathcal{E}(\word{v},i,x,j',\word{s}')$.
From Lemma \ref{lem:enclose_j_s_prime}, the value of $j'$ and $\word{s}'$ can be calculated in constant time.
Therefore to efficiently compute the values of $SE$, it is reasonable to start by computing the size of $\mathcal{E}(\word{v},i,x,j,\word{s})$ for every $i \in [0,n], x \in \Sigma, j \in [0,n]$ and $\word{s} \in \mathbf{S}(\word{v},n-1)$.
Given $i \in [0,n], x \in \Sigma, j \in [0,n]$ and $\word{s} \in \mathbf{S}(\word{v},n-1)$, the size of $\mathcal{E}(\word{v},i,x,j,\word{s})$ is computed directly by checking each value of $z \in \Sigma$.
If $z \geq (\word{v}_{[1,i]} x)_{j + 1 \bmod i + 1}$ and $x \word{s} > \word{v}$ then the value of $SE[i,x,j,\word{s}]$ is incremented by 1, otherwise it remains the same.

Once the value of $SE[i,x,j,\word{s}]$ has been computed for every value of $i \in [1,n], x \in \Sigma, j \in [0,n]$ and $\word{s} \in \mathbf{S}(\word{v},n-1)$, the next step is to compute the value of $SE[i',x',j',\word{s}']$ for every $i' \in [1,n], x' \in \Sigma, j' \in [0,n]$ and $\word{s}' \in \mathbf{S}(\word{v},n-2)$.
This is done by looking at each value of $i \in [1,n], x \in \Sigma, j \in [0,n],\word{s} \in \mathbf{S}(\word{v},n-1)$ and $z \in \Sigma$ and determining the values of $j'$ and $\word{s}'$ for which $z \word{w} \in \mathcal{E}(\word{v},i,x,j',\word{s}')$ where $\word{w} \in \mathcal{E}(\word{v},i,x,j,\word{s})$ following Lemma \ref{lem:enclose_j_s_prime}.
Once the value of $j'$ and $\word{s}'$ has been determined, $SE[x,i,j',\word{s}']$ is increased by $SE[x,i,j,\word{s}]$.
By repeating this for every value of $i \in [1,n], x \in \Sigma, j \in [0,n],\word{s} \in \mathbf{S}(\word{v},n-1)$ and $z \in \Sigma$ leaves the value of $SE[x,i,j',\word{s}']$ as the size of $\mathcal{E}(\word{v},i,x,j',\word{s}')$.

Let $t \in [1,n - 1]$.
Once every value of $SE[x,i,j,\word{s}]$ for every value of $i \in [1,n], x \in \Sigma, j \in [0,n],$ and $\word{s} \in \mathbf{S}(\word{v},t)$, the value of $SE[x',i',j',\word{s}']$ is computed for every $i \in [1,n], x \in \Sigma, j \in [0,n],
\word{s} \in \mathbf{S}(\word{v},t - 1)$.
This is done by determining the value of $j'$ and $\word{s}'$ for each combination of $i \in [1,n], x \in \Sigma, j \in [0,n],\word{s} \in \mathbf{S}(\word{v},t)$ and $z \in \Sigma$ following Lemma \ref{lem:enclose_j_s_prime}.
Once the value of $j'$ and $\word{s}'$ has been determined, $SE[x,i,j',\word{s}']$ is increased by $SE[x,i,j,\word{s}]$.
By repeating this for every value of $i \in [1,n], x \in \Sigma, j \in [0,n],\word{s} \in \mathbf{S}(\word{v},t)$ and $z \in \Sigma$ leaves the value of $SE[x,i,j',\word{s}']$ as the size of $\mathcal{E}(\word{v},i,x,j',\word{s}')$.

Repeating this for every value of $t$ from $n - 1$ to $1$ will completely compute the array $SE$.
In order to compute this array, observe that for each of the $O(n)$ values of $t$, there are $O(n)$ values of $i,j$ and $\word{s}$ to check alongside $O(k)$ values of $x$ and $z$.
As each combination only needs to be checked once, and the process of determining $j'$ and $\word{s}'$ can be done in constant time, the total complexity is $O(n^4 \cdot k^2)$.
\end{proof}

Once $SE$ has been computed, the number of enclosing words can be computed using $SE$ and each valid combination of $i$ and $x$.
This is done in a direct manner.
Note that the number of possible values of $\word{\phi}$ such that $\word{v}_{[1,i]} x \word{\phi}$ represents a bracelet enclosing $\word{v}$ is equal to $SE[x,i,j,\word{s}]$ where $j$ is the longest suffix of $\word{v}_{[2,i]} x$ that is a prefix of $\word{v}$ and $\word{s}$ is the subword that bounds $x \word{v}_{[1,i]}^R$.
As both values can be computed naively in $O(n^2)$ operations, the complexity of this problem comes predominately from computing $SE$.

\begin{theorem}
The number of bracelets enclosing $\word{v} \in \Sigma^n$ can be computed in $O(n^4 \cdot k^2)$.
\label{thm:enclosing_complexity}
\end{theorem}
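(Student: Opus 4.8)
The plan is to combine the structural characterisation of Proposition \ref{prop:enclose_form} with the precomputed table $SE$ supplied by Lemma \ref{lem:SO_computation}. By Proposition \ref{prop:enclose_form}, every bracelet enclosing $\word{v}$ has a unique representative of the form $\word{v}_{[1,i]} x \word{\phi}$, where $x < \word{v}_{i+1}$ and every rotation of $(\word{v}_{[1,i]} x \word{\phi})^R$ exceeds $\word{v}$. Since the bracelet representative is the lexicographically smallest word of its class, each enclosing bracelet corresponds to exactly one such triple $(i,x,\word{\phi})$; moreover $i$ is forced to be the length of the longest common prefix of the representative with $\word{v}$, so distinct pairs $(i,x)$ induce distinct prefixes and no bracelet is counted twice. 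Hence the number of enclosing bracelets is obtained by summing, over all admissible pairs $(i,x)$, the number of completions $\word{\phi}$ that keep $\word{v}_{[1,i]} x \word{\phi}$ a valid enclosing representative.

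First I would compute the array $SE$ via Lemma \ref{lem:SO_computation}, at a cost of $O(k^2 \cdot n^4)$. The set $\mathcal{E}(\word{v},i,x,j,\word{s})$ was defined precisely so that its leaf vertices enumerate the valid suffixes completing $\word{\phi}$; consequently the number of admissible $\word{\phi}$ for a fixed pair $(i,x)$ equals $SE[x,i,j,\word{s}]$, where $j$ is the length of the longest suffix of $\word{v}_{[2,i]} x$ that is a prefix of $\word{v}$, and $\word{s} \sqsubseteq \word{v}$ is the subword bounding $x \word{v}_{[1,i]}^R$. These two initial parameters encode, respectively, the requirement that the reflected necklace stay above $\word{v}$ and the requirement that no rotation of the reflection drop below $\word{v}$. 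For each pair $(i,x)$ with $x < \word{v}_{i+1}$ (respecting the lower bound on $x$ established above) I would compute $j$ and $\word{s}$ directly, each naively in $O(n^2)$ time by scanning for the longest matching prefix and by locating the bounding subword, then look up $SE[x,i,j,\word{s}]$ and add it to a running total.

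For the complexity there are $O(n)$ choices of $i$ and $O(k)$ choices of $x$, giving $O(n \cdot k)$ pairs, each needing $O(n^2)$ work to fix $(j,\word{s})$ followed by a constant-time table lookup; the summation therefore costs $O(n^3 \cdot k)$, which is dominated by the $O(k^2 \cdot n^4)$ cost of building $SE$, yielding the claimed bound. The main obstacle is not the accounting but justifying that the single lookup $SE[x,i,j,\word{s}]$ faithfully captures every valid $\word{\phi}$: one must verify that the initial pair $(j,\word{s})$ correctly records the influence of the fixed prefix $\word{v}_{[1,i]} x$ on the suffix and rotation constraints, so that the recursion of Lemma \ref{lem:enclose_j_s_prime} then propagates these constraints down to the leaves without losing or spuriously admitting completions. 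Once that correspondence is confirmed, the theorem follows immediately.
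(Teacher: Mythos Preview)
Your proposal is correct and follows essentially the same approach as the paper: precompute $SE$ via Lemma~\ref{lem:SO_computation}, then for each admissible pair $(i,x)$ determine the initial parameters $(j,\word{s})$ from the fixed prefix $\word{v}_{[1,i]}x$ and read off the number of valid completions $\word{\phi}$ as $SE[x,i,j,\word{s}]$, summing over all pairs. The paper is only slightly more explicit than you are about the admissibility conditions on $(i,x)$---namely the Lyndon lower bound $x\geq \word{v}_{(i+1)\bmod l}$ and the check that $x\,\word{v}_{[1,i]}^R$ is not already below $\word{v}_{[1,i+1]}$---but these are exactly the ``lower bound on $x$'' you allude to, and your complexity accounting matches.
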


\begin{proof}
From Lemma \ref{lem:SO_computation} the array $SE$ may be computed in $O(n^4 \cdot k^2)$ operations.
Using $SE$, let $i \in [1,n]$ and $x \in \Sigma$.
Further let $l$ be the length of the longest Lyndon word that is a prefix of $\word{v}_{[1,i]}$.
If the value of $x$ is less than $\word{v}_{i + 1 \bmod l}$ or greater than or equal to $\word{v}_{i + 1}$ then there is no bracelet represented by $\word{v}_{[1,i]} x \word{\phi}$.
Similarly if $x \word{v}_{[1,i]}^R < \word{v}_{[1,i + 1]}$, then any bracelet of the form $\word{v}_{1,i} x \word{\phi}$ does not enclose $\word{v}$.
Otherwise, the number of enclosing bracelets represented by $\word{v}_{[1,i]} x \word{\phi}$ is equal to $SE[x,i,j,\word{s}']$ where $j$ is the longest suffix of $\word{v}_{[2,i]} x$ that is a prefix of $\word{v}$ and $\word{s}$ is the subword that bounds $x \word{v}_{[1,i]}^R$.
By summing the value of $SE[x,i,j,\word{s}']$ for each value of $i \in [1,n]$ and $x \in \Sigma$ such that $\word{v}_{[1,i]} x$ is the prefix of the representation of some bracelet enclosing $\word{v}$ gives the number of enclosing bracelets.
Therefore $RE(\word{v}) = \sum\limits_{i \in [1,n - 1]} \sum\limits_{x \in \Sigma} \begin{cases}
0 & x \word{v}_{[1,i]}^R < \word{v}\\
0 & x \leq \word{v}_{i + 1 \bmod l} \text{ or } x > \word{v}_{i + 1}\\
SE[x,i,j,\word{s}'] & Otherwise.
\end{cases}$
\end{proof}

\noindent
\textbf{Proof of Theorem \ref{thm:ranking_complexity}}.
The tools are now available to prove Theorem \ref{thm:ranking_complexity} and show that it is possible to rank a word $\word{v} \in \Sigma^n$ with respect to the set of bracelets of length $n$ over the alphabet $\Sigma$ in $O(k^2 \cdot n^4)$ steps.
To rank bracelets, it is sufficient to use the results of ranking $\word{v}$ with respect to necklaces, palindromic necklaces and bracelets enclosing $\word{v}$, combining them as shown in Lemma \ref{lem:ranking_bracelets}.
Sawada et. al. provided an algorithm to rank $v$ with respect to necklaces in $O(n^2)$ time.
It follows from Theorem \ref{thm:pallindromic_comlexity} that the rank with respect to palindromic necklaces can be computed in $O(k \cdot n^3)$ time.
Theorem \ref{thm:enclosing_complexity} shows that the rank with respect to bracelets enclosing $v$ can be computed in $O(k^2 \cdot n^4)$ time.
As combining these results can be done in $O(1)$ steps, therefore the overall complexity is $O(k^2 \cdot n^4)$.

\section{Conclusions and Future Work}
\label{sec:conclusions}

In this work we have presented an algorithm for the ranking of bracelets in $O(k\cdot n^4)$ time.
This expands upon the previous work on ranking necklaces and Lyndon words in $O(n^2)$ time.
Along side ranking bracelets, this work provides methods to rank palindromic necklaces in $O(k\cdot n^3)$ time, and enclosing bracelets in $O(k\cdot n^4)$ time.
There are two obvious questions to expand this work in.
The first question is if there exists a faster algorithm for ranking bracelets, which may be achieved by finding a faster algorithm to count the number of enclosing bracelets and palindromic necklaces.
The second question is if these techniques may be extend to the fixed density or fixed content cases.

The authors would like to thank the reviewers of the short version of this paper for help comments.
The authors thank the Leverhulme Trust for funding this research via the Leverhulme Research Centre for Functional Materials Design.  

\bibliography{bib.bib}
\bibliographystyle{plain}

\end{document}